\DeclareMathOperator{\dv}{div}
\newcommand{\tor}{\mathbb{T}^d}
\newcommand{\T}{\mathbb{T}^d}
\newcommand{\jQsum}{\sum_{j=1}^{2d}\sum_{Q\in\mathcal{Q}}}
\newcommand{\W}{W^{Q,j}}
\DeclareMathOperator{\sign}{sign}
\DeclareMathOperator{\supp}{supp}
\theoremstyle{plain}
\newtheorem{thm}{Theorem}[section]
\newtheorem{lemma}[thm]{Lemma}
\newtheorem{prop}[thm]{Proposition}
\newtheorem{conj}[thm]{Conjecture}
\theoremstyle{definition}
\newtheorem{rem}[thm]{Remark}
\def\R{\mathbb{R}}
\def\N{\mathbb{N}}
\def\Z{\mathbb{Z}}
\def\Lip{\mathrm{Lip}}
\def\T{\mathbb{T}}
\def\div{\operatorname{div}}
\def\supp{\mathrm{supp\, }}
\def\e{\varepsilon}
\def\sign{\mathrm{sign}}
\title{On the failure of the chain rule \\ for the divergence of Sobolev vector fields}
\author[1]{Miriam Buck}
\affil[1]{Technische Universit\"at Darmstadt, Fachbereich Mathematik, D-64285 Darmstadt, Germany}
\author[1]{Stefano Modena}
\date{}                     %% if you don't need date to appear
\begin{document}
\maketitle
%\tableofcontents

\abstract{We construct  a large class of incompressible vector fields with Sobolev regularity, in dimension $d \geq 3$, for which the \emph{chain rule problem} has a negative answer. In particular, 
for any renormalization map $\beta$ (satisfying suitable assumptions) and any (distributional) renormalization defect  $T$ of the form $T = \div h$, where $h$ is an $L^1$ vector field, we can construct an incompressible  Sobolev vector field $u \in W^{1, \tilde p}$ and a density $\rho \in L^p$ for which $\div (\rho u) =0$ but $\div (\beta(\rho) u) = T$, provided $1/p + 1/\tilde p \geq 1 + 1/(d-1)$}.

\section{Introduction}

This note deals with the problem of the \emph{chain rule} for the divergence of vector fields, which can be stated as follows (see {\cite{ambrosio2007chain, crippa2014failure}}):

\smallskip

\textbf{Q1}: \noindent \emph{Given a periodic vector field $u : \T^d \to \R^d$ and a periodic scalar function $\rho: \T^d \to \R$, assuming that $\div u = 0$ and $\div (\rho u) = 0$ in the sense of distributions, can one deduce that also $\div (\beta(\rho) u) = 0$ in the sense of distributions as well, for every given smooth $\beta: \R \to \R$?} 

\smallskip

or, in a more general setting, 

\smallskip

\textbf{Q2}: \emph{Given $u, \rho$ as before, is it possible to express the quantity $\div (\beta(\rho) u)$, where $\beta : \R \to \R$ is a smooth given function, only in terms of the quantities $\div u$ and $\div (\rho u)$?}

\smallskip

Here $\T^d = \R^d/\Z^d$ is the $d$-dimensional flat torus. Similar questions can be asked for more general domains and boundary conditions, but we prefer to stick to the period case for the sake of simplicity. Clearly, if $\rho, u$ are smooth, it holds
\begin{equation}
\label{eq:chain-rule}
\div (\beta(\rho) u) = \beta'(\rho) \div (\rho u) + \left[ \beta(\rho) - \rho \beta'(\rho) \right] \div u, 
\end{equation}
and thus the answer to both questions above is positive. 

If, on the contrary, $\rho, u$ are not smooth, the answer is far from trivial, and it is intimately connected to the problem of uniqueness of solutions to the Cauchy problem associated to the linear transport equation
\begin{equation}
\label{eq:transport}
\partial_t \rho + \nabla \rho \cdot u = 0.
\end{equation}
Given a vector field $u$, a distributional solution $\rho$ to \eqref{eq:transport} is called \emph{renormalized} if $\beta(\rho)$ is still a distributional solution to \eqref{eq:transport} for every $C^1$ map $\beta$, with bounded derivative (such $\beta$'s are called \emph{renormalization maps}). The vector field $u$ has the \emph{renormalization property} if every solution $\rho$ to \eqref{eq:transport} (in a given class of densities) is renormalized. It is a well known fact dating back to the 1989 work of DiPerna and Lions {\cite{diperna1989ordinary}} that, if $u$ has the renormalization property, then solutions to the Cauchy problem associated to \eqref{eq:transport} are unique (in the corresponding class of densities). 

The chain rule question Q1 is then a sort of ``stationary version'' of the renormalization property and it is thus natural to expect that vector fields for which uniqueness of solutions to the Cauchy problem associated to \eqref{eq:transport} holds are exactly those vector fields for which the chain rule (in the sense of Q1) holds.

Indeed, DiPerna and Lions in the mentioned paper {\cite{diperna1989ordinary}} proved uniqueness of solutions for the Cauchy problem associated to \eqref{eq:transport} in the case of (time dependent) Sobolev vector  fields $u \in L^1 ([0,T]; W^{1,p'} (\T^d))$ with bounded divergence in the class of densities $\rho \in L^\infty ([0,T], L^p(\T^d))$, for every given $p \in [1,\infty]$, and the same proof can be used to show that, if $u \in W^{1,p'}(\T^d)$ and $\rho \in L^p(\T^d)$, then the answer to Q1 (and in some sense also to Q2) is positive. DiPerna and Lions' uniqueness result for the transport equation \eqref{eq:transport} was then extended by Ambrosio in 2004 to the case of incompressible vector fields $u \in L^1 ([0,T], BV(\T^d))$  with bounded divergence, thus similarly yielding a positive answer to Q1 and Q2 for stationary vector fields $u \in BV(\T^d)$ with bounded divergence. In both results crucial ingredients are the bound on one full derivative of the vector field ($\nabla u \in L^{p'}$ in the case of DiPerna and Lions, $\nabla u$ in the space of measures in Ambrosio's case) and the control on $\div u$, i.e., roughly speaking, on the rate of compression of the ``flow'' generated by $u$. 

Ambrosio's result does not cover the case of $BV$ vector fields whose divergence (which, in general, is a Radon measure) admits also jump and/or Cantor parts. This case was then analyzed in 2007 by Ambrosio, De Lellis and Malý in {\cite{ambrosio2007chain}}, where they gave a detailed answer to Q1 and  Q2 for \emph{nearly incompressible} vector fields $u \in BV(\T^d)$ whose divergence has no Cantor part, and a partial answer to the case when $\div u$ has nonzero Cantor part. The notion of \emph{nearly incompressibility} is a natural replacement of the bounded divergence condition in DiPerna-Lions' and Ambrosio's theory, and ensures that the product $\rho \div u$ (see \eqref{eq:chain-rule}) between the almost everywhere defined map $\rho$ and the possibly singular measure $\div u$ 
is well defined in the sense of of distributions. Here we do not want to enter into more details and we refer for a definition to {\cite[Def. 7]{ambrosio2007chain}}. More recently (2016) the case of two dimensional  \emph{nearly incompressible} $BV$ vector fields (with divergence possibly having also Cantor part) was accurately discussed by Bianchini and Gusev in \cite{bianchini2016steady}, building on the description of level sets for Lipschitz continuous functions developed in {\cite{alberti2013structure}}. A comprehensive analysis of the chain rule for the divergence of $BV$ nearly incompressible vector fields in $\R^d$, for any $d \geq 2$, was finally performed by Bianchini and Bonicatto in 2020 \cite{bianchini2020uniqueness}.

In these notes we are interested in negative answers to questions Q1 and Q2. More precisely, we ask whether, for any \emph{given} renormalization function $\beta$ and any \emph{given} distribution $T$ (usually called \emph{renormalization defect}), it is possible to construct a density $\rho$ and a vector field $u$ (with the best possible integrability and regularity) such that 
\begin{equation}
\label{eq:div-u-rhou}
\div u = \div (\rho u) = 0,
\end{equation}
but 
\begin{equation}
\label{eq:div-defect}
\div (\beta(\rho) u)  = T
\end{equation}
in the sense of distributions. Notice that this problem is more general than just asking whether there are $\rho, u$ for which question Q1 admits a negative answer. Indeed, a negative answer to Q1 would just correspond to finding $\rho,u$ for which \eqref{eq:div-u-rhou} holds, but the l.h.s. in \eqref{eq:div-defect} is nonzero for some $\beta$, and this could be  done  by adapting example of nonuniqueness for the transport equation \eqref{eq:transport} (see e.g. \cite{tsuruhashi2021}). On the contrary, we want to \emph{prescribe} a priori the renormalization function $\beta$ and, more importantly, the renormalization defect $T$ in \eqref{eq:div-defect}, and to find $\rho,u$ for which the l.h.s. in \eqref{eq:div-defect} is exactly given by $T$. 

To the best of our knowledge the only result in this direction is contained in {\cite{crippa2014failure}} where the authors use a convex integration approach based on the theory of laminates to show that for every strongly convex $\beta$ (see {\cite[Sec.2]{crippa2014failure}} for the definition of strongly convex map) and for every distribution $T$ for which the equation $\div w = T$ admits a bounded \emph{continuous} solution, there are \emph{bounded} $\rho$ and $u$ such that \eqref{eq:div-u-rhou} and \eqref{eq:div-defect} hold simultaneously. Notice that here no bound on the derivative of $u$ is available, in particular $u$ is neither Sobolev nor $BV$, but it is merely $L^\infty$. On the other side, the constructed densities $\rho$ are not just in \emph{some} $L^p_{loc}$ space, but they are bounded. 

There has recently been progresses concerning non-uniqueness results for the transport equation \eqref{eq:transport} in the case of Sobolev vector fields, and the state of the art can be roughly summarized in the following theorem (see also {\cite{cheskidov2021nonuniqueness}}, {\cite{giri2021non}, \cite{pitcho2021almost}} for similar statements).

\begin{thm}[Non-uniqueness for the transport equation, {\cite{modena2018non}, \cite{modena2019non}, \cite{modena2020convex}, \cite{brue2021positive}}]
\label{thm:transport}
Let $p,\tilde p \in [1, \infty)$. Assume that 
\begin{equation}
\label{eq:1-over-d}
\frac{1}{p} + \frac{1}{\tilde p} > 1 + \frac{1}{d}.
\end{equation}
Then there is a density $\rho \in C([0,1], L^p(\T^d))$ and an incompressible vector field $u \in C([0,1], L^{p'}(\T^d)) \cap C([0,1], W^{1, \tilde p}(\T^d))$ such that \eqref{eq:transport} holds and $\rho|_{t=0} \equiv 1$ but $\rho|_{t = 1} \not \equiv 1$. Moreover, if $p>1$, the density $\rho$  is strictly positive. 
\end{thm}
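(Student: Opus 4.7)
The plan is to establish Theorem 1.1 by a convex integration scheme built on highly oscillatory, concentrated \emph{Mikado densities} and \emph{Mikado flows}. The idea is to produce $\rho$ and $u$ as limits of an iterated sequence $(\rho_q, u_q, R_q)_{q\in\N}$ solving the continuity--defect equation
\begin{equation*}
\partial_t \rho_q + \div(\rho_q u_q) = -\div R_q, \qquad \div u_q = 0,
\end{equation*}
with inductive estimates $\|\rho_q\|_{C_tL^p} \leq M$, $\|u_q\|_{C_tL^{p'}} + \|u_q\|_{C_tW^{1,\tilde p}} \leq M$, and $\|R_q\|_{C_tL^1} \leq \delta_{q+1}$ for some geometrically decaying sequence $\delta_q \to 0$. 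The construction would start from a smooth $\bar\rho$ interpolating between $\bar\rho(0,\cdot) \equiv 1$ and a chosen $\bar\rho(1,\cdot) \not\equiv 1$, with corresponding defect $R_0$. All subsequent perturbations would be cut off sharply in time near $t=0,1$, so the prescribed boundary data persist along the iteration.

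The heart of the scheme is the choice of building blocks. I would use a finite family $\{(\Theta_k, W_k)\}_{k\in\Lambda}$ of smooth, mean-zero periodic functions on $\Td$, indexed by directions $k \in \Lambda \subset \Z^d$, with $\div W_k = 0$, both supported in disjoint thin tubes of cross-section $1/\mu$ around lines parallel to $k$, and normalized so that $\fint_\Td \Theta_k W_k = k$. After rescaling $x \mapsto \lambda x$ (with $\lambda \gg \mu \gg 1$), the crucial norms scale as
\begin{equation*}
\|\Theta_k(\lambda \cdot)\|_{L^p} \sim \mu^{(d-1)(1-1/p)}, \qquad \|W_k(\lambda \cdot)\|_{L^{p'}} \sim \mu^{(d-1)(1-1/p')},
\end{equation*}
and $\|\nabla W_k(\lambda \cdot)\|_{L^{\tilde p}} \sim \lambda \mu^{(d-1)(1-1/\tilde p)}$. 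The perturbations would be taken of the form $\theta_{q+1} = \sum_k a_k(t,x)\Theta_k(\lambda x)$ and $w_{q+1} = \sum_k a_k(t,x) W_k(\lambda x)$, with amplitudes $a_k$ chosen via a geometric lemma for vectors so that the low-frequency part of $\theta_{q+1} w_{q+1}$ cancels $R_q$ to leading order. The residual error $R_{q+1}$ splits into a high-frequency oscillation error, a transport/commutator error coming from $\partial_t \theta_{q+1} + \div(\theta_{q+1} u_q)$, and a flow-correction error; each is expressed via an inverse-divergence operator $\adiv$, picking up a factor $1/\lambda$ that absorbs the loss $\mu^{(d-1)}$ coming from concentration.

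The expected main obstacle---and the source of the hypothesis \eqref{eq:1-over-d}---is the balance of these scalings. Cancelling a unit-size $R_q$ forces $\|\theta_{q+1}\|_{L^p}\,\|w_{q+1}\|_{L^{p'}} \gtrsim 1$, which by the above scalings means one can afford only $\mu^{(d-1)(2 - 1/p - 1/p')} \lesssim 1$, while the Sobolev cost demands $\lambda \mu^{(d-1)(1 - 1/\tilde p)} \lesssim 1$ up to the gain from $\adiv$. Eliminating $\lambda$ and $\mu$ under the requirement that the oscillation error $R_{q+1}$ be strictly smaller than $R_q$ yields, after optimization, precisely $1/p + 1/\tilde p > 1 + 1/d$. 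The delicate point is the \emph{asymmetric} allocation of integrability ($L^p$ on $\rho$, $W^{1,\tilde p}$ on $u$): the density is only concentrated while the velocity is both concentrated and differentiated, so the two exponents enter on equal footing in the final inequality.

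Summing the geometrically small perturbations yields $\rho_q \to \rho \in C_tL^p$ and $u_q \to u \in C_tL^{p'}\cap C_tW^{1,\tilde p}$ with $R_q \to 0$ in $C_tL^1$, so the limit solves \eqref{eq:transport} with the prescribed boundary values $\rho(0,\cdot)\equiv 1$, $\rho(1,\cdot)\not\equiv 1$. For the strict-positivity statement when $p>1$, I would modify the ansatz to $\rho_{q+1} = \rho_q + \rho_q \,\tilde\theta_{q+1}$ with $\|\tilde\theta_{q+1}\|_{L^\infty}\to 0$ summably, so that $\rho_q$ remains bounded below by a positive constant uniformly in $q$; this requires multiplying the Mikado density by the running $\rho_q$ and using the improved interpolation available once $p>1$ to bound the ensuing extra commutator terms.
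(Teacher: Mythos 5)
This theorem is not proved in the present paper: it is quoted as background from \cite{modena2018non,modena2019non,modena2020convex,brue2021positive}, and the paper uses it only as motivation for its own (different) result on the stationary chain-rule problem, so there is no internal proof to compare against. Measured against the cited references, your high-level strategy (convex integration on the continuity--defect system, concentrated Mikado densities/fields, inverse divergence, geometric decay of $R_q$) is indeed the right one, but the quantitative heart of your sketch contains a gap that would make it fail to reach the stated exponent.

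The most serious issue is that the building blocks you describe concentrate only on spatial tubes of cross-section $\mu^{-(d-1)}$; with that geometry the Sobolev cost $\|Dw_{q+1}\|_{L^{\tilde p}}\sim\lambda\mu^{1+(d-1)/p'-(d-1)/\tilde p}$ forces $(d-1)(1/p+1/\tilde p -1)>1$, i.e.\ the \emph{stationary} threshold $1/p+1/\tilde p>1+1/(d-1)$, not $1+1/d$. To reach $1/d$ one must exploit the extra time variable by using building blocks that also concentrate (or oscillate intermittently) in $t$ --- this is precisely the point emphasized in Remark~\ref{rem:comments-statement}(4) of the paper, and it is the content of \cite{modena2020convex,brue2021positive}; your sketch never uses time as a concentration direction. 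There are also two internal inconsistencies in the bookkeeping: (i) with the normalization $\fint_{\T^d}\Theta_k W_k=k$ on tubes of cross-section $\mu^{-(d-1)}$ one has $\|\Theta_k\|_{L^p}\sim 1$ and $\|W_k\|_{L^{p'}}\sim 1$ (as in Proposition~\ref{prop:mikadosgeneral}), not $\mu^{(d-1)(1-1/p)}$ and $\mu^{(d-1)(1-1/p')}$ --- with your scalings $\fint\Theta_kW_k$ would be of order $\mu^{d-1}$, not $O(1)$; and (ii) the balance you write, $\mu^{(d-1)(2-1/p-1/p')}\lesssim 1$, is the vacuous $\mu^{d-1}\lesssim 1$ since $1/p+1/p'=1$. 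Relatedly, taking the \emph{same} amplitude $a_k$ in front of $\Theta_k$ and $W_k$ does not work for general $p$: the correct ansatz (used in \cite{modena2018non} and mirrored in \eqref{eq:perturbations-2} of this paper) distributes the size of $R_q$ asymmetrically as $|R_q|^{1/p}$ on the density perturbation and $|R_q|^{1/p'}$ on the velocity perturbation, so that both stay bounded in their respective $L^p$/$L^{p'}$ norms while their product reconstructs $R_q$ at leading order.
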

It is expected that the last sentence (positive density) holds also in the case $p=1$, though a proof of this fact is not available and it is probably far from trivial. Motivated by Theorem \ref{thm:transport} and by the strong analogy between the transport equation and the chain rule problem, it is natural to conjecture the following. 

\begin{conj}
\label{conj:main}
Let $p,\tilde p \in [1, \infty)$ and assume that \eqref{eq:1-over-d} holds. Let $\beta : \R \to \R$ be given and $T \in \mathcal{D}'(\T^d)$ be a given distribution (both satisfying some suitable assumption, see Points \eqref{pt:assu-beta}-\eqref{pt:assu-t} in Remark \ref{rem:comments-statement} below). Then there is a density $\rho \in L^p(\T^p)$ and an incompressible vector field $u \in L^{p'}(\T^d) \cap W^{1, \tilde p}(\T^d)$ such that 
\eqref{eq:div-u-rhou} and \eqref{eq:div-defect} hold simultaneously. Moreover, (in certain cases) the density can be chosen to be strictly positive. 
\end{conj}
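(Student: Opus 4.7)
The plan is to prove Conjecture \ref{conj:main} via a convex integration (Nash-type) iteration, modeled on the Modena--Sz\'ekelyhidi scheme for non-uniqueness of the transport equation and adapted to the stationary divergence-form setting of questions Q1--Q2. Writing the prescribed defect as $T = \div h$ with $h \in L^1(\Td)$ (the natural integrability assumption on $T$), I would construct a smooth sequence $(\rho_n, u_n)$ together with Reynolds-type defect fields $m_n, R_n$ satisfying
\begin{equation*}
\div u_n = 0, \quad \div(\rho_n u_n) = \div m_n, \quad \div(\beta(\rho_n) u_n) - T = \div R_n,
\end{equation*}
and then drive $\|m_n\|_{L^1} + \|R_n\|_{L^1} \to 0$ while $(\rho_n, u_n)$ converges in $L^p \times (L^{p'} \cap W^{1, \tilde p})$. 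The iteration is initialized with $\rho_0 \equiv c$ a constant, $u_0 \equiv 0$, $m_0 = 0$, $R_0 = -h$, so that all the work is to reduce the defects while gaining no more than summable norm at each stage.

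For the building blocks I would use Mikado-type density/velocity pairs $(\Theta^\nu, U^\nu)$ concentrated on thin tubes parallel to $\nu \in S^{d-1}$, with oscillation parameter $\lambda$ and concentration parameter $\mu$, arranged so that $U^\nu$ is divergence-free and parallel to $\nu$, and so that the averaging identities
\begin{equation*}
\int_{\Td} \Theta^\nu U^\nu \, dx = 0, \qquad \int_{\Td} \bigl[\beta(c + \Theta^\nu) - \beta(c)\bigr] U^\nu \, dx = \nu
\end{equation*}
hold. The first identity allows the leading contribution to $\div(\rho u)$ to sit in high-frequency oscillations (shrinking $m_n$); the second identity, which is the algebraic heart of the construction, exploits the nonlinearity of $\beta$ (strong convexity as in \cite{crippa2014failure}, or at least non-affineness) to produce a non-vanishing mean of $\beta(\rho)u$ that can be steered against $-R_n$. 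Summation over a suitable finite set of directions, via a geometric-lemma-type algebraic decomposition of $-R_n$ as $\sum_\nu a_\nu^2\, \nu$, leads to the perturbation ansatz $\rho_{n+1} - \rho_n = \sum_\nu a_\nu(x) \Theta^\nu(\lambda_n x)$ and $u_{n+1} - u_n = \sum_\nu a_\nu(x) U^\nu(\lambda_n x) + (\text{divergence-free correctors})$, with the correctors produced by suitable anti-divergence operators. Standard oscillation/stationary-phase estimates then yield new Reynolds defects small in $L^1$, provided $\lambda_n, \mu_n$ are tuned so that the $L^p$-norm of the density increment, the $L^{p'}$-norm of the velocity increment, and the $W^{1,\tilde p}$-norm remain summable.

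The principal obstacle will be producing the prescribed, generally non-small defect $T$ while simultaneously enforcing (to leading order) two constraints $\div u = 0$ and $\div(\rho u) = 0$. Unlike the time-dependent non-uniqueness scheme of Theorem \ref{thm:transport}, which gains one dimension from the time variable and reaches the threshold $1 + 1/d$ of \eqref{eq:1-over-d}, the stationary problem has no temporal direction to exploit, and a direct spatial Mikado ansatz yields only the weaker threshold $1 + 1/(d-1)$ recorded in the abstract; closing the gap to $1 + 1/d$ would likely require a genuinely more refined anisotropic concentration, and I expect this to be the main technical obstruction to the full conjecture. A secondary subtlety is the merely $L^1$ regularity of $h$, which forbids absorbing it as a smooth perturbation and instead requires resolving it progressively across the scales $\lambda_n$ during the iteration. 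Finally, in the positivity regime $p>1$, the amplitudes must be chosen so that $c + \sum_\nu a_\nu \Theta^\nu > 0$ uniformly in $n$, which forces the use of nonnegative Mikado density profiles in the spirit of \cite{brue2021positive}.
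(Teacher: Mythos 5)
The statement you are asked to prove is a \emph{conjecture} which the paper itself leaves open: the paper only establishes the weaker Theorem~\ref{thm:main} at the threshold $\tfrac1p+\tfrac1{\tilde p}>1+\tfrac1{d-1}$, and you correctly identify the missing temporal direction as the likely obstruction to reaching $1+\tfrac1d$. However, even restricted to what the paper does prove, your scheme contains a genuine gap in the treatment of the nonlinearity of $\beta$. You fix building blocks $(\Theta^\nu,U^\nu)$ once and for all, impose the mean-value identity $\int\bigl[\beta(c+\Theta^\nu)-\beta(c)\bigr]U^\nu\,dx=\nu$, and then modulate by a pointwise amplitude: $\rho_{n+1}-\rho_n=\sum_\nu a_\nu(x)\Theta^\nu(\lambda_n x)$. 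The resulting leading-order contribution to $\div(\beta(\rho_{n+1})u_{n+1})$ is governed by $\int\beta\bigl(c+a_\nu\Theta^\nu\bigr)\,a_\nu U^\nu\,dx$, which for a nonlinear $\beta$ is \emph{not} $a_\nu^2\nu$ and in fact has no prescribed value unless $a_\nu\equiv 1$. Since the amplitudes $a_\nu$ must shrink as the defect $R_n$ shrinks, the averaging identity you designed is never actually in force in the regime where you need it. This is precisely the difficulty the paper highlights in the introduction ($\beta(\rho_q+\nu_{q+1})\neq\beta(\rho_q)+a(R_q)\beta(\Theta_{q+1})$) and resolves by making the building blocks \emph{amplitude dependent}: Proposition~\ref{prop:mikadosgeneral} constructs, for each $a$, a pair $(\Theta_a,W_a)$ satisfying $\int\beta(|a|^{1/p}\Theta_a)W_a\,dx\approx\sigma|a|^{1/p}e_k$, and the perturbation then uses the block $\Theta^{Q,j}$ associated to the (approximately constant) amplitude $g_j^Q$ on each small cube $Q$, via the cutoffs $\psi_Q,\chi_Q$, so that the nonlinearity never has to commute with a variable prefactor.

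A secondary structural difference worth flagging: you keep two separate vector defects $m_n,R_n$, arranging $\int\Theta^\nu U^\nu=0$ so that the density perturbation does not feed the $\rho u$ constraint and you only actively cancel $R_n$. The paper instead bundles both constraints into a single $2\times d$ matrix defect and uses the algebraic decomposition of Lemma~\ref{lemma:decomposition}, choosing $\int\Theta^{Q,j}W^{Q,j}\approx z_j\neq 0$, so that the \emph{same} building block simultaneously cancels both rows; the relative signs $(-1)^{j+1}$ absorbed into the Mikado parameter $\sigma$ (and the sign-changing density profile $\varphi$, see Remark~\ref{rem:sign-changing-density}) are what make this possible. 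Your split is not a priori wrong, but without a mechanism to cancel the first-row error accumulated by the quadratic/cutoff/linear remainders it is not clear how $m_n$ would actually remain summable, and in any case the amplitude-vs-$\beta$ commutation gap above is the more fundamental issue to repair.
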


These notes are a first contribution towards the proof of Conjecture  \ref{conj:main}. We indeed prove the following theorem. 

\begin{thm}\label{thm:main}
Let $d\geq 3$ and let $\beta:\mathbb{R}\rightarrow\mathbb{R}$ be a smooth, globally Lipschitz continuous function with the property that there exist constants $C,D>0$ such that
\begin{equation}\label{grwothcondition}
D|\tau|\leq \beta(\tau)\leq C|\tau| \text{ for } |\tau|\gg 1.
\end{equation}
Let $T \in \mathcal{D}'(\T^d)$ be such that $T=\dv h$ for some $h\in L^1(\tor; \R^d).$ Let $p\in (1,\infty),\tilde{p}\in\lbrack 1,\infty)$ such that
\begin{equation}
\label{eq:d-1}
\frac{1}{p}+ \frac{1}{\tilde{p}}>1+\frac{1}{d-1}.
\end{equation}
Then there exist $\rho:\tor\rightarrow\mathbb{R}, u:\tor\rightarrow\mathbb{R}^d$ with $\rho\in L^p(\tor),u\in W^{1,\tilde{p}}(\tor)\cap L^{p'}(\tor)$ such that \eqref{eq:div-u-rhou} and \eqref{eq:div-defect} hold. 
\end{thm}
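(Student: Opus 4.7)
The plan is to build $(\rho, u)$ by a convex integration iteration in the spirit of the Modena--Sz\'ekelyhidi scheme underlying Theorem \ref{thm:transport}, with two crucial modifications: the Mikado building blocks must be \emph{signed} so that the ``$\rho u$ mean'' cancels while the ``$\beta(\rho) u$ mean'' does not, and the vector field $h$ (rather than a Reynolds-type stress) is carried as the running error.

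\textbf{Iteration scheme.} I would construct smooth $\rho_q, u_q$ and an $L^1$ vector field $h_q$, starting from $(0,0,h)$, such that at every stage
\[
\dv u_q = 0, \qquad \dv(\rho_q u_q)=0, \qquad \dv(\beta(\rho_q)u_q)=\dv(h-h_q).
\]
The target is $\|h_q\|_{L^1}\to 0$ together with summability of $\|\rho_{q+1}-\rho_q\|_{L^p}$ and $\|u_{q+1}-u_q\|_{W^{1,\tilde p}\cap L^{p'}}$; the limit yields the theorem. The structural novelty compared to the transport case is that $\dv(\rho_q u_q)=0$ is enforced \emph{exactly} at every step, not up to a vector error, and this constraint dictates the zero-mean design below.

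\textbf{Signed Mikado blocks.} For each direction index $j=1,\dots,2d$ (aligned with $\pm e_1,\dots,\pm e_d$) and each cube $Q\in\mathcal Q$ of a small-scale partition of $\T^d$, I would construct a Mikado pair $(\Theta^{Q,j},\W)$ with parameters $\lambda$ (oscillation) and $\mu$ (concentration), supported in disjoint thin tubes parallel to $\xi_j$, with $\dv \W = 0$ and $\W\cdot\nabla\Theta^{Q,j}=0$. The key property is the signed structure: $\Theta^{Q,j}$ alternates $\pm M$ with $M\sim\mu^{(d-1)/2}$ on neighbouring tubes while $\W$ is nonnegative along $\xi_j$, so that
\[
\int_{\T^d}\Theta^{Q,j}\,\W\, dx=0,\qquad \int_{\T^d}\beta(\Theta^{Q,j})\,\W\, dx=\gamma^{Q,j}\xi_j,
\]
with $\gamma^{Q,j}\sim M$. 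The lower bound $\beta(\tau)\geq D|\tau|$ is precisely what makes $\gamma^{Q,j}>0$ (since $\beta(M)$ and $\beta(-M)$ both contribute positively while $\pm M$ cancels), while the upper bound $\beta(\tau)\leq C|\tau|$ gives $\|\beta(\Theta^{Q,j})\|_{L^r}\lesssim\|\Theta^{Q,j}\|_{L^r}\sim\mu^{(d-1)(1/2-1/r)}$.

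\textbf{Perturbation step.} Decompose $h_q$ via a $2d$-cone partition of its range and a spatial partition subordinate to $\mathcal Q$, obtaining $h_q\approx\sum_{j,Q}a^{Q,j}_q\,\xi_j\,\chi_Q$ with $a^{Q,j}_q\geq 0$ and $\sum|a^{Q,j}_q|\sim\|h_q\|_{L^1}$. Set
\[
\theta_{q+1}=\jQsum c^{Q,j}_q\Theta^{Q,j},\qquad w_{q+1}=\jQsum c^{Q,j}_q\W,
\]
and $\rho_{q+1}=\rho_q+\theta_{q+1}$, $u_{q+1}=u_q+w_{q+1}+w^{\mathrm{corr}}_{q+1}$, with $c^{Q,j}_q\sim (a^{Q,j}_q/\gamma^{Q,j})^{1/2}$ tuned so that the principal interaction $\int\beta(\theta_{q+1})w_{q+1}\sim -h_q$ up to a high-frequency divergence absorbed into $h_{q+1}$ via the anti-divergence operator $\adiv$ and made small by taking $\lambda\to\infty$. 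The corrector $w^{\mathrm{corr}}_{q+1}$ restores $\dv(\rho_{q+1}u_{q+1})=0$ \emph{exactly} by inverting the divergence of the low-frequency remainder of $\theta_{q+1}u_q+\rho_q w_{q+1}$ in $L^{p'}$, which is where the hypothesis $p>1$ enters (so that Calder\'on--Zygmund estimates for $\adiv$ are available).

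\textbf{Main obstacle and parameter balance.} The hardest point is the nonlinearity $\beta(\rho_q+\theta_{q+1})$: unlike the transport case, where $\rho\mapsto\rho$ is linear and $\theta_{q+1}$ is a small perturbation, here $\theta_{q+1}$ has amplitude $M\to\infty$ and cannot be linearised in $\theta_{q+1}$. I would resolve this by using that on the support of $\theta_{q+1}$ one has $|\theta_{q+1}|\sim c^{Q,j}_q M\gg\|\rho_q\|_\infty$ once $\mu$ is large enough and, since distinct Mikado supports are disjoint, $\beta$ decouples block by block; this reduces the principal computation to the single-block means $\gamma^{Q,j}$ modulo a Lipschitz error controlled by $\|\rho_q\|_\infty\|w_{q+1}\|_{L^1}$. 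Collecting the scalings $\|\theta_{q+1}\|_{L^p}, \|w_{q+1}\|_{L^{p'}}, \|\nabla w_{q+1}\|_{L^{\tilde p}}\sim\lambda\mu^{(d-1)(1/2-1/\tilde p)}$ and demanding simultaneous smallness with $\lambda\gg\mu\gg 1$, together with $\|h_{q+1}\|_{L^1}\leq\tfrac12\|h_q\|_{L^1}$, produces exactly the constraint $1/p+1/\tilde p>1+1/(d-1)$. The loss of one dimension relative to Theorem \ref{thm:transport} (the $d-1$ in place of $d$) is the price paid for the $\pm$ cancellation that keeps $\dv(\rho_q u_q)=0$ identically throughout the iteration.
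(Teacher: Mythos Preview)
Your scheme has a structural gap in the claim that $\dv(\rho_q u_q)=0$ can be maintained \emph{exactly} at every step. The corrector argument you sketch does not close: to enforce $\dv(\rho_{q+1}u_{q+1})=0$ one would need $\dv\big((\rho_q+\theta_{q+1})w_c\big)$ to cancel $\dv(\theta_{q+1}w_{q+1}+\theta_{q+1}u_q+\rho_q w_{q+1})$, which is not a linear equation you can solve for $w_c$ with usable estimates (you would effectively be dividing by $\rho_q+\theta_{q+1}$, which by your own design changes sign). If instead you relax the constraint and merely track the error $E_q$ with $\dv(\rho_q u_q)=\dv E_q$, then your Mikados with $\int\Theta\, W=0$ provide \emph{no} cancellation mechanism for this equation: the quadratic term $\theta_{q+1}w_{q+1}$ has mean zero and produces no useful low-frequency contribution, so the errors simply pile up as $E_{q+1}=E_q+(\text{small})$ and never return to zero. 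The limit will satisfy $\dv(\rho u)=\dv E_\infty$ with no reason for $E_\infty$ to vanish.

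The paper resolves this by treating \eqref{eq:div-u-rhou} and \eqref{eq:div-defect} as a coupled system with a single $2\times d$ matrix error $R_q$ (Proposition~\ref{prop:main} and Lemma~\ref{lemma:decomposition}), and designing Mikados so that $\int\Theta\, W\approx z_j$ is \emph{nonzero} (to cancel the first row $R^1_q$) while simultaneously $\int\beta(|a|^{1/p}\Theta)\, W\approx\sigma|a|^{1/p}z_j$ for a prescribed sign $\sigma=(-1)^{j+1}$ (to cancel the second row $R^2_q$). Meeting both conditions at once is a $2\times 2$ linear system \eqref{eq:alpha} for the amplitudes of the Mikado field, and its uniform invertibility is exactly where the two-sided bound on $\beta$ and the sign change in $\Theta$ enter (Remark~\ref{rem:sign-changing-density}); the sign change is thus not there to make $\int\Theta\, W$ vanish, but to make that system well-conditioned. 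Two further points: your symmetric amplitude $M\sim\mu^{(d-1)/2}$ only gives bounded $\|\theta\|_{L^p}$ and $\|w\|_{L^{p'}}$ when $p=2$; for general $p$ one must scale asymmetrically, $\Theta\sim\mu^{(d-1)/p}$ and $W\sim\mu^{(d-1)/p'}$, as in Proposition~\ref{prop:mikadosgeneral}. And the appearance of $d-1$ rather than $d$ in \eqref{eq:d-1} is not a consequence of the signed cancellation you describe; it reflects the stationarity of the problem (the Mikados concentrate in $d-1$ transverse directions on $\T^d$, with no time variable available as an extra effective dimension, cf.\ Remark~\ref{rem:comments-statement}(4)).
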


\begin{rem}
\label{rem:comments-statement}
We add some comments about the assumptions in Theorem \ref{thm:main} and their differences w.r.t. Conjecture \ref{conj:main}.

\begin{enumerate}
\item \label{pt:assu-beta} \emph{Some} assumption on the renormalization function $\beta$ is needed. Indeed, if $\beta$ is linear (or affine) the statement can not be true. In the paper {\cite{crippa2014failure}} $\beta$ was supposed to be strongly convex. Here, on the contrary, we assume \eqref{grwothcondition}. The upper bound $|\beta(\tau)| \leq C |\tau|$ (for large $|\tau|$) on the linear growth at infinity is quite natural, as it  ensures that  
\begin{equation*}
\rho \in L^p, \ u \in L^{p'} \ \Longrightarrow \ \beta(\rho) u \in L^1
\end{equation*}
and thus $\div (\beta(\rho) u)$ is a well defined distribution.  On the contrary, the lower bound $|\beta(\tau)| \geq D |\tau|$ is needed for our proof  to work, but it would be interesting to understand if it can be removed. The global Lipschitz bound on $\beta$ is reminiscent of the original definition of renormalization function by DiPerna and Lions, where $\beta'$ was assumed to be bounded. We believe this assumption can be removed though we prefer to keep it to avoid further technicalities.

\item  \label{pt:assu-t} \emph{Some} assumption on the defect distribution $T$ is needed as well. Since we construct $\rho, u$ such that $\beta(\rho) u \in L^1$ and  \eqref{eq:div-defect} holds, our assumption that $T$ is the divergence of an $L^1$ function is necessary. Theorem \ref{thm:main} shows that it is also sufficient. 

\item The condition $d \geq 3$ is related to \eqref{eq:d-1}, because \eqref{eq:d-1} is an empty condition if $d =1,2$. 

\item Condition \eqref{eq:d-1} is clearly more restrictive than \eqref{eq:1-over-d} which appears in Conjecture  \ref{conj:main} and  in Theorem \ref{thm:transport}. This is due to the fact that the chain rule problem is stationary, whereas  the proof of Theorem \ref{conj:main} for exponents $p,\tilde p$ in the range 
\begin{equation*}
1 + \frac{1}{d} < \frac{1}{p} + \frac{1}{\tilde p} \leq 1 + \frac{1}{d-1}
\end{equation*}
strongly uses time as a sort of additional spatial dimension. A similar problem arises when one looks for ``anomalous'' stationary solutions to the Navier Stokes equation in dimension $d \geq 3$: though the celebrated paper by Buckmaster and Vicol {\cite{buckmaster2019nonuniqueness}} on (evolutionary) Navier-Stokes equation holds for any $d \geq 3$, the proof for the stationary counterpart works only if $d \geq 4$ (see \cite{luo2019stationary}). Similar problems (in different settings) arise also in \cite{ozanski2021} and \cite{giri2021non}.

\item Similarly, we can prove Theorem \ref{thm:main} for $p>1$, but we are not able to extend the proof to $p=1$, though the analog Theorem \ref{thm:transport} for the transport equation holds also in the case $p=1$ (at least in the case of sign changing densities). This is again due to the fact that the proof of Theorem \ref{thm:transport} in the case $p=1$ again heavily exploits the space-time structure of equation \eqref{eq:transport}. 

\item Finally, we are not able to construct positive densities. This would indeed also not be possible under our assumptions on $\beta$. Indeed, according to Theorem \ref{thm:main}, $\beta(\tau) = |\tau|$ is admissible as renormalization function. With such $\beta$, the second equation in \eqref{eq:div-u-rhou} and \eqref{eq:div-defect} become equivalent if $\rho \geq 0$. From a more technical point of view, we need, for the density, building blocks which change sign (see Remark \ref{rem:sign-changing-density} below). 
\end{enumerate}
\end{rem}

Inspired by the construction in {\cite{giri2021non}}, we can prove that, in even dimension, the vector fields in Theorem \ref{thm:main} can be chosen to be Hamiltonian. This is the precise statement.

\begin{thm}\label{thm:hamiltonian}
Assume $d = 2d'$ be even, $d \geq 4$. Then the same statement as in Theorem \ref{thm:main} holds and, in addition, the vector field $u$ is Hamiltonian, i.e. $u = J\nabla H$ for some function $H:\tor\rightarrow\mathbb{R}\in W^{1,p'}(\tor)\cap W^{2,\tilde{p}}(\tor)$ with
\begin{equation*}
J = \begin{pmatrix}
0_{d'} & I_{d'}\\
-I_{d'} & 0_{d'}
\end{pmatrix}.
\end{equation*}
\end{thm}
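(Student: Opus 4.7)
The plan is to rerun the convex-integration scheme that produces Theorem \ref{thm:main} under the additional constraint that every velocity iterate $u_q$ be Hamiltonian, $u_q = J\nabla H_q$. Since $J$ is antisymmetric, $\div(J\nabla H)\equiv 0$ for every $H\in C^2(\T^d)$, so incompressibility comes for free. Conversely, given the convergence $u_q\to u$ in $W^{1,\tilde p}(\T^d)\cap L^{p'}(\T^d)$ with $p,\tilde p\in(1,\infty)$ that the scheme of Theorem \ref{thm:main} provides, the scalar $H$ determined (up to constants) by the Poisson equation $-\Delta H = \sum_{i,j}J_{ij}\partial_i u_j$ lies in $W^{2,\tilde p}\cap W^{1,p'}$ by Calder\'on--Zygmund theory, and it satisfies $u=J\nabla H$. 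It therefore suffices to exhibit, at each inductive stage, a Hamiltonian correction that obeys the same bounds as in the proof of Theorem \ref{thm:main}.

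Following \cite{giri2021non}, I would replace every Mikado-type building block $W^{Q,j}$ appearing in that proof by a Hamiltonian analogue $J\nabla\psi^{Q,j}$, where $\psi^{Q,j}:\T^d\to\R$ is a scalar potential concentrated in a thin tube around a line. Because $\nabla\psi^{Q,j}$ inherits the tube concentration of $\psi^{Q,j}$, so does $J\nabla\psi^{Q,j}$; the new building block therefore exhibits the same codimension-$(d-1)$ concentration and the same $L^{p'}$, $W^{1,\tilde p}$ scaling as the original $W^{Q,j}$. In particular, the dimensional condition \eqref{eq:d-1} is inherited unchanged. The admissible set of tube axes $\mathcal Q$ may have to be enlarged and rotated by $J$ so that the geometric decomposition lemma used to absorb the defect --- which writes the prescribed $L^1$ vector field $h$ with $T=\div h$ as a positive combination of building blocks --- still yields a spanning family; since $J$ is a linear isomorphism of $\R^d$, this is only a cosmetic adjustment in even dimension.

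The velocity correction added at stage $q$ is a finite sum $\sum_{Q,j} a^{Q,j}(x)\, J\nabla\psi^{Q,j}(\lambda_{q+1}x)$, which by the product rule equals $J\nabla\bigl(\lambda_{q+1}^{-1}\sum_{Q,j} a^{Q,j}\psi^{Q,j}_{\lambda_{q+1}}\bigr)$ modulo a lower-order term of size $O(\lambda_{q+1}^{-1})$. Thus $u_{q+1}$ is automatically Hamiltonian, with $H_{q+1}-H_q$ controlled in $W^{2,\tilde p}\cap W^{1,p'}$ by the norm of the velocity correction, and the iteration closes exactly as in Theorem \ref{thm:main}. The main obstacle is precisely the lower-order term $\lambda_{q+1}^{-1}\psi^{Q,j}_{\lambda_{q+1}}\, J\nabla a^{Q,j}$ produced by the product rule: it must be absorbed into the next error without breaking the delicate intermittency balance on which the proof of Theorem \ref{thm:main} rests. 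This is achieved by an inverse-divergence operator tailored to the Hamiltonian structure, which exploits the vanishing mean of $\psi^{Q,j}$ to gain an extra factor of $\lambda_{q+1}^{-\delta}$; the assumption $d\geq 4$ is precisely what makes this gain outweigh the slight loss in intermittency one pays for forcing every correction to be a Hamiltonian gradient. Passing to the limit then yields $u=J\nabla H$ with $H\in W^{1,p'}(\T^d)\cap W^{2,\tilde p}(\T^d)$, as required.
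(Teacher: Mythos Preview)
Your handling of the lower-order term is internally inconsistent and this is where the argument breaks. You write that the principal velocity correction equals $J\nabla\bigl(\lambda^{-1}\sum a^{Q,j}\psi^{Q,j}_\lambda\bigr)$ \emph{modulo} the commutator term $\lambda^{-1}\psi^{Q,j}_\lambda\, J\nabla a^{Q,j}$, conclude that ``$u_{q+1}$ is automatically Hamiltonian'', and then say this same commutator must be ``absorbed into the next error'' via an inverse-divergence operator. These two claims are incompatible: if the commutator is thrown into the Reynolds defect, the actual velocity $u_{q+1}$ is \emph{not} Hamiltonian, only its principal part is, and the Hamiltonian structure is lost at every step. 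No inverse-divergence trick recovers it, and no Calder\'on--Zygmund argument salvages the limit either (recall $\tilde p$ is allowed to be $1$, where CZ fails).

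The paper's resolution is different and simpler: the commutator term is not absorbed into the error at all --- it \emph{replaces} the antidivergence-based corrector $w_c$ of the non-Hamiltonian proof. Concretely, with $W^{Q,j}=J\nabla H^{Q,j}$ one sets $w_c=\sum_{Q,j}\frac{\sign(g_j^Q)|g_j^Q|^{1/p'}}{\lambda}(J\nabla\chi_Q)(H^{Q,j})_\lambda$, so that $w+w_c=J\nabla\bigl(\lambda^{-1}\sum\chi_Q(H^{Q,j})_\lambda\bigr)$ is \emph{exactly} Hamiltonian. The whole point of the Hamiltonian building blocks is that this new $w_c$ satisfies the same (in fact slightly better, by a factor $\mu^{-1}$) estimates as the old one, thanks to the scaling $\|H^{Q,j}\|_{L^r}\lesssim\mu^{(d-1)/p'-1-(d-1)/r}$; the construction of $H^{Q,j}$ with this scaling is the substantive new step. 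Your explanation of the restriction $d\ge 4$ is also off: there is no ``slight loss in intermittency'' to be compensated --- the estimates close unchanged, and $d\ge 4$ is merely the conjunction of $d$ even with the standing hypothesis $d\ge 3$.
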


The main part of the paper (Sections  \ref{sec:main-prop}-\ref{sec:proofofall}) is devoted to prove Theorem \ref{thm:main}, whereas we will sketch the proof of Theorem \ref{thm:hamiltonian} in Section \ref{sec:hamiltonian}.

We add finally some technical  comments enlightening the main differences between the proof of our Theorem \ref{thm:main} and the analog Theorem \ref{thm:transport} for the transport equation proven in {\cite{modena2018non}, \cite{modena2019non}, \cite{modena2020convex}, \cite{brue2021positive}}. 

The proof is based on a convex integration scheme, inspired by the construction introduced by De Lellis and Sz\'ekelyhidi for the Euler equations (\cite{DeLellis:2009jh, DeLellis:2013im}, see also \cite{DeLellis:2012tz, Isett:2016to, Buckmaster:2017uz} for the applications of convex integration to the proof of Onsager's conjecture), coupled with the \emph{intermittency} or \emph{concentration} argument proposed by Buckmaster and Vicol in the mentioned paper {\cite{buckmaster2019nonuniqueness}} for the Navier-Stokes equations (see also \cite{buckmaster-colombo-vicol18} and \cite{burczak-mod-sze21}), and implemented in {\cite{modena2018non}} in the framework of the transport equation. 

From the technical point of view, the two main differences between the proof of our Theorem \ref{thm:main}  and Theorem \ref{thm:transport} in {\cite{modena2018non}, \cite{modena2019non}, \cite{modena2020convex}, \cite{brue2021positive}} are the following. 

On one side, our goal is not only to find an \emph{anomalous} solution to \eqref{eq:div-u-rhou}, where \emph{anomalous} could simply mean, for instance, that the l.h.s. of \eqref{eq:div-defect} is nonzero
 (in analogy to Theorem \ref{thm:transport} where densities are identically one at time $t=0$ and not identically one at time $t=1$, but the values of $\rho$ for times $t \in (0,1)$ are not prescribed). 
 
 On the contrary, we want to solve \emph{simultaneously} \eqref{eq:div-u-rhou} and \eqref{eq:div-defect}, for \emph{given} $\beta$ and $T$. To do this, we consider equation \eqref{eq:div-u-rhou} and \eqref{eq:div-defect} as a single system of PDEs to which we apply the convex integration scheme (a similar approach has been used in \cite{de2020non} to deal with globally dissipative solutions to the Euler equations). This has two main consequences:
 
\begin{itemize}
\item the \emph{error} (the analog of the Reynolds stress in convex integration schemes for the Euler equation) will not be a vector field (as in the proof of Theorem \ref{thm:transport} in {\cite{modena2018non}, \cite{modena2019non}, \cite{modena2020convex}, \cite{brue2021positive}}), but a $2 \times d$ matrix field. We need thus to introduce a suitable decomposition of $2 \times d$ matrix fields, which allows us to carry on the induction (see Lemma \ref{lemma:decomposition});

\item the perturbations $\nu,w$ we design to cancel the error at each step of the iteration (see \eqref{eq:perturbations-1}-\eqref{eq:perturbations-2})  must take care of both equations \eqref{eq:div-u-rhou} and \eqref{eq:div-defect}. In terms of the \emph{intermittency/concentration} argument, this requires $\nu$ and $\beta(\nu)$ to ``concentrate at the same rate'', and this is possible thanks to the bound on $\beta$ in \eqref{grwothcondition}. 
\end{itemize}  

The second main difference between the proof of our Theorem \ref{thm:main}  and Theorem \ref{thm:transport} in {\cite{modena2018non}, \cite{modena2019non}, \cite{modena2020convex}, \cite{brue2021positive}} is the following. The basic idea of convex integration (in fluid dynamics) is to construct a sequence $(\rho_q, u_q, R_q)$ of approximate solutions to \eqref{eq:div-u-rhou}-\eqref{eq:div-defect}, where $R_q$ is a (matrix valued) field representing the \emph{error} and converging to zero as $q \to \infty$. At each step, given $(\rho_q, u_q, R_q)$, one defines $\rho_{q+1} = \rho_q + \nu_{q+1}$ and $u_{q+1} = u_q + w_{q+1}$, where $\nu_{q+1}, w_{q+1}$ are suitable perturbations having, roughly speaking, the form
\begin{equation}
\label{eq:pert-intro}
\nu_{q+1} = a(R_q) \Theta_{q+1}, \qquad w_{q+1} = b(R_q) W_{q+1}
\end{equation}
which are designed to (almost) cancel the error $R_q$ and produce a new error $R_{q+1} \ll R_q$. In \eqref{eq:pert-intro}, $a,b$ are given functions, and $\Theta_{q+1}, W_{q+1}$ are the \emph{building blocks} of the construction, namely (fast oscillating and highly concentrated) solutions to \eqref{eq:div-u-rhou}. In our problem, however, equation \eqref{eq:div-defect}, which we also need to solve together with \eqref{eq:div-u-rhou}, is nonlinear in $\rho$ and therefore, in general,
\begin{equation*}
\beta(\rho_{q+1}) \neq \beta(\rho_q) + \beta(\nu_{q+1}) \neq \beta(\rho_q) + a(R_q) \beta( \Theta_{q+1}),
\end{equation*}
thus preventing us from merely applying standard techniques. To overcome this difficulty, we design, for every constant $a$, building blocks $\Theta_a, W_a$ (depending on $a$) such that $\beta(a \Theta_a) \approx a \beta(\Theta_a)$ (see Proposition \ref{prop:mikadosgeneral}), and then, by means of suitably designed cutoff functions, we decompose the torus $\T^d$ into small cubes, where the error $R_q$ is well approximated by constants and on each small cube we pick suitable building blocks in the families $\{\Theta_a\}_a, \{W_a\}_a$ (see Section \ref{sec:perturbations}).

\bigskip

We conclude this introduction by fixing some notation:
\begin{itemize}
\item $\tor = \mathbb{R}^d/\mathbb{Z}^d$ is the $d$-dimensional flat torus.
\item We denote by $\lbrace e_1,\dots, e_d\rbrace$ the set of standard basis vectors of $\mathbb{R}^d$.
\item We denote by $\mathbb{R}^{m\times n}$ the set of all $m\times n$-matrices.
\item If $R\in \mathbb{R}^{m\times n}, k\in \lbrace 1,\dots,m\rbrace, $ we denote by $R^m\in\mathbb{R}^n$ the $m$th row of $R.$
\item For $p\in [1,\infty]$, we denote by $p'$ its dual exponent.
\item We use the notation $C(A_1,\dots, A_n)$ to denote a constant which depends only on the quantities $A_1,\dots, A_n.$
\item $\mathbb{N} = \lbrace 0, 1, 2,\dots\rbrace.$
\end{itemize}
For a function $g\in C^\infty(\tor)$ and $\lambda\in\mathbb{N},$ we denote by $g_\lambda:\tor\rightarrow\mathbb{R}$ the $\frac{1}{\lambda}$ periodic function
$$g_\lambda (x) := g(\lambda x).$$
Notice that for every $k\in\mathbb{N}$, $r\in [1,\infty]$
$$\|D^kg_\lambda\|_{L^r(\tor)} = \lambda^k \|D^kg\|_{L^r(\tor)}.$$
For later reference we remark that for $k\in\mathbb{N},a\in\mathbb{R},\mu>0$ and a function $g\in C^\infty_c(\mathbb{R}^{d-1})$ it holds with $g_\mu\in C^\infty_c(\mathbb{R}^{d-1})$ defined by $g_\mu(x) := g(\mu x)$
\begin{align}\label{scaling}
\|D^k(\mu^a g_\mu)\|_{L^r(\mathbb{R}^{d-1})} = \mu^{a+k-\frac{d-1}{r}}\|D^kg\|_{L^r(\mathbb{R}^{d-1})}.
\end{align}

%\section{Sistemare}
%
%We consider the problem
%\begin{equation}\label{eq:prob}
%\begin{cases}
%\dv(u)&=0,\\
%\dv(\rho u)&=0,\\
%\dv(\beta(\rho)u)&=f
%\end{cases}
%\end{equation}
%on the $d$-dimensional flat torus $\mathbb{T}^d$ for $d\geq 3$ and for some given scalar functions $f$ and $\beta.$ We want to apply a convex integration scheme and look for conditions on $f$ and $\beta$ such that Problem \eqref{eq:prob} has a solution \mbox{$u\in W^{1,\tilde{p}}(\mathbb{T}^d)\cap L^{p'}(\tor),$} $\rho\in L^p(\tor)$ for some $1<p,\tilde{p}<\infty.$ \\
%
%
%

\section{Statement of the Main Proposition}
\label{sec:main-prop}

We start in this section the proof of Theorem \ref{thm:main}. Without loss of generality, we can assume that $\beta(0) = 0.$ Else, we replace $\beta$ by $\tilde{\beta} = \beta - \beta(0)$ and notice that $\dv(\tilde{\beta}(\rho)u) = \dv(\beta(\rho)u)$ if $\dv(u) = 0$. Following a well-established habit in fluid dynamics convex integration papers, we state here a \emph{Main Proposition} (the inductive step),  from which Theorem \ref{thm:main} easily follows. The proof of Proposition \ref{prop:main} will be done in Sections \ref{sec:perturbations-main-section}-\ref{sec:defect}-\ref{sec:proofofall}, whereas Sections  \ref{sec:preliminary} and \ref{ss:mikadosgeneral} are devoted respectively to the proof of some preliminary lemmas and to the definition of the (Mikado type) \emph{building blocks} of our construction.

\begin{prop}[Main Proposition]\label{prop:main}
There is a constant $M>0$ such that the following holds. Let $p\in (1,\infty)$ and $\tilde{p}\in \lbrack 1,\infty)$ such that \eqref{eq:d-1} holds. 
%\begin{equation}\label{eq:pp'}
%\frac{1}{p}+ \frac{1}{\tilde{p}}>1+\frac{1}{d-1}.
%\end{equation}
Let $h_0\in C^\infty(\tor; \R^d)$ and let $\rho_0:\tor\rightarrow\mathbb{R},$ $u_0:\tor\rightarrow\mathbb{R}^d$ and $R_0:\tor\rightarrow\mathbb{R}^{2\times d}$ be smooth functions satisfying the equation
\begin{equation}\label{eq:approximatesolution}
\begin{cases}
\dv(u_0)&=0,\\
\dv(\rho_0 u_0)&=-\dv R_0^1,\\
\dv(\beta(\rho_0)u_0-h_0)&=-\dv R_0^2.
\end{cases}
\end{equation}
Then for any $\delta>0$ and $h^\ast\in C^\infty(\tor; \R^d)$ with $\|h^\ast-h_0\|_{L^1(\tor)}\leq\frac{\delta}{4}$  there exist smooth functions  $\rho_1:\tor\rightarrow\mathbb{R},$ $u_1:\tor\rightarrow\mathbb{R}^d$ and $R_1:\tor\rightarrow\mathbb{R}^{2\times d}$ with
\begin{equation}\label{eq:equalitiesofR1}
\begin{cases}
\dv(u_1)&=0,\\
\dv(\rho_1 u_1)&=-\dv R_1^1,\\
\dv(\beta(\rho_1)u_1-h^\ast)&=-\dv R_1^2,
\end{cases}
\end{equation}
satisfying the estimates
\begin{align}
\|\rho_1-\rho_0\|_{L^p(\tor)}&\leq M\|R_0\|_{L^1(\tor)}^\frac{1}{p},\label{est:rho}\\
\|u_1-u_0\|_{L^{p'}(\tor)}&\leq M\|R_0\|_{L^1(\tor)}^\frac{1}{p'},\label{est:ulp}\\
\|u_1-u_0\|_{W^{1,\tilde{p}}(\tor)}&\leq\delta,\label{est:uw}\\
\|R_1\|_{L^1(\tor)}&\leq\delta.\label{est:R}
\end{align}
\end{prop}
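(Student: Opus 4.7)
The plan is to construct the new triple $(\rho_1,u_1,R_1)$ via a single convex-integration step adapted to the matrix-valued defect. As a preliminary reduction, I absorb the change from $h_0$ to $h^\ast$ into the defect by setting $\bar R_0^2:=R_0^2+h^\ast-h_0$ and $\bar R_0^1:=R_0^1$, so that \eqref{eq:approximatesolution} holds with $h^\ast$ in place of $h_0$ and $\bar R_0$ in place of $R_0$, at the modest cost $\|\bar R_0\|_{L^1}\leq \|R_0\|_{L^1}+\delta/4$; the rest of the argument refers to this modified defect.

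Next, I partition $\tor$ into a family $\mathcal{Q}$ of small cubes on which $\bar R_0$ is essentially constant. On each $Q$, I apply Lemma \ref{lemma:decomposition} to write $\bar R_0|_Q$ as a combination, indexed by $j\in\{1,\dots,2d\}$, of elementary $2\times d$ blocks aligned with the $\pm e_j$ directions, weighted by scalar coefficients $a_{Q,j}$ satisfying $\sum_{Q,j}|a_{Q,j}|^2|Q|\lesssim \|\bar R_0\|_{L^1}$. Let $\chi_Q$ be smooth cutoffs subordinate to the partition and let $\Theta^{Q,j}_a,\,\W_a$ denote the Mikado-type building blocks of Proposition \ref{prop:mikadosgeneral}, with oscillation frequency $\lambda$, concentration parameter $\mu$ and amplitude $a$. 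I define the perturbations
\begin{equation*}
\nu:=\jQsum \chi_Q\,a_{Q,j}\,\Theta^{Q,j}_{a_{Q,j}}, \qquad w:=\jQsum \chi_Q\,a_{Q,j}\,\W_{a_{Q,j}},
\end{equation*}
and set $\rho_1:=\rho_0+\nu$, $u_1:=u_0+w$, where $w$ is augmented by a small corrector so that $\dv u_1=0$ exactly. The building blocks are designed so that $\W_a$ is divergence-free and the averages of $\Theta^{Q,j}_a\W_a$ and of $\beta(a\Theta^{Q,j}_a)\W_a$ have precisely the structure (along $\pm e_j$) required so that, after multiplication by $a_{Q,j}$ and summation against the decomposition of $\bar R_0$, the dominant parts of $\nu w$ and $\beta(\nu)w$ cancel respectively $\bar R_0^1$ and $\bar R_0^2$ up to negligible errors. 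The second cancellation is only possible because of the approximate 1-homogeneity $\beta(a\Theta_a)\approx a\beta(\Theta_a)$ encoded in the linear growth hypothesis \eqref{grwothcondition} together with the concentration scaling \eqref{scaling}; this identity is the technical heart of Proposition \ref{prop:mikadosgeneral}.

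To identify $R_1$, I expand
\begin{equation*}
\dv(\rho_1u_1)=\dv(\rho_0u_0)+\dv(\nu u_0+\rho_0 w)+\dv(\nu w),
\end{equation*}
and analogously for the $\beta$-equation, using the decomposition $\beta(\rho_0+\nu)=\beta(\rho_0)+\beta(\nu)+r$ with Lipschitz remainder $r$ (valid because $\beta(0)=0$). Each of the products $\nu w$ and $\beta(\nu)w$ splits into a low-frequency mean part, which by the construction above cancels the corresponding row of $\bar R_0$ modulo cutoff errors, plus a high-frequency oscillatory part of zero mean along the Mikado channels. All residual contributions---the oscillatory remainders, the transport-type cross-terms $\nu u_0+\rho_0 w$ and $\beta(\nu)u_0+\beta(\rho_0)w$, the Lipschitz remainder $r\,u_1$ in the second equation, and the cutoff errors---are inverted to the form $\dv R_1^k$ via an antidivergence operator $\mathcal{R}$ (in the spirit of the Modena--Sz\'ekelyhidi scheme), which gains one factor of $\lambda^{-1}$ per oscillation.

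Finally, the estimates \eqref{est:rho}--\eqref{est:R} follow by combining the scaling \eqref{scaling} and the control $|a_{Q,j}|\lesssim \|R_0\|_{L^1}^{1/2}$ with a judicious choice $\lambda\gg\mu\gg 1$: the building blocks are normalized so that the product $\|\Theta^{Q,j}_a\|_{L^p}\|\W_a\|_{L^{p'}}$ is of order one in $\mu$, which makes \eqref{est:rho} and \eqref{est:ulp} $\mu$- and $\lambda$-independent, while \eqref{est:uw} and \eqref{est:R} open up precisely under the strict inequality \eqref{eq:d-1}. I expect the two main obstacles to be (i) designing the Mikado blocks so that the nonlinear mean-value identity for $\beta(a\Theta_a)\W_a$ holds up to an $L^1$-small oscillatory error---this is the place where the growth hypothesis \eqref{grwothcondition} is essential---and (ii) controlling both rows of the new error $R_1$ simultaneously, in particular reabsorbing the Lipschitz remainder $r\,u_1$ into $R_1^2$ without sacrificing the $\lambda^{-1}$ gain from $\mathcal{R}$, which is where the global Lipschitz hypothesis on $\beta$ enters.
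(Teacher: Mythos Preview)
Your outline has the right architecture, but there is a concrete gap in the amplitude bookkeeping that, as written, prevents you from reaching \eqref{est:rho}--\eqref{est:ulp} for $p\neq 2$. You place the \emph{same} scalar weight $a_{Q,j}$ in front of both $\Theta^{Q,j}$ and $\W$, and you record the control $\sum_{Q,j}|a_{Q,j}|^2|Q|\lesssim\|R_0\|_{L^1}$ together with $|a_{Q,j}|\lesssim\|R_0\|_{L^1}^{1/2}$. For the product $\nu w$ to cancel the first row of the defect you need $a_{Q,j}^2\approx g_j^Q$, so $|a_{Q,j}|=|g_j^Q|^{1/2}$; but then
\[
\|\nu\|_{L^p}^p \approx \sum_{Q,j}|a_{Q,j}|^{p}\,|Q| \;=\; \sum_{Q,j}|g_j^Q|^{p/2}\,|Q|,
\]
which is \emph{not} comparable to $\|R_0\|_{L^1}$ unless $p=2$ (for $p<2$ you only get $\|R_0\|_{L^1}^{1/2}$ on the right of \eqref{est:rho}; for $p>2$ there is no bound in terms of $\|R_0\|_{L^1}$ at all). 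The paper fixes this by an \emph{asymmetric} splitting of the amplitude: the density perturbation carries the weight $|g_j^Q|^{1/p}$ and the field perturbation carries $\operatorname{sign}(g_j^Q)\,|g_j^Q|^{1/p'}$, so that the product still reproduces $g_j^Q$ while $\|\nu\|_{L^p}^p\approx\sum_Q|g_j^Q|\,|Q|\lesssim\|R_0\|_{L^1}$ and likewise $\|w\|_{L^{p'}}^{p'}\lesssim\|R_0\|_{L^1}$. This asymmetry is also what dictates that the Mikado blocks of Proposition~\ref{prop:mikadosgeneral} are called with parameter $a=g_j^Q$ and produce the nonlinear mean $\int\beta(|a|^{1/p}\Theta)W\approx \sigma|a|^{1/p}e_k$, rather than the homogeneity $\beta(a\Theta_a)\approx a\beta(\Theta_a)$ you invoke.

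Two smaller points that would also need repair. First, absorbing $h^\ast-h_0$ into $\bar R_0$ at the outset makes your perturbation amplitudes scale with $\|\bar R_0\|_{L^1}$, so you would only obtain \eqref{est:rho}--\eqref{est:ulp} with $(\|R_0\|_{L^1}+\delta/4)^{1/p}$ on the right; since $\delta$ is not assumed small relative to $\|R_0\|_{L^1}$, this does not give the proposition as stated. The paper instead keeps $h_0-h^\ast$ as a separate piece $R^h$ of $R_1^2$, so it never enters the perturbation. Second, a single cutoff $\chi_Q$ on both $\nu$ and $w$ is not enough for the $\beta$--row: when you take the divergence of $\beta(\nu)w$, the chain rule produces a term $\beta'(\cdots)\nabla\chi_Q\cdot W$ inside the argument of $\beta$, which is neither fast--oscillating with zero mean nor small. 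The paper avoids this with two nested cutoffs $\psi_Q,\chi_Q$ satisfying $\nabla\psi_Q\neq0\Rightarrow\chi_Q=0$ and $\nabla\chi_Q\neq0\Rightarrow\psi_Q=1$, so that the offending $\beta'$--term is multiplied by $\chi_Q=0$. Finally, the parameter hierarchy is $\mu=\lambda^{c}$ with $c>1/\gamma$ (so typically $\mu\gg\lambda$, since one needs $\lambda\mu^{-\gamma}\to0$), not $\lambda\gg\mu$.
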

 \begin{proof}[Proof of Theorem \ref{thm:main}, assuming Proposition \ref{prop:main}]
 Let $M$ be the constant from Proposition \ref{prop:main}. Let $\delta_q = 2^{-q}$ and let $(h_q)_{q\in\mathbb{N}}$ be a sequence of smooth functions such that $h_q\rightarrow h$ in $L^1(\tor)$ and \mbox{$\|h_{q+1}-h_q\|_{L^1(\tor)}\leq\frac{\delta_{q+1}}{4}.$} We construct a sequence $(\rho_q,u_q,R_q)$ of solutions to 
\begin{equation}\label{eq:approximatesolutioniteration}
\begin{cases}
\dv(u_q)&=0,\\
\dv(\rho_q u_q)&=-\dv R_q^1,\\
\dv(\beta(\rho_q)u_q-h_q)&=-\dv R_q^2
\end{cases}
\end{equation}
as follows. We let $\rho_0 = 0,$ $u_0 = 0,$ $R_0^1=0$ and $R_0^2 = h_0.$ With this definition, the tuple $(\rho_0,u_0,R_0,h_0)$ satisfies \eqref{eq:approximatesolution}. Assume now $(\rho_q, u_q, R_q, h_q)$ is defined, satisfying \eqref{eq:approximatesolutioniteration}. Then we obtain $(\rho_{q+1},u_{q+1}, R_{q+1})$ 
 by applying Proposition \ref{prop:main} to $(\rho_q, u_q, R_q)$ with $h^\ast = h_{q+1},$ $\delta =\delta_{q+1}.$ This tuple satisfies \eqref{eq:approximatesolutioniteration} with $q$ replaced by $q+1$ and by \eqref{est:rho} -- \eqref{est:R}  it satisfies for $q\geq 1$ the (inductive) estimates
\begin{align*}
\|\rho_{q+1}-\rho_q\|_{L^p(\tor)}&\leq  M\|R_q\|_{L^1(\tor)}^\frac{1}{p}\leq  M \delta_{q}^\frac{1}{p},\\
\|u_{q+1}-u_q\|_{L^{p'}(\tor)}&\leq  M\|R_q\|_{L^1(\tor)}^\frac{1}{p'}\leq  M\delta_{q}^\frac{1}{p'},\\
\|u_{q+1}-u_q\|_{W^{1,\tilde{p}}(\tor)}&\leq\delta_{q+1},\\
\|R_{q+1}\|_{L^1(\tor)}&\leq\delta_{q+1}.
\end{align*}
These estimates, together with the assumption $C|\tau|\leq\beta(\tau)\leq D|\tau|$ for large values of $|\tau|$ show that there exist $\rho\in L^p(\tor)$ and $u\in W^{1,\tilde{p}}(\tor)\cap L^{p'}(\tor)$ such that
\begin{align*}
\rho_q &\rightarrow \rho \text{ in } L^p(\tor),\\
u_q &\rightarrow u \text{ in } W^{1,\tilde{p}}(\tor)\cap L^{p'}(\tor),
\end{align*}
being a weak solution to \eqref{eq:div-u-rhou} and \eqref{eq:div-defect}.
 \end{proof}

\section{Preliminary tools}
\label{sec:preliminary}

We state here two preliminary lemmas. The first one concerns the existence of an \emph{antidivergence operator} which allows a gain of a factor $\lambda^{-1}$ when applied to the product of two maps, one of the two being oscillating with period $\lambda^{-1}$. For the proof we refer to \cite[Lemma 2.3]{modena2018non}.

\begin{lemma}[Antidivergence operator]\label{lemma:antidiv}
Let $\lambda\in\mathbb{N} \setminus \{0\}$ and $f,g:\tor\rightarrow\mathbb{R}$ be smooth functions with
\begin{align*}
\int_{\tor}fg_\lambda \, dx = \int_{\tor}g \, dx= 0.
\end{align*}
Then there exists a smooth vector field $u:\tor\rightarrow\mathbb{R}^d$ such that $\dv u =fg_\lambda$ and for every $p\in[1,\infty]$ and $k\in\mathbb{N}$
\begin{align*}
\|D^ku\|_{L^p(\tor)}\leq C_{k,p}\lambda^{k-1}\|f\|_{C^{k+1}(\tor)}\|g\|_{W^{k,p}(\tor)}.
\end{align*}
We write $u = \mathcal{R}(fg_\lambda).$
\end{lemma}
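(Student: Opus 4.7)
The strategy exploits the fact that $g_\lambda$ admits an antidivergence whose $L^p$ norm is smaller by a factor $\lambda^{-1}$ due to scaling. Concretely, I first construct $G\in C^\infty(\T^d;\R^d)$ with $\div G = g$ and $\int_{\T^d} G = 0$: since $g$ is smooth and mean-zero, I solve $\Delta\phi = g$ with $\phi$ of zero mean (uniquely) and set $G := \nabla\phi$. Rescaling to $\tilde G(x) := \lambda^{-1} G(\lambda x)$ gives $\div \tilde G = g_\lambda$ and, by periodicity of the torus,
\[
\|D^k \tilde G\|_{L^p(\T^d)} \;=\; \lambda^{k-1}\,\|D^k G\|_{L^p(\T^d)}.
\]
The exponent $\lambda^{k-1}$ instead of $\lambda^k$ is precisely the gain we want, and a standard elliptic estimate gives $\|G\|_{W^{k,p}(\T^d)} \leq C_{k,p}\,\|g\|_{W^{k,p}(\T^d)}$.

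With this small potential in hand, I move one derivative off $g_\lambda$ onto $f$ via the Leibniz identity
\[
f g_\lambda \;=\; f \,\div \tilde G \;=\; \div(f\, \tilde G) \;-\; \nabla f \cdot \tilde G,
\]
and set
\[
u \;:=\; f\,\tilde G \;-\; \mathcal{R}_0(\nabla f \cdot \tilde G),
\]
where $\mathcal{R}_0$ is a standard ``gain-one-derivative'' antidivergence on mean-zero smooth functions (for instance $\mathcal{R}_0 h := \nabla\Delta^{-1} h$). The hypothesis $\int f g_\lambda = 0$, together with the divergence theorem, guarantees that $\nabla f \cdot \tilde G$ has zero mean, so $\mathcal{R}_0$ applies; by construction $\div u = f g_\lambda$. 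Estimating the two pieces separately via the Leibniz rule and the scaling above, the dominant contribution comes from placing all $k$ derivatives on $\tilde G$, giving
\[
\|D^k(f\, \tilde G)\|_{L^p} \;\leq\; C_{k,p}\,\lambda^{k-1}\,\|f\|_{C^k}\,\|g\|_{W^{k,p}},
\]
while $\mathcal{R}_0(\nabla f \cdot \tilde G)$ contributes only $C_{k,p}\,\lambda^{k-2}\,\|f\|_{C^{k+1}}\,\|g\|_{W^{k-1,p}}$, which is strictly lower order in $\lambda$. Summing the two bounds yields the claim.

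The main technical point I anticipate is the endpoint exponents $p \in \{1,\infty\}$, where $\mathcal{R}_0 = \nabla\Delta^{-1}$ is not continuous on $L^p$ by classical Calderón--Zygmund obstructions. Since we apply $\mathcal{R}_0$ only to smooth inputs this is not a genuine obstruction, but it does force some care in the bookkeeping. One can either iterate the decomposition once more, applying the integration-by-parts trick again to $\nabla f \cdot \tilde G$ (which is itself of the form ``smooth factor times a mean-zero, fast-oscillating function''), so that the final remainder is regular enough for a cheap Sobolev embedding to close the estimate, or use a Bogovskii-type antidivergence with uniform behaviour in $p$ on mean-zero smooth data. In either variant, the possible loss of a derivative on $g$ is absorbed by the fact that the statement already allows $\|g\|_{W^{k,p}}$ on the right-hand side, rather than the sharper $\|g\|_{W^{k-1,p}}$.
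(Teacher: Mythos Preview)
Your proposal is correct and follows essentially the same route as the proof the paper cites from \cite{modena2018non} (the present paper gives no argument of its own): take $G=\nabla\Delta^{-1}g$, rescale to $\tilde G=\lambda^{-1}G(\lambda\,\cdot)$, and peel off the slow factor via the Leibniz identity. Your concern about the endpoints $p\in\{1,\infty\}$ can in fact be resolved more cheaply than by iteration or a Bogovski\u{\i} operator: on $\T^d$ the convolution kernel of $\nabla\Delta^{-1}$ has a local singularity of order $|x|^{1-d}$ and is smooth away from the origin, hence belongs to $L^1(\T^d)$, so Young's inequality gives $\|\mathcal{R}_0 h\|_{L^p}\le C\|h\|_{L^p}$ for \emph{every} $p\in[1,\infty]$; commuting $D^k$ past $\mathcal{R}_0$ then already yields $\|D^k\mathcal{R}_0(\nabla f\cdot\tilde G)\|_{L^p}\le C\lambda^{k-1}\|f\|_{C^{k+1}}\|g\|_{W^{k,p}}$, which is all the statement asks for.
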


The second lemma is a decomposition of $2 \times d$-matrix fields, which allows us to deal with the second equation in \eqref{eq:div-u-rhou} and \eqref{eq:div-defect} as a single system of PDEs.

\begin{lemma}\label{lemma:decomposition}
Let $R:\tor\rightarrow\mathbb{R}^{2\times d}$ be a smooth function. There exist smooth functions $g_j:\tor\rightarrow\mathbb{R}^d$  and $z_j\in\lbrace e_1,\dots,e_d\rbrace,j\in\lbrace 1,\dots, 2d\rbrace,$ with the following property. We have the decomposition
\begin{equation*}
R(x)=\sum_{j=1}^{2d}\begin{pmatrix}
g_j(x)\\(-1)^{j+1}g_j(x)
\end{pmatrix}
\otimes z_j
\end{equation*}
and the estimate $\|g_j\|_{L^1(\tor)}\leq \|R\|_{L^1(\tor)}$ as well as the pointwise estimate $|g_j(x)|\leq |R(x)|.$
\end{lemma}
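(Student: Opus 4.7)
The plan is to reduce the lemma to a pointwise linear-algebra statement and decompose $R(x)$ column by column. Every vector $(a,b)^\top \in \R^2$ admits the expansion $(a,b)^\top = \tfrac12(a+b)(1,1)^\top + \tfrac12(a-b)(1,-1)^\top$, so for each column index $k \in \{1,\dots,d\}$ I would split the $k$-th column of $R$ into two pieces, one supported on the ``symmetric'' direction $(1,1)^\top$ and one on the ``antisymmetric'' direction $(1,-1)^\top$. This produces exactly $2d$ pieces, matching the index range $j \in \{1,\dots,2d\}$ of the statement, with the alternating sign $(-1)^{j+1}$ encoding precisely the symmetric/antisymmetric split.

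Concretely, for each $k \in \{1,\dots,d\}$ I would set $g_{2k-1}(x) := \tfrac12(R_{1k}(x) + R_{2k}(x))$ and $g_{2k}(x) := \tfrac12(R_{1k}(x) - R_{2k}(x))$ together with $z_{2k-1} = z_{2k} = e_k$. Since $(-1)^{(2k-1)+1} = 1$ and $(-1)^{(2k)+1} = -1$, the two terms $j = 2k-1$ and $j = 2k$ contribute, respectively, $\binom{g_{2k-1}}{g_{2k-1}} \otimes e_k$ and $\binom{g_{2k}}{-g_{2k}} \otimes e_k$; their sum is the $2 \times d$ matrix whose $k$-th column equals $(R_{1k}(x), R_{2k}(x))^\top$ and whose remaining columns vanish. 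Summing further over $k \in \{1,\dots,d\}$ then recovers $R(x)$, and smoothness of each $g_j$ is immediate because the $g_j$ are just affine combinations of the entries of $R$.

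For the pointwise bound I would use the elementary chain $\tfrac12(|a|+|b|) \le \sqrt{(a^2+b^2)/2} \le \sqrt{a^2+b^2}$ to conclude
$|g_j(x)| \le \tfrac12(|R_{1k}(x)|+|R_{2k}(x)|) \le \sqrt{R_{1k}(x)^2+R_{2k}(x)^2} \le |R(x)|$,
where $|R(x)|$ denotes the Frobenius norm. Integrating over $\T^d$ immediately yields $\|g_j\|_{L^1(\T^d)} \le \|R\|_{L^1(\T^d)}$. I do not expect any real obstacle: the lemma is essentially the observation that $\{(1,1)^\top,(1,-1)^\top\}$ is a basis of $\R^2$ whose dual coefficients are controlled, up to the factor $1/2$, by the $\ell^2$ norm of the expanded vector; the only mild point to track is the compatibility of this factor with the matrix norm, which is exactly what AM--QM provides.
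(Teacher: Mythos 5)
Your proposal is correct and is essentially identical to the paper's own proof: both define $g_{2k-1} = \tfrac12(R^{1,k}+R^{2,k})$, $g_{2k} = \tfrac12(R^{1,k}-R^{2,k})$ with $z_{2k-1}=z_{2k}=e_k$ and verify the decomposition column by column. The only addition in your write-up is that you actually spell out the pointwise bound $|g_j(x)|\le |R(x)|$ via AM--QM, which the paper states without proof; that step is correct (and one could also just note $|g_j| = \tfrac12|R^{1,k}\pm R^{2,k}| \le \tfrac12(|R^{1,k}|+|R^{2,k}|) \le (R^{1,k\,2}+R^{2,k\,2})^{1/2} \le |R|$ directly).
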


\begin{rem}
Due to  $z_j\in\lbrace e_1,\dots,e_d\rbrace$, one single summand of the above representation has the form
\begin{equation*}
\begin{pmatrix}
g_j(x)\\ (-1)^{j+1}g_j(x)
\end{pmatrix}
\otimes z_j =
\begin{pmatrix}
0 & \dots & g_j(x) & \dots 0\\
0 & \dots & (-1)^{j+1}g_j(x) & \dots 0
\end{pmatrix}.
\end{equation*}
\end{rem}

\begin{proof}
Let us first write $R$ in the standard basis as
$$R(x)=\begin{pmatrix}
R^{1,1}(x) & \dots & R^{1,d}(x)\\
R^{2,1}(x) & \dots & R^{2,d}(x)
\end{pmatrix}.$$ Let \mbox{$k\in\lbrace 1,\dots, d\rbrace.$} We define 
\begin{align*}
g_{2k-1}(x) = \frac{R^{1,k}(x) + R^{2,k}(x)}{2},\hspace{0,3cm} &z_{2k-1} = e_{k},\\
g_{2k}(x) = \frac{R^{1,k}(x) - R^{2,k}(x)}{2}, \hspace{0,3cm}&z_{2k} = e_{k}.
\end{align*}
Then we have
\begin{align*}
 R^{1,k} e_{k}&=g_{2k-1}z_{2k-1} + g_{2k}z_{2k},\\
R^{2,k} e_k&=(-1)^{2k-1+1} g_{2k-1}z_{2k-1}+ (-1)^{2k+1} g_{2k}z_{2k},
\end{align*}
hence this yields the desired decomposition.
\end{proof}

\section{The Building Blocks}
\label{ss:mikadosgeneral}

In this section we construct the building blocks of our construction. They are ``concentrated Mikado type'' densities and vector fields (see \cite{SzekelyhidiJr:2016tp} for the origin of \emph{Mikado flows}), with the additional property that they are designed in order to match in a suitable way both with the linear (in $\rho$) equation \eqref{eq:div-u-rhou} and with the nonlinear (because of the presence of the renormalization function $\beta$) equation \eqref{eq:div-defect}.

\begin{prop}\label{prop:mikadosgeneral}
Let $0\neq a\in\mathbb{R},$ $\sigma\in\left\lbrace-1,1\right\rbrace$ and $\mu_0=\mu_0(a)>0$ such that \eqref{grwothcondition} holds for all $|\tau|\geq  |a|^\frac{1}{p}\mu_0^{\frac{d-1}{p}}.$ For all $k\in \lbrace 1,\dots,d\rbrace,$ $\zeta>0,$ $\mu\geq\mu_0,$  we have a \emph{Mikado Density}
$\Theta_{a,k,\zeta,\mu,\sigma}\in C^\infty(\tor)$ and  a \emph{Mikado Field}  $W_{a,k,\zeta,\mu,\sigma}\in C^\infty(\tor;\mathbb{R}^d)$ satisfying the following properties:
\begin{equation}\label{eq:propertieofmikadosgeneral}
\begin{cases}
\dv W_{a,k,\zeta,\mu,\sigma} &= 0,\\
\dv \Theta_{a,k,\zeta,\mu,\sigma}W_{a,k,\zeta,\mu,\sigma} &= 0,\\
\int_{\mathbb{T}^{d}}W_{a,k,\zeta,\mu,\sigma}\, dx & = 0,\\
\left|\int_{\mathbb{T}^{d}}\Theta_{a,k,\zeta,\mu,\sigma}W_{a,k,\zeta,\mu,\sigma}\, dx - e_k \right|&< \zeta,\\
\left|\int_{\mathbb{T}^{d}}\beta(|a|^\frac{1}{p}\Theta_{a,k,\zeta,\mu,\sigma})W_{a,k,\zeta,\mu,\sigma}\, dx - \sigma |a|^\frac{1}{p} e_k\right| &<\zeta
\end{cases}
\end{equation}
and support contained in $\left[(0,\frac{1}{\mu})^{k-1}\times\mathbb{R}\times (0,\frac{1}{\mu})^{d-k}\right] + \mathbb{Z}^d.$
Furthermore, there exists a constant $M_0>0$, depending on the renormalization map $\beta$, but  independent of $a,k,\zeta,\mu$ and $\sigma$, such that the following estimates hold for all $r\in [1,\infty]$
\begin{align}
\|\Theta_{a,k,\zeta,\mu,\sigma}\|_{L^r(\tor)}&\leq M_0\mu^{\frac{d-1}{p}-\frac{d-1}{r}},\label{eq:thetainlrgeneral}\\
\|W_{a,k,\zeta,\mu,\sigma}\|_{L^r(\tor)}&\leq M_0\mu^{\frac{d-1}{p'}-\frac{d-1}{r}}, \label{eq:winlrgeneral}\\
\|\Theta_{a,k,\zeta,\mu, \sigma}W_{a,k,\zeta,\mu,\sigma}\|_{L^r(\tor)}&\leq M_0^2\mu^{d-1-\frac{d-1}{r}}.\nonumber
\end{align}
Furthermore, it holds for all $m\in\mathbb{N}$ and $r\in\lbrack 1,\infty\rbrack$
\begin{align*}
\|D^m\Theta_{a,k,\zeta,\mu,\sigma}\|_{L^r(\mathbb{T}^d)}&\leq C(a,\zeta,m)\mu^{\frac{d-1}{p}+m-\frac{d-1}{r}},\\
\|D^mW_{a,k,\zeta,\mu,\sigma}\|_{L^r(\mathbb{T}^d)}&\leq C(a,\zeta,m) \mu^{\frac{d-1}{p'}+m-\frac{d-1}{r}}.
\end{align*}
In particular, we have that
\begin{align}
\|\Theta_{a,k,\zeta,\mu,\sigma}\|_{L^p(\tor)} &\leq M_0,\label{eq:thetainlpgeneral}\\
\|W_{a,k,\zeta,\mu,\sigma}\|_{L^{p'}(\tor)}&\leq M_0,\label{eq:winlp'general}\\
\|\Theta_{a,k,\zeta,\mu,\sigma}W_{a,k,\zeta,\mu}\|_{L^1(\tor)}&\leq M_0^2,  \label{eq:thetawproductgeneral}\\
\|DW_{a,k,\zeta,\mu,\sigma}\|_{L^{\tilde{p}}(\tor)}&\leq C(a,\zeta,m)\mu^{-\gamma}\label{eq:dwlptildegeneral}
\end{align}
with $\gamma = \frac{d-1}{\tilde{p}}-\frac{d-1}{p'}-1>0$ because of \eqref{eq:d-1}.
\end{prop}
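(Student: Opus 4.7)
I plan to construct $\Theta$ and $W$ as concentrated transverse profiles that depend only on the variables $x' = (x_1,\dots,x_{k-1},x_{k+1},\dots,x_d)$ and point along $e_k$. Concretely, the ansatz is
\begin{equation*}
\Theta_{a,k,\zeta,\mu,\sigma}(x) = \mu^{(d-1)/p}\tilde\theta(\mu x'), \qquad W_{a,k,\zeta,\mu,\sigma}(x) = \mu^{(d-1)/p'}\tilde w(\mu x')\,e_k,
\end{equation*}
where $\tilde\theta,\tilde w \in C^\infty_c((0,1)^{d-1})$ are profiles still to be designed. Since both $W$ and $\Theta W$ are $x_k$-independent and point along $e_k$, the first two equations in \eqref{eq:propertieofmikadosgeneral} hold automatically, and the zero-mean condition reduces to $\int \tilde w\, dy' = 0$ under the change of variables $y'=\mu x'$. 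All the $L^r$, derivative, and mixed estimates \eqref{eq:thetainlrgeneral}--\eqref{eq:dwlptildegeneral} then follow directly from the scaling identity \eqref{scaling}, with the positive exponent $\gamma$ in \eqref{eq:dwlptildegeneral} secured by \eqref{eq:d-1}.

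The core of the proof is arranging the moment conditions (iv) and (v) in \eqref{eq:propertieofmikadosgeneral}. I would fix three pairwise disjoint cubes $\Omega_+, \Omega_-, \Omega_0 \subset (0,1)^{d-1}$ of unit-order measure, and take $\tilde\theta,\tilde w$ to be $C^\infty_c$ approximations of the piecewise-constant profiles $A\chi_{\Omega_+} - B\chi_{\Omega_-}$ and $\omega_+\chi_{\Omega_+} + \omega_-\chi_{\Omega_-} + \omega_0\chi_{\Omega_0}$, for parameters $A, B>0$ and $\omega_+, \omega_-, \omega_0 \in \R$ to be chosen. Since $\mu\geq\mu_0$, the growth hypothesis \eqref{grwothcondition} gives, on each constant region of $\tilde\theta$,
\begin{equation*}
\beta(|a|^{1/p}\mu^{(d-1)/p}A) = s_+^\mu\, |a|^{1/p}\mu^{(d-1)/p}A, \qquad \beta(-|a|^{1/p}\mu^{(d-1)/p}B) = s_-^\mu\, |a|^{1/p}\mu^{(d-1)/p}B,
\end{equation*}
for some (unknown but bounded) slopes $s_+^\mu, s_-^\mu \in [D,C]$. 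Writing $\alpha_\pm = \omega_\pm|\Omega_\pm|$ and $\alpha_0 = \omega_0|\Omega_0|$, the three integral identities in \eqref{eq:propertieofmikadosgeneral} collapse to the linear system
\begin{equation*}
\alpha_+ + \alpha_- + \alpha_0 = 0, \qquad A\alpha_+ - B\alpha_- = 1, \qquad s_+^\mu A\alpha_+ + s_-^\mu B\alpha_- = \sigma,
\end{equation*}
which for any $A, B > 0$, $s_\pm^\mu > 0$ and $\sigma \in \{-1,1\}$ admits the unique solution $\alpha_+ = (s_-^\mu + \sigma)/[A(s_+^\mu + s_-^\mu)]$, $\alpha_- = (\sigma - s_+^\mu)/[B(s_+^\mu + s_-^\mu)]$, $\alpha_0 = -\alpha_+ - \alpha_-$. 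The smoothing of the characteristic functions introduces an error that can be absorbed into $\zeta$ by making the transition layers thin, using the Lipschitz bound on $\beta$ to control the contribution from intermediate values of $\tilde\theta$ for which \eqref{grwothcondition} does not yet apply.

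The main obstacle is that the slopes $s_\pm^\mu$ are not determined a priori --- they depend both on $\mu$ and on the specific $\beta$ --- so the profile must be allowed to depend on $\mu$ and $\sigma$, with $s_\pm^\mu$ read off from $\beta$ at each step. The introduction of the ``balance'' region $\Omega_0$, on which $\tilde\theta\equiv 0$, is essential here: it decouples the mean-zero constraint on $\tilde w$ from the two $\beta$-weighted moment conditions, and thereby makes both signs $\sigma=\pm 1$ achievable regardless of the precise values of $s_\pm^\mu \in [D,C]$. This is also the technical source of the sign change in the density announced in the introduction. Uniformity of the universal constant $M_0$ finally follows from the lower bound $s_\pm^\mu \geq D$: choosing $|\Omega_\pm|,|\Omega_0|$ and $A,B$ of order one (independent of $a,\zeta,\mu,\sigma$), one has $|\alpha_\pm|,|\alpha_0| \lesssim 1/D$, so $\|\tilde\theta\|_{L^p}$ and $\|\tilde w\|_{L^{p'}}$ are bounded by a constant depending only on $\beta$.
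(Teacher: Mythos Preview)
Your proposal is correct and follows essentially the same approach as the paper: a transverse concentrated profile constant along $e_k$, built from three disjoint regions (two on which the density profile takes opposite signs, one on which it vanishes), with the field-profile coefficients determined by a linear system whose solvability and uniform bounds come from the two-sided growth condition \eqref{grwothcondition}, and the smoothing error handled via the Lipschitz bound on $\beta$ with a mollification scale depending on $a$ and $\zeta$. The only cosmetic differences are that the paper fixes the density-profile amplitudes to $\pm 1$ (your $A=B=1$) and writes the $2\times 2$ system directly in terms of $\beta(\pm|a|^{1/p}\mu^{(d-1)/p})$ rather than the normalized slopes $s_\pm^\mu$; your observation that the third region $\Omega_0$ decouples the zero-mean constraint and enables both signs $\sigma=\pm 1$ is exactly the content of the paper's Remark following the proof.
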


Before we can give the definition of our building blocks, the Mikado Fields and Densities, we need to construct a family of auxiliary functions, which will be the content of the next lemma.
\begin{lemma}\label{lemma:construction1}
Let $a, \sigma, \mu_0$ be as in the statement of Proposition \ref{prop:mikadosgeneral}.
% $0\neq a\in\mathbb{R}$, $\sigma\in\left\lbrace-1,1\right\rbrace$ and let $\mu_0 = \mu_0(a)>0$ such that \eqref{grwothcondition}  holds for all $|\tau|\geq  |a|^\frac{1}{p}\mu_0^{\frac{d-1}{p}}.$  
 For every $\zeta>0$, $\mu\geq\mu_0$ there exist two functions $\Psi_{a,\zeta,\mu,\sigma}$, $\tilde{\Psi}_{a,\zeta,\mu,\sigma}\in C^\infty(\mathbb{R}^{d-1})$  with support in $(0,\frac{1}{\mu})^{d-1}$  such that
\begin{align}
\left|\int_{\mathbb{R}^{d-1}}\Psi_{a,\zeta,\mu,\sigma}\tilde{\Psi}_{a,\zeta,\mu,\sigma}\, dx -1\right| &< \zeta,\label{eq:meanvaluenuw}\\
\left|\int_{\mathbb{R}^{d-1}}\beta(|a|^\frac{1}{p}\Psi_{a,\zeta,\mu,\sigma})\tilde{\Psi}_{a,\zeta,\mu,\sigma}\, dx - \sigma |a|^\frac{1}{p}\right|&<\zeta\text{ and}\label{eq:meanvaluebeta}\\
\int_{\mathbb{R}^{d-1}}\tilde{\Psi}_{a,\zeta,\mu,\sigma}\, dx&= 0, \label{eq:meanvaluew-original}
\end{align}
satisfying for any $r\in [1,\infty]$ the estimates
\begin{align}\label{eq:scaling1}
\|\Psi_{a,\zeta,\mu,\sigma}\|_{L^r(\mathbb{R}^{d-1})}&\leq M_1\mu^{\frac{d-1}{p}-\frac{d-1}{r}},\\
\|\tilde{\Psi}_{a,\zeta,\mu,\sigma}\|_{L^r(\mathbb{R}^{d-1})}&\leq M_1\mu^{\frac{d-1}{p'}-\frac{d-1}{r}}\text{ and}\label{eq:scaling5}\\
\|\Psi_{a,\zeta,\mu,\sigma}\tilde{\Psi}_{a,\zeta,\mu,\sigma}\|_{L^r(\mathbb{R}^{d-1})}&\leq M_1^2\mu^{d-1-\frac{d-1}{r}}\label{eq:scaling2}
\end{align}
with a constant $M_1$ depending on the renormalization map $\beta$, but independent of $a,\zeta,\mu$ and $\sigma.$ For $m\geq1,$ it holds
\begin{align}
\|D^m\Psi_{a,\zeta,\mu,\sigma}\|_{L^r(\mathbb{R}^{d-1})}&\leq C(a,\zeta,m)\mu^{\frac{d-1}{p}+m-\frac{d-1}{r}},\label{eq:scaling3}\\
\|D^m\tilde{\Psi}_{a,\zeta,\mu, \sigma}\|_{L^r(\mathbb{R}^{d-1})}&\leq C(a,\zeta,m)\mu^{\frac{d-1}{p'}+m-\frac{d-1}{r}}\label{eq:scaling4}
\end{align}
with constants $C(a,\zeta,m)>0.$
\end{lemma}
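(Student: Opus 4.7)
The plan is to exhibit $\Psi$ and $\tilde\Psi$ as rescalings of fixed profile functions supported in the unit cube. Set
\[
\Psi_{a,\zeta,\mu,\sigma}(x) = \mu^{(d-1)/p}\,\psi(\mu x), \qquad \tilde\Psi_{a,\zeta,\mu,\sigma}(x) = \mu^{(d-1)/p'}\,\tilde\psi(\mu x),
\]
with $\psi,\tilde\psi \in C_c^\infty((0,1)^{d-1})$ to be constructed (allowing $\tilde\psi$ to depend on $a,\zeta,\mu,\sigma$). By the scaling identity \eqref{scaling}, the norm estimates \eqref{eq:scaling1}--\eqref{eq:scaling4} reduce immediately to uniform $L^r$ bounds on $\psi,\tilde\psi$ and their derivatives. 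Changing variables $y = \mu x$ (and using $1/p + 1/p' = 1$), and writing $K := |a|^{1/p}\mu^{(d-1)/p}$, the three integral conditions \eqref{eq:meanvaluenuw}--\eqref{eq:meanvaluew-original} translate into
\[
\Bigl|\int\psi\,\tilde\psi\,dy - 1\Bigr| < \zeta, \qquad \int\tilde\psi\,dy = 0, \qquad \Bigl|\int\beta(K\psi)\,\tilde\psi\,dy - \sigma K\Bigr| < \zeta\,\mu^{(d-1)/p}.
\]

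To construct the profiles, fix three pairwise disjoint open subsets $A,B,C$ of $(0,1)^{d-1}$ of equal measure $\alpha > 0$, together with compact subsets $A' \subset A$, $B' \subset B$, $C' \subset C$. Let $\psi$ be a smooth function supported in $A \cup B$, identically $+1$ on $A'$ and $-1$ on $B'$, with smooth transitions of width $\epsilon = \epsilon(a,\zeta)$ to be fixed. Set
\[
\tilde\psi = J_A\,\chi_A + J_B\,\chi_B + J_C\,\chi_C,
\]
where $\chi_X \in C_c^\infty(X)$ is a plateau equal to $1$ on $X'$ (again of transition width $\epsilon$). Using $\beta(0) = 0$ and $\psi \equiv \pm 1$ on $A' \cup B'$, and momentarily dropping the $O(\epsilon)$ smoothing corrections, the first two conditions become the $2 \times 2$ linear system
\[
\alpha(J_A - J_B) = 1, \qquad \alpha\bigl(\beta(K)\,J_A + \beta(-K)\,J_B\bigr) = \sigma K.
\]
Its determinant equals $-\alpha^2(\beta(K) + \beta(-K))$, and since $\mu \geq \mu_0(a)$ forces $|K| \geq |a|^{1/p}\mu_0^{(d-1)/p}$, the growth condition \eqref{grwothcondition} gives $\beta(\pm K) \in [DK, CK]$, so the determinant is of order $K$ and bounded away from zero. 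The explicit solution
\[
J_B = \frac{\sigma K - \beta(K)}{\alpha(\beta(K) + \beta(-K))}, \qquad J_A = J_B + \frac{1}{\alpha}
\]
satisfies $|J_A|, |J_B| \leq M_1$ for a constant $M_1$ depending only on $C,D,\alpha$, and in particular independent of $a,\mu,\sigma$. Finally, $J_C$ is chosen so that $\int\tilde\psi = 0$ holds exactly (using the true smoothed integrals); it is likewise controlled by $M_1$.

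It remains to tune $\epsilon$. The smoothing error in the first condition is $O(M_1 \epsilon)$, while the smoothing error in the third is at most $\|\beta\|_{\Lip}\,K\,M_1\,\epsilon$; since $K/\mu^{(d-1)/p} = |a|^{1/p}$, the latter stays below the tolerance $\zeta\mu^{(d-1)/p}$ as soon as $\epsilon \lesssim \zeta/|a|^{1/p}$. With this choice, the bounds $\|\psi\|_\infty \leq 1$ and $\|\tilde\psi\|_\infty \leq M_1$ yield \eqref{eq:scaling1}--\eqref{eq:scaling5} after undoing the rescaling, while higher derivatives scale as $\epsilon^{-m}$, producing the $C(a,\zeta,m)$-dependent bounds \eqref{eq:scaling3}--\eqref{eq:scaling4}.

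The main technical delicacy I foresee is that on the transition strips the argument $K\psi$ takes values throughout $[-K,K]$ and need not lie in the asymptotic linear regime of $\beta$; there one controls $\beta(K\psi)$ only by its Lipschitz constant, which is precisely what forces $\epsilon$ to depend on $a$ in order to beat the scale $\zeta\mu^{(d-1)/p}$ in the third condition, and explains the $a$-dependence permitted in the higher-derivative constants of \eqref{eq:scaling3}--\eqref{eq:scaling4}.
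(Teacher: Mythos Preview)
Your proposal is correct and follows essentially the same approach as the paper: build a sign-changing profile on two disjoint regions, solve a $2\times 2$ linear system for the coefficients of the dual profile (using the two-sided growth bound \eqref{grwothcondition} to control the inverse uniformly in $a,\mu,\sigma$), add a third region to enforce zero mean, and finally smooth with a parameter depending on $a$ and $\zeta$. The only cosmetic difference is that the paper first writes down discontinuous indicator-function profiles satisfying the integral identities \emph{exactly} and then mollifies, whereas you start with smooth plateaus and treat the transition strips as an $O(\epsilon)$ perturbation of the linear system; both routes lead to the same estimates and the same dependence of the derivative constants on $(a,\zeta)$.
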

\begin{proof}
Let $a\neq 0,$ $\sigma\in\left\lbrace-1,1\right\rbrace$ and fix $\zeta>0$ and $\mu\geq\mu_0(a).$ Let $P_1=(\frac{3}{4},\dots,\frac{3}{4}), P_2 = (\frac{1}{4},\frac{3}{4},\dots,\frac{3}{4}), P_3 =(\frac{3}{4},\frac{1}{4},\frac{3}{4},\dots,\frac{3}{4})\in\mathbb{R}^{d-1}.$
We consider first two (non-continuous) functions $\varphi,\tilde{\varphi}:\mathbb{R}^{d-1}\rightarrow\mathbb{R}$ defined by
%\begin{equation*}
%\varphi(x)= \begin{cases}
%1, &x\in B_\frac{1}{8}(P)\\
%-1, &x\in B_\frac{1}{8}(-P)\\
%0, &\text{else}
%\end{cases}
%\end{equation*}

\begin{equation}
\label{eq:varphi}
\varphi = \chi_{B_{\frac{1}{8}}(P_1)} - \chi_{B_{\frac{1}{8}}(P_2)}
\end{equation}
and

%\begin{equation*}
%\tilde{\varphi}(x)= \begin{cases}
%\alpha_1, &x\in B_\frac{1}{8}(P)\\
%\alpha_2, &x\in B_\frac{1}{8}(-P)\\
%\alpha_3, &x\in B_\frac{1}{8}(P^\ast)\\
%0, &\text{else}
%\end{cases}
%\end{equation*}

\begin{equation}
\label{eq:varphitilde}
\tilde{\varphi} = \alpha_1\chi_{B_{\frac{1}{8}}(P_1)}+\alpha_2\chi_{B_{\frac{1}{8}}(P_2)} + \alpha_3 \chi_{B_{\frac{1}{8}}(P_3)}
\end{equation}
with $\alpha=(\alpha_1, \alpha_2,\alpha_3)\in\mathbb{R}^3$ to be determined.
We will choose $\alpha = \alpha(a,\mu,\sigma)$ depending on $a,\mu$ and $\sigma$ such that the concentrated versions of these functions satisfy equations \eqref{eq:meanvaluenuw} and \eqref{eq:meanvaluebeta} exactly. To be precise, this means that the concentrated functions
\begin{align*}
\psi(x) := \mu^{\frac{d-1}{p}}\varphi(\mu x) \text{ and}\\
\tilde{\psi}(x) := \mu^{\frac{d-1}{p'}}\tilde{\varphi}(\mu x)
\end{align*}
satisfy
\begin{align}
\int_{\mathbb{R}^{d-1}}\psi \tilde \psi\, dx &= 1 \text{ and }\label{eq:meanvaluenuwexact}\\
\int_{\mathbb{R}^{d-1}}\beta(|a|^\frac{1}{p}\psi)\tilde \psi \, dx&= \sigma|a|^\frac{1}{p}.\label{eq:meanvaluebetaexact}
\end{align}
%\int_{\mathbb{R}^{d-1}}\tilde \psi\, dx &= 0. 
%Written down explicitly, the first two equations \eqref{eq:meanvaluenuwexact} and \eqref{eq:meanvaluebetaexact} mean
Equations \eqref{eq:meanvaluenuwexact} and \eqref{eq:meanvaluebetaexact} can be rewritten as a system of linear equations in the unknowns $\alpha_1, \alpha_2$, by calculating the integrals on the left-hand side. Indeed, we need to solve
%\begin{align*}
%\alpha_1 - \alpha_2 =\frac{4^{d-1}}{\omega_{d-1}}\\
%\frac{\beta\left(|a_j^Q|^\frac{1}{p}\sign(a_j^Q)\mu^{\frac{d-1}{p}}%\right)}{\mu^{\frac{d-1}{p}}} \alpha_1 + \frac{\beta\left(-|a_j^Q|^\frac{1}{p}\sign(a_j^Q)\mu^{\frac{d-1}{p}}\right)}{\mu^{\frac{d-1}{p}}} \alpha_2 = \frac{4^{d-1}|a_j^Q|^\frac{1}{p}}{\omega_{d-1}}
%\end{align*}

\begin{equation}
\label{eq:alpha}
\begin{pmatrix}
1 & -1  \\
\frac{\beta\left(|a|^\frac{1}{p}\mu^{\frac{d-1}{p}}\right)}{\mu^{\frac{d-1}{p}}}  & \frac{\beta\left(-|a|^\frac{1}{p}\mu^{\frac{d-1}{p}}\right)}{\mu^{\frac{d-1}{p}}}
\end{pmatrix}
\cdot
\begin{pmatrix}
\alpha_1\\
\alpha_2
\end{pmatrix}
=
\begin{pmatrix}
\frac{8^{d-1}}{\omega_{d-1}}\\
\frac{8^{d-1}\sigma|a|^\frac{1}{p}}{\omega_{d-1}}
\end{pmatrix},
\end{equation}
where $\omega_{d-1}$ is the Lebesgue measure of $B_1(0)\subset\mathbb{R}^{d-1}.$ 
%Then, since the determinant of the above matrix (which we will call $A$ now) equals 
%$$\det(A)=\frac{\beta\left(-|a|^\frac{1}{p}\sign(a)\mu^{\frac{d-1}{p}}\right)}{\mu^{\frac{d-1}{p}}} + \frac{\beta\left(|a|^\frac{1}{p}\sign(a)\mu^{\frac{d-1}{p}}\right)}{\mu^{\frac{d-1}{p}}}> 0,$$
The above matrix is invertible (recall that $\beta$ is nonnegative for the choice of $\mu \geq \mu_0$) and the values $\alpha_1,\alpha_2$ are given by
\begin{align*}
\begin{pmatrix}
\alpha_1\\
\alpha_2
\end{pmatrix}
&=
%A^{-1}\cdot\begin{pmatrix}
%\frac{8^{d-1}}{\omega_{d-1}}\\
%\frac{8^{d-1}|a|^\frac{1}{p}}{\omega_{d-1}}
%\end{pmatrix}\\
%&=
\frac{1}{\beta\left(-|a|^\frac{1}{p}\mu^{\frac{d-1}{p}}\right)+\beta\left(|a|^\frac{1}{p}\mu^{\frac{d-1}{p}}\right)}
\begin{pmatrix}
\beta\left(-|a|^\frac{1}{p}\mu^{\frac{d-1}{p}}\right) & \mu^{\frac{d-1}{p}}\\
-\beta\left(|a|^\frac{1}{p}\mu^{\frac{d-1}{p}}\right) & \mu^{\frac{d-1}{p}}
\end{pmatrix}\cdot
\begin{pmatrix}
\frac{8^{d-1}}{\omega_{d-1}}\\
\frac{8^{d-1}\sigma|a|^\frac{1}{p}}{\omega_{d-1}}
\end{pmatrix}\\
&= \frac{8^{d-1}}{\omega_{d-1}\left\lbrack\beta\left(-|a|^\frac{1}{p}\mu^{\frac{d-1}{p}}\right)+\beta\left(|a|^\frac{1}{p}\mu^{\frac{d-1}{p}}\right)\right\rbrack}
\begin{pmatrix}
\beta\left(-|a|^\frac{1}{p}\mu^{\frac{d-1}{p}}\right)+\mu^{\frac{d-1}{p}}\sigma|a|^\frac{1}{p}\\
-\beta\left(|a|^\frac{1}{p}\mu^{\frac{d-1}{p}}\right)+\mu^{\frac{d-1}{p}}\sigma|a|^\frac{1}{p}
\end{pmatrix}.
\end{align*} 
This fixes $\alpha_1, \alpha_2$ (depending on $a,\mu,\sigma$) in the definition of $\tilde{\varphi}.$ Now using again inequality \eqref{grwothcondition} (which holds for $\mu \geq \mu_0$), we see that
\begin{align*}
|\alpha_1|, |\alpha_2|\leq \frac{8^{d-1}}{\omega_{d-1}}  \left( 1 + \frac{1}{D} \right).
\end{align*}
We then define $\alpha_3 := - (\alpha_1 + \alpha_2)$ to ensure that
\begin{equation}
\label{eq:meanvaluew}
\int_{\R^{d-1}} \tilde \varphi \, dx = 0.
\end{equation}
Notice that 
$$|\alpha_3|\leq  \frac{2 \cdot 8^{d-1}}{\omega_{d-1}} \left( 1 + \frac{1}{D} \right).$$
This means that $\varphi,\tilde{\varphi}$ are bounded (in $L^\infty$) with a bound dependent on the constants $D$ (thus on the renormalization map $\beta$), but independent of $a,$ $\zeta,$ $\mu$ and $\sigma,$ namely
$$ |\varphi|, |\tilde \varphi| \leq M_1 := \max\left\lbrace 1, \frac{2 \cdot 8^{d-1}}{\omega_{d-1}} \left( 1 + \frac{1}{D} \right) \right\rbrace.$$
Now, since we want the perturbations to be smooth functions, we will use a mollification of the maps $\varphi$ and $\tilde{\varphi}$ introduced in \eqref{eq:varphi}, \eqref{eq:varphitilde}. We set
\begin{align*}
\Phi &:= \eta_{\ell}\ast \varphi,\\
\tilde{\Phi} &:= \eta_{\ell}\ast \tilde{\varphi}.
\end{align*}
Here, $\eta$ is a standard mollifier and we fix $\ell = \ell(a,\zeta)$ so small such that (recall that $p,p' \in (1, \infty)$ by assumption)
\begin{align*}
\|\Phi-\varphi\|_{L^p(\mathbb{R}^{d-1})},\|\tilde{\Phi}-\tilde{\varphi}\|_{L^{p'}(\mathbb{R}^{d-1})}&<  \frac{\zeta}{2M_1}\text{ and}\\
|a|^\frac{1}{p'}\|\Phi-\varphi\|_{L^p(\mathbb{R}^{d-1})},
|a|^\frac{1}{p}\|\tilde{\Phi}-\tilde{\varphi}\|_{L^{p'}(\mathbb{R}^{d-1})}&<\frac{\zeta}{2M_1\operatorname{Lip}(\beta)}
\end{align*}
%and denote by $\Phi^{\varepsilon_1}_\mu, \tilde{\Phi}^{\varepsilon_1,\alpha(\mu,Q)}_\mu$ the concentrated functions
%\begin{align*}
%\Phi^{\varepsilon_1}_\mu(x) = \mu^{\frac{d-1}{p}}\Phi^{\varepsilon_1}(\mu x),\\
%\tilde{\Phi}^{\varepsilon_1,\alpha(\mu,Q)}_\mu(x)=\mu^{\frac{d-1}{p'}}\tilde{\Phi}^{\varepsilon_1,\alpha(\mu,Q)}(\mu x).
%\end{align*}
and such that the supports of $\Phi,\tilde{\Phi}$ still consist of two (or three, respectively) disjoint balls contained in $(0,1)^{d-1}.$ Notice that we still have $|\Phi|,|\tilde{\Phi}|\leq M_1.$ Now, the desired functions $\Psi_{a,\zeta,\mu,\sigma},\tilde{\Psi}_{a,\zeta,\mu,\sigma}$ are given by the concentrated functions
\begin{align*}
\Psi_{a,\zeta,\mu,\sigma}(x) &:= \mu^{\frac{d-1}{p}}\Phi(\mu x) \text{ and}\\
\tilde{\Psi}_{a,\zeta,\mu,\sigma}(x) &:= \mu^{\frac{d-1}{p'}}\tilde{\Phi}(\mu x).
\end{align*}
We abbreviate $\Psi :=\Psi_{a,\zeta,\mu,\sigma},$ $\tilde{\Psi}:=\tilde{\Psi}_{a,\zeta,\mu,\sigma}.$
We show that they have the desired properties. We clearly have $\supp(\Psi),$ $\supp(\tilde{\Psi})\subset(0,\frac{1}{\mu})^{d-1}$ by the concentration. Inequality \eqref{eq:scaling1} is obtained by the scaling property \eqref{scaling} and the bound on $\Phi$:

\begin{align*}
\|\Psi\|_{L^r(\mathbb{R}^{d-1})} =  \mu^{\frac{d-1}{p}-\frac{d-1}{r}}\|\Phi\|_{L^r(\mathbb{R}^{d-1})} =  \mu^{\frac{d-1}{p}-\frac{d-1}{r}}\|\Phi\|_{L^r([0,1]^{d-1})}\leq M_1  \mu^{\frac{d-1}{p}-\frac{d-1}{r}}.
\end{align*}
A similar estimate shows \eqref{eq:scaling5} and \eqref{eq:scaling2}.
Now, using the scaling property \eqref{scaling} and the bound on $\Phi,\tilde{\Phi}, \varphi,\tilde{\varphi}$ again and property \eqref{eq:meanvaluenuwexact},  we obtain 
\begin{align*}
\left|\int_{\mathbb{R}^{d-1}}\Psi\tilde{\Psi}\, dx - 1\right| &= \left|\int_{\mathbb{R}^{d-1}}\Psi\tilde{\Psi}-\psi\tilde \psi\, dx\right|\leq\left|\int_{\mathbb{R}^{d-1}}\Psi\tilde{\Psi}-\Psi\tilde{\psi}\, dx\right|+\left|\int_{\mathbb{R}^{d-1}}\Psi\tilde \psi-\psi\tilde \psi\, dx\right|\\
&\leq\|\Psi\|_{L^p(\mathbb{R}^{d-1})}\|\tilde{\Psi}-\tilde \psi\|_{L^{p'}(\mathbb{R}^{d-1})} + \|\tilde \psi\|_{L^{p'}(\mathbb{R}^{d-1})}\|\Psi-\psi\|_{L^p(\mathbb{R}^{d-1})}\\
&\leq M_1 \|\tilde{\Phi}-\tilde{\varphi}\|_{L^{p'}(\mathbb{R}^{d-1})} + M_1\|\Phi-\varphi\|_{L^p(\mathbb{R}^{d-1})}\\
&<\zeta
\end{align*}
which gives estimate \eqref{eq:meanvaluenuw}.
Furthermore, using the Lipschitz continuity of $\beta$ and the assumption $\beta(0) = 0,$ property \eqref{eq:meanvaluebetaexact} and  again the scaling property of the concentration,  we estimate similarly
\begin{align*}
\left|\int_{\mathbb{R}^{d-1}}\beta(|a|^\frac{1}{p}\Psi)\tilde{\Psi}\, dx-\sigma|a|^\frac{1}{p}\right|&=\left|\int_{\mathbb{R}^{d-1}}\beta(|a|^\frac{1}{p}\Psi)\tilde{\Psi}-\beta(|a|^\frac{1}{p}\psi)\tilde \psi\, dx\right|\\
&\leq\|\beta(|a|^\frac{1}{p}\Psi)\|_{L^p(\mathbb{R}^{d-1})}\|\tilde{\Psi}-\tilde \psi\|_{L^{p'}(\mathbb{R}^{d-1})}\\
&\hspace{0,1cm} +\|\tilde \psi\|_{L^{p'}(\mathbb{R}^{d-1})}\|\beta(|a|^\frac{1}{p}\Psi)-\beta(|a|^\frac{1}{p}\psi)\|_{L^p(\mathbb{R}^{d-1})}\\
\text{(since $\beta(0)= 0$)} &\leq\operatorname{Lip}(\beta)|a|^\frac{1}{p}\|\Psi\|_{L^p(\mathbb{R}^{d-1})}\|\tilde{\Psi}-\tilde{\psi}\|_{L^{p'}(\mathbb{R}^{d-1})}\\
&\hspace{0,1cm} + \operatorname{Lip}(\beta)|a|^\frac{1}{p}\|\tilde{\psi}\|_{L^{p'}(\mathbb{R}^{d-1})}\|\Psi-\psi\|_{L^p(\mathbb{R}^{d-1})}\\
\text{(using concentration property \eqref{scaling})} &\leq\operatorname{Lip}(\beta)|a|^\frac{1}{p}\|\Phi\|_{L^p(\mathbb{R}^{d-1})}\|\tilde{\Phi}-\tilde{\varphi}\|_{L^{p'}(\mathbb{R}^{d-1})}\\
&\hspace{0,1cm} + \operatorname{Lip}(\beta)|a|^\frac{1}{p}\|\tilde{\varphi}\|_{L^{p'}(\mathbb{R}^{d-1})}\|\Phi-\varphi\|_{L^p(\mathbb{R}^{d-1})}\\
&\leq M_1\operatorname{Lip}(\beta)|a|^\frac{1}{p}\left(\|\tilde{\Phi}-\tilde{\varphi}\|_{L^{p'}(\mathbb{R}^{d-1})} + \|\Phi-\varphi\|_{L^p(\mathbb{R}^{d-1})}\right)\\
&<\zeta,
\end{align*}
showing \eqref{eq:meanvaluebeta}. Condition \eqref{eq:meanvaluew-original} follows from \eqref{eq:meanvaluew}.
Finally, we show \eqref{eq:scaling3} and \eqref{eq:scaling4}. Going back to the definition of $\Phi,$ we get

\begin{align*}
\|D^m\Psi\|_{L^r(\mathbb{R}^{d-1})}&=\mu^{\frac{d-1}{p}+m-\frac{d-1}{r}}\|D^m\Phi\|_{L^r(\mathbb{R}^{d-1})}=\mu^{\frac{d-1}{p}+m-\frac{d-1}{r}}\|D^m(\eta_\ell)\ast\varphi\|_{L^r(\mathbb{R}^{d-1})}\\
&=\frac{\mu^{\frac{d-1}{p}+m-\frac{d-1}{r}}}{\ell^m}\|(D^m\eta)_\ell\ast\varphi\|_{L^r(\mathbb{R}^{d-1})}= C(a,\zeta,m)\mu^{\frac{d-1}{p}+m-\frac{d-1}{r}}
\end{align*}
since $\ell$ only depends on $a$ and $\zeta$ and because $\eta$ was a fixed mollifier. The same computation can be done for the norm  of $D^m\tilde{\Psi}$ in $L^r(\mathbb{R}^{d-1}).$ This shows \eqref{eq:scaling3} and \eqref{eq:scaling4}.
\end{proof}

\begin{proof}[Proof of Proposition \ref{prop:mikadosgeneral}]
Fix $k\in\lbrace 1,\dots, d\rbrace$, $a \neq 0$, $\sigma \in \{-1,+1\}$, $\zeta>0$ and $\mu\geq\mu_0.$ We apply Lemma \ref{lemma:construction1} and get some $\Psi_{a,\zeta,\mu,\sigma}, \tilde{\Psi}_{a,\zeta,\mu,\sigma}\in C^\infty(\mathbb{R}^{d-1}).$ We define the (non-periodic) Mikado Density $\tilde{\Theta}_{a,k,\zeta,\mu,\sigma}:\mathbb{R}^d\rightarrow\mathbb{R}$ and Mikado Field $\tilde{W}_{a,k,\zeta,\mu,\sigma}:\mathbb{R}^{d}\rightarrow\mathbb{R}^d$ as 
\begin{align*}
\tilde{\Theta}_{a,k,\zeta,\mu,\sigma}(x) &= \Psi_{a,\zeta,\mu,\sigma}(x_1,\dots,x_{k-1},x_{k+1},\dots,x_d),\\
\tilde{W}_{a,k,\zeta,\mu,\sigma}(x) &= \tilde{\Psi}_{a,\zeta,\mu,\sigma}(x_1,\dots,x_{k-1},x_{k+1},\dots,x_d)e_k.
\end{align*}
Now we define the Mikado Density $\Theta_{a,k,\zeta,\mu,\sigma}:\tor\rightarrow\mathbb{R}$ and the Mikado Field \mbox{$W_{a,k,\zeta,\mu,\sigma}:\tor\rightarrow\mathbb{R}^d$} as  the $1$-periodic extensions of $\tilde{\Theta}_{a,k,\zeta,\mu,\sigma}, \tilde{W}_{a,k,\zeta,\mu,\sigma},$ respectively.
This is well-defined because $\operatorname{supp}(\Psi_{a,\zeta,\mu,\sigma}),$ $\operatorname{supp}(\tilde{\Psi}_{a,\zeta,\mu,\sigma})\subset (0,1)^{d-1}$ and $\tilde{\Theta}_{a,k,\zeta,\mu,\sigma},$ $\tilde{W}_{a,k,\zeta,\mu,\sigma}$ are constant in direction $e_k.$
By the properties of $\Psi_{a,\zeta,\mu,\sigma}$ and $\tilde{\Psi}_{a,\zeta,\mu,\sigma},$ it is easy to check that $$\supp(\Theta_{a,k,\zeta,\mu,\sigma}),\supp(W_{a,k,\zeta,\mu,\sigma})\subset \left[\left(0,\frac{1}{\mu}\right)^{k-1}\times\mathbb{R}\times \left(0,\frac{1}{\mu}\right)^{d-k}\right]+\mathbb{Z}^d$$ and that the desired estimates are satisfied with $M_0=M_1$ from Lemma \ref{lemma:construction1}.
\end{proof}

\begin{rem}
\label{rem:sign-changing-density}
Going through the proofs of Proposition \ref{prop:mikadosgeneral} and Lemma \ref{lemma:construction1}, one can see that the densities $\Theta_{a,k,\zeta,\mu,\sigma}$ \emph{must} be chosen in such a way that they do not have a definite sign. Indeed, the function used to define $\Theta_{a,k,\zeta,\mu,\sigma}$ is  essentially the map $\varphi$ (which takes both positive and negative values) introduced in \eqref{eq:varphi}, whereas the one used to define $W_{a,k,\zeta,\mu,\sigma}$ is the map $\tilde \varphi$ introduced in \eqref{eq:varphitilde}. If $\varphi$ were always nonnegative and the renormalization function $\beta$ is, say, just $\beta(\tau) = |\tau|$, then equations \eqref{eq:meanvaluenuwexact} and \eqref{eq:meanvaluebetaexact} could be achieved simultaneously only for $\sigma = +1$, but not for $\sigma = -1$ (and, in particular, the inverse of the matrix in \eqref{eq:alpha} and the related coefficients $(\alpha_1, \alpha_2)$ would not be uniformly bounded). On the other side, we need to construct building blocks for which \eqref{eq:meanvaluebetaexact} (and correspondingly the last equation in \eqref{eq:propertieofmikadosgeneral}) holds also for $\sigma = -1$, since, in the decomposition provided by Lemma \ref{lemma:decomposition}, the first and the second row of the error matrix have different sign (for $j$ odd, for any fixed $x$).
%
%is decomposed with terms of the form $(-1)^{j+1} g_j$ which are alternatively positive or negative.
%
%
%
%For symmetry reasons, one could then try a different choice of $\varphi, \tilde \varphi$, 
%\begin{equation}
%\label{eq:varphi-different-choice}
%\begin{aligned}
%\varphi & = \chi_{B_{\frac{1}{8}}(P_1)} + \chi_{B_{\frac{1}{8}}(P_2)},
%\tilde{\varphi} & \alpha_1\chi_{B_{\frac{1}{8}}(P_1)} - \alpha_2\chi_{B_{\frac{1}{8}}(P_2)} + \alpha_3 \chi_{B_{\frac{1}{8}}(P_3)}
%\end{aligned}
%\end{equation}
%in order to have $\varphi \geq 0$ and $\tilde \varphi$ with no definite sign. However the choice in \eqref{eq:varphi-different-choice} does not work
%
%\blue{add remark on the fact that we need a sign changing density!}
\end{rem}

\section{The perturbations}
\label{sec:perturbations-main-section}

We start  in this section the proof of the Main Proposition \ref{prop:main}. We define first of all the constant $M$ by
\begin{equation}
\label{eq:constant-m}
M:= M_0\max\lbrace (3d)^\frac{1}{p}, (3d)^\frac{1}{p'}\rbrace,
\end{equation}
where $M_0$ is the constant appearing in the statement of Proposition \ref{prop:mikadosgeneral}. Observe that $M_0$ (and thus $M$) depends only on the renormalization map $\beta$, and on nothing else. We assume then that $p,\tilde p$, $h_0, h^*$, $(\rho_0, u_0, R_0)$ and $\delta$ are given as in the statement of Proposition \ref{prop:main}, and we define in Section \ref{sec:perturbations} (see in particular \eqref{eq:perturbations-1}, \eqref{eq:perturbations-2}, \eqref{eq:perturbations-3}) the new density $\rho_1$ and the new vector field $u_1$. In Section \ref{ss:estimates-perturbation} we prove some estimates for $\rho_1,u_1$. The new defect $R_1$ will then be defined subsequently, in Section \ref{sec:defect}. 

\subsection{Definition of the perturbations}\label{sec:perturbations}
In this section, we are going to define the perturbations that will lead to $\rho_1$ and $u_1$ in \mbox{Proposition \ref{prop:main}.} For this definition, we will introduce in the following several parameters. They are (in the order in which they will appear in the subsections below)
\begin{align*}
\varepsilon>0 \text{ with } \frac{1}{\varepsilon}\in\mathbb{N}&: \text{ fineness of the decomposition of } \tor \text{ into cubes } Q\in\mathcal{Q}, && \text{Section \ref{ss:partition}} \\
\zeta>0&: \text{ approximation parameter for the mean values of the Mikados}, && \text{Section \ref{ss:choice-mikado}} \\
\mu \gg 1&:\text{ concentration}, && \text{Section \ref{ss:choice-mikado}} \\
\alpha\in (0,1)&:\text{ approximation parameter for the cutoff functions}, && \text{Section \ref{ss:cutoffs}}  \\
\lambda \in \frac{1}{\varepsilon}\mathbb{N}&: \text{ oscillation}, && \text{Section \ref{ss:def-perturbations}}.
\end{align*}
The values of these parameters will be fixed in Section \ref{sec:proofofall}. Note that $\delta>0$ is no parameter since it is already fixed by the assumptions of Proposition \ref{prop:main}.

\subsubsection{Partition into cubes of edge $\e$}
\label{ss:partition}

%We will now construct the Mikado Fields and Densities that we are going to use in our convex integration scheme by means of Proposition \ref{prop:mikadosgeneral}. Assume we are given smooth functions $(\rho_0,u_0,R_0),$ $h_0\in C^\infty(\tor)$ and a positive constant $\delta$ as in Proposition \ref{prop:main}, with $(\rho_0,u_0,R_0)$ satisfying
%\begin{equation*}
%\begin{cases}
%\dv(u_0)&=0,\\
%\dv(\rho_0 u_0)&=-\dv R_0^1,\\
%\dv(\beta(\rho_0)u_0-h_0)&=-\dv R_0^2.
%\end{cases}
%\end{equation*}
We decompose $R_0$ with Lemma \ref{lemma:decomposition}, i.e. we consider the representation
\begin{equation*}
R(x)=\sum_{j=1}^{2d}\begin{pmatrix}
g_j(x)\\(-1)^{j+1}g_j(x)
\end{pmatrix}
\otimes z_j
\end{equation*}
and fix $g_j,$ $z_j$ given by that Lemma. Now, let $\varepsilon>0$ with $\frac{1}{\varepsilon}\in\mathbb{N}$. As we have already mentioned, the value of $\e$ will be fixed in Section \ref{sec:proofofall}. We divide $\tor$ into  $(\frac{1}{\varepsilon})^d$ cubes of side length $\varepsilon$ and denote by $\mathcal{Q}$ the collection of these cubes. For each $Q\in\mathcal{Q}$ and $j\in\lbrace 1,\dots, 2d\rbrace,$ we define 
\begin{equation}\label{eq:meanvaluecubes}
g_j^Q =\frac{1}{|Q|}\int_{Q}g_j(x)\, dx.
\end{equation}
Note that we have, by Lemma \ref{lemma:decomposition},
\begin{align}
|g_j^Q|&\leq\|R_0\|_{L^\infty(\tor)},\label{eq:gjqinlinf}\\
|g_j^Q|&\leq \frac{1}{\varepsilon^d} \|R_0\|_{L^1(Q)}\text{ and}\label{eq:gjqbetrag}\\
\sum_{Q\in\mathcal{Q}}\|\mathbb{1}_Qg_j^Q\|_{L^1(Q)}& = \|g_j\|_{L^1(\tor)}\leq \|R_0\|_{L^1(\tor)},\label{eq:gjqsumme}
\end{align}
%We also fix the constant $S:=\max\limits_{\substack{j=1,\dots,4d\\ Q\in\mathcal{Q}}} |g_j^Q|.$
where $\mathbb{1}_Q$ is the characteristic function of the cube $Q.$  We also fix the constant $$S:=\min\left\lbrace |g_j^Q|: 1\leq j\leq 2d, Q\in\mathcal{Q} \text{ with } g_j^Q\neq 0\right\rbrace.$$

\subsubsection{Choice of Mikado, for each cube $Q \in \mathcal{Q}$}
\label{ss:choice-mikado}

Let $\mu_0>0$ be such that
\begin{align}\label{eq:growthconditionagain}
D|\tau|\leq\beta(\tau)\leq C|\tau| \text{ for all } |\tau|\geq S^\frac{1}{p}\mu_0^\frac{d-1}{p}
\end{align} 
(recall \eqref{grwothcondition}). 
%Let $\delta>0$ be the given parameter from Proposition \ref{prop:main}  and let $\varepsilon>0$ be small with $\frac{1}{\varepsilon}\in\mathbb{N}.$ We divide $\mathbb{T}^d$ into $(\frac{1}{\varepsilon})^d$ cubes of side length $\varepsilon.$ Let $\mathcal{Q}$ be the collection of the cubes. 
%We have to choose $\varepsilon=\varepsilon(\delta)$ so small such that in each of these cubes $Q$ there exists for every $j\in\lbrace 1,\dots,4d\rbrace$ numbers $g_j^Q\in\mathbb{R}$ such that $$\|g_j-\sum_{Q\in\mathcal{Q}}\mathbb{1}_Qg_j^Q\|_{L^1([0,1]^d)}<\frac{\delta}{32d},$$\\
%where $\mathbb{1}_Q$ is the characteristic function of the cube $Q.$
Now let $\zeta>0$ and $\mu \geq \mu_0$. As we have already mentioned, the value of $\zeta$ will be fixed in Section \ref{sec:proofofall}. 

We now define for each pair $(Q,j)\in \mathcal{Q}\times\lbrace 1,\dots,2d\rbrace$ a Mikado Density and a Mikado Field $\Theta^{Q,j} \in C^\infty(\tor)$ and  Mikado Field $W^{Q,j} \in C^\infty(\tor;\mathbb{R}^d)$ (depending on $\zeta,\mu$) as follows. Let $(Q,j)\in \mathcal{Q}\times\lbrace 1,\dots,2d\rbrace$ and let $k\in\lbrace 1,\dots, d\rbrace$ such that $z_j = e_k.$ In case of $g_j^Q=0$, we define
\begin{align*}
\Theta^{Q,j} = 0,\\
W^{Q,j} = 0.
\end{align*}
Otherwise, we set $a=g_j^Q,$ $\sigma=(-1)^{j+1}$ and apply Proposition \ref{prop:mikadosgeneral} with $a,$ $k,$ $\zeta,$ $\mu$ and $\sigma$ to get
\begin{align*}
\Theta^{Q,j} := \Theta_{a,k,\zeta,\mu,\sigma},\\
W^{Q,j} := W_{a,k,\zeta,\mu,\sigma}.
\end{align*}
By an abuse of notation, we translate the functions $\Theta^{Q,j}, W^{Q,j}$ such that $\supp(\Theta^{Q,j})=\supp(W^{Q,j})$ and $\supp(\Theta^{Q,j})\cap \supp(W^{Q,i}) = \emptyset$ for $i\neq j.$

\smallskip

We collect some properties of the building blocks for later reference, which are just \emph{copy and paste} from Proposition \ref{prop:mikadosgeneral}.

\begin{prop}\label{prop:defofmikados}
The following holds.
%Let $\zeta>0,$ $\varepsilon>0$ with $\frac{1}{\varepsilon}\in\mathbb{N}$ and let $\mathcal{Q}$ be a partition of $\tor$ into cubes of side length $\varepsilon.$ Let $(g^Q_j)_{Q\in\mathcal{Q}, j\in\lbrace 1,\dots, 2d\rbrace}$ be defined as in \eqref{eq:meanvaluecubes} and let $\mu_0$ be defined by \eqref{eq:growthconditionagain}.  For every pair $(Q,j)\in\mathcal{Q}\times\lbrace 1,\dots, 2d\rbrace$ and each $\mu\geq\mu_0,$  we have a Mikado Density
%$\Theta^{Q,j}\in C^\infty(\tor)$ and  a Mikado Field  $W^{Q,j}\in C^\infty(\tor;\mathbb{R}^d)$ satisfying the following properties:
\begin{equation}\label{eq:propertieofmikados}
\begin{cases}
\dv W^{Q,j} &= 0,\\
\dv \Theta^{Q,j}W^{Q,j} &= 0,\\
\int_{\mathbb{T}^{d}}W^{Q,j}\, dx & = 0,\\
\left|\int_{\mathbb{T}^{d}}\beta(|g_j^Q|^\frac{1}{p}\Theta^{Q,j})W^{Q,j}\, dx - (-1)^{j+1}|g_j^Q|^\frac{1}{p}z_j\right| &<\zeta
\end{cases}
\end{equation}
and in case of $g_j^Q\neq 0$ also
\begin{equation}\label{eq:meanvalueproperty}
\left|\int_{\mathbb{T}^{d}}\Theta^{Q,j}W^{Q,j}\, dx - z_j \right|< \zeta.
\end{equation}
It holds for all $r\in [1,\infty]$
\begin{align}
\|\Theta^{Q,j}\|_{L^r(\tor)}&\leq M_0\mu^{\frac{d-1}{p}-\frac{d-1}{r}},\label{eq:thetainlr}\\
\|W^{Q,j}\|_{L^r(\tor)}&\leq M_0\mu^{\frac{d-1}{p'}-\frac{d-1}{r}}\text{ and}\label{eq:winlr}\\
\|\Theta^{Q,j}\W\|_{L^r(\tor)}&\leq M_0^2\mu^{d-1-\frac{d-1}{r}}
\end{align}
where $M_0$ is the constant in the statement of Proposition \ref{prop:mikadosgeneral}. 
%for some constant $M_0$ independent of $Q, j, \mu,\zeta$ and $\varepsilon.$ 
Furthermore, it holds for all $m\in\mathbb{N}$ and $r\in\lbrack 1,\infty\rbrack$
\begin{align*}
\|D^m\Theta^{Q,j}\|_{L^r(\mathbb{T}^d)}&\leq C(\varepsilon,\zeta,\|R_0\|_{L^1(\tor)},m)\mu^{\frac{d-1}{p}+m-\frac{d-1}{r}},\\
\|D^mW^{Q,j}\|_{L^r(\mathbb{T}^d)}&\leq C(\varepsilon,\zeta,\|R_0\|_{L^1(\tor)},m) \mu^{\frac{d-1}{p'}+m-\frac{d-1}{r}}.
\end{align*}
In particular, we have that
\begin{align}
\|\Theta^{Q,j}\|_{L^p(\tor)} &\leq M_0,\label{eq:thetainlp}\\
\|W^{Q,j}\|_{L^{p'}(\tor)}&\leq M_0,\label{eq:winlp'}\\
\|\Theta^{Q,j}\W\|_{L^1(\tor)}&\leq M_0^2\text{ and}\label{eq:thetawproduct}\\
\|DW^{Q,j}\|_{L^{\tilde{p}}(\tor)}&\leq C(\varepsilon,\zeta,\|R_0\|_{L^1(\tor)})\mu^{-\gamma}\label{eq:dwlptilde}
\end{align}
with $\gamma = \frac{d-1}{\tilde{p}}-\frac{d-1}{p'}-1>0$ because of \eqref{eq:d-1}.
Also,  we have that $\operatorname{supp}(\Theta^{Q,j})=\operatorname{supp}(W^{Q,j})$ and $\Theta_{\mu}^{Q,j}, W_\mu^{Q,i}$ only have non-disjoint support if $i=j.$
\end{prop}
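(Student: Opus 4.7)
The plan is to directly invoke Proposition \ref{prop:mikadosgeneral} for each pair $(Q, j) \in \mathcal{Q} \times \{1, \dots, 2d\}$ after verifying its hypotheses for the chosen parameters. For $(Q, j)$ with $g_j^Q = 0$, the functions $\Theta^{Q,j} = W^{Q,j} = 0$ trivially satisfy all divergence and mean-value identities in \eqref{eq:propertieofmikados}; the fourth identity reduces to $|0 - 0| < \zeta$ thanks to $\beta(0) = 0$, and all norms vanish. Note that \eqref{eq:meanvalueproperty} is not claimed in this degenerate case, consistently with the statement.

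For $(Q, j)$ with $g_j^Q \neq 0$, I set $a = g_j^Q$, $k$ the unique index with $z_j = e_k$, and $\sigma = (-1)^{j+1}$. The first thing I must check is that the $\mu_0$ fixed via \eqref{eq:growthconditionagain} is admissible as "$\mu_0(a)$" in Proposition \ref{prop:mikadosgeneral}: since $|g_j^Q| \geq S$, we have $|a|^{1/p}\mu_0^{(d-1)/p} \geq S^{1/p} \mu_0^{(d-1)/p}$, so the growth condition \eqref{grwothcondition} holds above the required threshold. Therefore Proposition \ref{prop:mikadosgeneral} produces $\Theta^{Q,j} := \Theta_{a,k,\zeta,\mu,\sigma}$ and $W^{Q,j} := W_{a,k,\zeta,\mu,\sigma}$, and properties \eqref{eq:propertieofmikados}, \eqref{eq:meanvalueproperty} are direct transcriptions of \eqref{eq:propertieofmikadosgeneral}; the $L^r$-bounds \eqref{eq:thetainlr}--\eqref{eq:winlr} and \eqref{eq:thetainlp}--\eqref{eq:thetawproduct} then follow verbatim with the universal constant $M_0$.

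The only nontrivial bookkeeping concerns the derivative bounds, where Proposition \ref{prop:mikadosgeneral} supplies constants of the form $C(a, \zeta, m)$ depending on $a = g_j^Q$. By \eqref{eq:gjqbetrag} one has $|g_j^Q| \leq \varepsilon^{-d}\|R_0\|_{L^1(\tor)}$, so the constants can be taken uniformly over the finite family of cubes $Q$ and indices $j$ and rewritten as $C(\varepsilon, \zeta, \|R_0\|_{L^1(\tor)}, m)$. The bound \eqref{eq:dwlptilde} is then immediate from \eqref{eq:dwlptildegeneral}, with $\gamma = (d-1)/\tilde p - (d-1)/p' - 1 > 0$ by \eqref{eq:d-1}.

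Finally, to enforce the support conditions I exploit translation invariance. By Proposition \ref{prop:mikadosgeneral}, the supports of $\Theta^{Q,j}$ and $W^{Q,j}$ lie in thin tubes $[(0, 1/\mu)^{k-1} \times \R \times (0, 1/\mu)^{d-k}] + \Z^d$, consisting (for a fixed $j$) of finitely many cylinders of transverse diameter $1/\mu$. Translating each pair $(\Theta^{Q,j}, W^{Q,j})$ by a constant vector in the $(d-1)$ directions transverse to $e_k$ preserves all integral identities and all $L^r$-norms, so for $\mu$ large we can place the $2d$ tubes at pairwise disjoint transverse positions and align $\supp(\Theta^{Q,j})$ with $\supp(W^{Q,j})$ within each $j$ (in the loose sense of the author's "abuse of notation", since a priori the first consists of two tubes and the second of three). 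The substance of the argument is entirely contained in Proposition \ref{prop:mikadosgeneral}; the present proof is essentially a parameter-choice lemma, and the only real care required is the uniformity of the derivative constants.
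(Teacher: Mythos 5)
Your proposal is correct and takes exactly the approach the paper intends: Proposition \ref{prop:defofmikados} is declared in the text to be ``just \emph{copy and paste} from Proposition \ref{prop:mikadosgeneral}'', and you carry out that transcription, correctly filling in the two points the paper leaves implicit (that the single $\mu_0$ fixed via \eqref{eq:growthconditionagain} serves as $\mu_0(g_j^Q)$ for every nonzero $g_j^Q$, since $|g_j^Q|\geq S$, and that the $a$-dependent derivative constants become $C(\varepsilon,\zeta,\|R_0\|_{L^1},m)$ by the bound \eqref{eq:gjqbetrag} and finiteness of $\mathcal{Q}$). Your remark that ``$\supp(\Theta^{Q,j})=\supp(W^{Q,j})$'' is only true in the paper's loose sense of translated tubes is accurate and does not affect the argument.
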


\subsubsection{Choice of cutoffs, for each cube $Q \in \mathcal{Q}$}
\label{ss:cutoffs}

Let now $\alpha \in (0,1)$. As we have already mentioned, its  value will be fixed in Section \ref{sec:proofofall}. For each cube $Q\in\mathcal{Q},$ we define cutoffs $\psi_{Q}, \chi_Q\in C^\infty(\tor)$ with the following properties. For $x\in Q,$ we prescribe
\begin{align*}
\chi_Q(x)=1 \text{ if } \operatorname{dist}(x,\partial Q)>\alpha\varepsilon \text{ and } \chi_Q(x)=0 \text{ if } \operatorname{dist}(x,\partial Q)<\frac{\alpha}{2}\varepsilon,\\
\psi_Q(x)=1 \text{ if } \operatorname{dist}(x,\partial Q)>\frac{\alpha}{2}\varepsilon \text{ and }\psi_Q(x)=0 \text{ if } \operatorname{dist}(x,\partial Q)<\frac{\alpha}{4}\varepsilon,
\end{align*}
and $\chi_Q(x)=\psi_Q(x) =0$ for $x\notin Q.$
 Note that this implies
\begin{align}
\nabla \psi_Q&\neq 0 \Rightarrow \chi_Q=0 \text{ and}\nonumber\\
\nabla \chi_Q&\neq 0 \Rightarrow \psi_Q=1.\label{cutoffs}
\end{align}
We will oftentimes use that
\begin{align}
\sum_{Q\in\mathcal{Q}}\|\psi_Q\|_{C^m(\tor)},\sum_{Q\in\mathcal{Q}}\|\chi_Q\|_{C^m(\tor)},\sum_{Q\in\mathcal{Q}} \|\psi_Q\chi_Q\|_{C^m(\tor)}&\leq C(\varepsilon,\alpha, m).\label{eq:cutoffest}
\end{align} 

\subsubsection{Definition of perturbations $\nu,w$ and correctors $\nu_c, w_c$}
\label{ss:def-perturbations}

Let now $\lambda \in \frac{1}{\e} \N$. As we have already mentioned, its value will be fixed in Section \ref{sec:proofofall}. We set
\begin{align}
\label{eq:perturbations-1}
\rho_1 = \rho_0 + \nu + \nu_c,\hspace{0,3cm} u_1 = u_0 +w + w_c
\end{align}
where $\nu,\nu_c,w,w_c$ are defined as follows. With the Mikado Densities and Fields from \mbox{Proposition \ref{prop:defofmikados},} we set
\begin{equation}
\label{eq:perturbations-2}
\begin{aligned}
\nu(x) &=\sum_{j=1}^{2d}\sum_{Q\in\mathcal{Q}}\psi_Q(x)|g_j^Q|^\frac{1}{p}\Theta^{Q,j}(\lambda x), \\
\nu_c &= - \int_{\tor}\nu(x)\, dx,\\
w(x)&=\sum_{j=1}^{2d}\sum_{Q\in\mathcal{Q}}\chi_Q(x)\sign(g_j^Q)|g_j^Q|^\frac{1}{p'}W^{Q,j}(\lambda x).
\end{aligned}
\end{equation}
We will sometimes abbreviate 
\begin{align*}
\Theta^{Q,j}(\lambda \, \cdot) = \left(\Theta^{Q,j}\right)_\lambda,\hspace{0,5cm}W^{Q,j}(\lambda \, \cdot) = \left(W^{Q,j}\right)_\lambda.
\end{align*}
In order to define the corrector $w_c,$ we first calculate
$$-\dv w = -\jQsum \sign(g_j^Q)|g_j^Q|^\frac{1}{p'}\nabla\chi_Q\cdot\left(W^{Q,j}\right)_\lambda.$$
Since $\W$ has zero mean value, we can apply Lemma \ref{lemma:antidiv} and define 
\begin{equation}
\label{eq:perturbations-3}
w_c = -\jQsum  \sign(g_j^Q)|g_j^Q|^\frac{1}{p'}\mathcal{R}\left(\nabla\chi_Q\cdot\left(W^{Q,j}\right)_\lambda\right).
\end{equation}
This gives $\dv(w+w_c)=0$ and thus $\dv(u_1) = \dv(u_0) + \dv(w+w_c)=0.$

\subsection{Estimates for the perturbations}
\label{ss:estimates-perturbation}
We estimate now the size of the perturbations in the relevant norms. Recall that $M_0$ is the constant from Proposition \ref{prop:mikadosgeneral} and Proposition \ref{prop:defofmikados}. 
%Let $M_0>0$ be the constant from Proposition \ref{prop:defofmikados}, depending only on $\beta.$
\begin{lemma}\label{lemma:nulp}
It holds
\begin{equation*}
\|\nu\|^p_{L^p(\tor)}\leq 2dM_0^p\|R_0\|_{L^1(\tor)}+\frac{C(\varepsilon,\alpha,\|R_0\|_{L^1(\tor)})}{\lambda}.
\end{equation*}
\end{lemma}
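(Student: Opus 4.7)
My plan is to exploit the disjointness of the supports of the summands in $\nu$ to reduce to a diagonal sum, then on each piece to separate the slow cutoff from the fast Mikado by a Riemann--Lebesgue estimate with rate $O(1/\lambda)$, and finally to collect everything using the bounds on $g_j^Q$ from \eqref{eq:gjqbetrag}--\eqref{eq:gjqsumme}. Indeed, distinct cubes $Q\in\mathcal{Q}$ are disjoint up to their measure-zero boundaries, and within any fixed $Q$ the supports of $\Theta^{Q,j}$ and $\Theta^{Q,i}$ are disjoint for $i\neq j$ by the construction in Proposition \ref{prop:defofmikados}. Consequently the maps $\psi_Q(\cdot)|g_j^Q|^{1/p}\Theta^{Q,j}(\lambda\,\cdot)$ have pairwise disjoint supports and pointwise
\begin{equation*}
|\nu(x)|^p=\sum_{j=1}^{2d}\sum_{Q\in\mathcal{Q}}\psi_Q(x)^p\,|g_j^Q|\,|\Theta^{Q,j}(\lambda x)|^p,
\end{equation*}
which reduces the proof to controlling each integral $\int_\tor\psi_Q^p(x)|\Theta^{Q,j}(\lambda x)|^p\,dx$.

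Next, on each such integral I would apply the scale-separation identity
\begin{equation*}
\int_\tor\psi_Q^p(x)\,|\Theta^{Q,j}(\lambda x)|^p\,dx
=\Bigl(\int_\tor\psi_Q^p\Bigr)\Bigl(\int_\tor|\Theta^{Q,j}|^p\Bigr)+E_{Q,j},\qquad|E_{Q,j}|\leq\frac{C(\varepsilon,\alpha,\mu,\zeta)}{\lambda},
\end{equation*}
which I would derive by setting $G:=|\Theta^{Q,j}|^p-\int_\tor|\Theta^{Q,j}|^p$ (zero mean, and $C^1$ since $p>1$), finding a periodic $H$ with $\dv H=G$ by standard elliptic inversion on $\tor$, and integrating by parts through the identity $G(\lambda x)=\lambda^{-1}\dv_x H(\lambda x)$ against $\psi_Q^p$. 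The resulting constant picks up $\|\nabla\psi_Q^p\|_{L^\infty}$ (depending on $\varepsilon,\alpha$) and $\|H\|_{L^1}$ (depending on $\mu,\zeta$); the latter is harmless since $\lambda$ is chosen last in Section \ref{sec:proofofall}, so the $\mu,\zeta$ dependence can be absorbed into the advertised constant $C(\varepsilon,\alpha,\|R_0\|_{L^1(\tor)})$.

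To close, the main term is bounded using $0\leq\psi_Q\leq\mathbb{1}_Q$ (so $\int\psi_Q^p\leq\varepsilon^d$), the estimate $\|\Theta^{Q,j}\|_{L^p(\tor)}^p\leq M_0^p$ from \eqref{eq:thetainlp}, and \eqref{eq:gjqsumme}:
\begin{equation*}
\sum_{j=1}^{2d}\sum_{Q\in\mathcal{Q}}\varepsilon^d\,|g_j^Q|\,M_0^p
=M_0^p\sum_{j=1}^{2d}\|g_j\|_{L^1(\tor)}\leq 2dM_0^p\|R_0\|_{L^1(\tor)},
\end{equation*}
which matches the leading term of the claim. For the error contribution, \eqref{eq:gjqbetrag} gives $\sum_Q|g_j^Q|\leq\varepsilon^{-d}\|R_0\|_{L^1(\tor)}$, hence $\sum_{j,Q}|g_j^Q|\leq 2d\varepsilon^{-d}\|R_0\|_{L^1(\tor)}$, and the total oscillation contribution is at most $C(\varepsilon,\alpha,\|R_0\|_{L^1(\tor)})/\lambda$, as required.

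The only mildly delicate ingredient is the scale-separation estimate when $p$ is not an integer, since then $|\Theta^{Q,j}|^p$ need not be smooth at zeros of $\Theta^{Q,j}$. The hypothesis $p>1$ yields $|\cdot|^p\in C^1$, which is precisely enough regularity to run the one integration by parts required; everything else is straightforward bookkeeping.
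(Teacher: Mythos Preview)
Your diagonalisation via disjoint supports and the bookkeeping for the leading term $2dM_0^p\|R_0\|_{L^1}$ are correct and match the paper. The gap is in the scale-separation step. You bound $E_{Q,j}$ by $\lambda^{-1}\|\nabla(\psi_Q^p)\|_{L^\infty}\|H\|_{L^1}$, concede that $\|H\|_{L^1}$ depends on $\mu$, and then dismiss this as ``harmless since $\lambda$ is chosen last in Section~\ref{sec:proofofall}''. That reasoning is wrong: in Section~\ref{sec:proofofall} the parameters are coupled by $\mu=\lambda^c$ with $c>1$, so $\mu$ is \emph{not} fixed before $\lambda$, and any growth of $\|H\|_{L^1}$ in $\mu$ competes directly against the $\lambda^{-1}$ gain. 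With the generic antidivergence $H=\nabla\Delta^{-1}G$ for $G=|\Theta^{Q,j}|^p-\int|\Theta^{Q,j}|^p$ there is no immediate bound on $\|H\|_{L^1}$ independent of $\mu$: Calder\'on--Zygmund fails at $L^1$, and $\|G\|_{L^r}$ grows like $\mu^{(d-1)(1-1/r)}$ for every $r>1$. The lemma asserts a constant $C(\varepsilon,\alpha,\|R_0\|_{L^1})$ with no $\mu$-dependence, and your argument as written does not deliver that.

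The paper avoids this by the ``improved H\"older'' mean-value route rather than integration by parts: partition $\tor$ into cubes $\tilde Q_0$ of edge $1/\lambda$, replace the slow factor by its average on each $\tilde Q_0$ at cost $O(\lambda^{-1})\|f^j\|_{C^1}^p$, and use exact periodicity to compute $\int_{\tilde Q_0}|\Theta^{Q_0,j}(\lambda\,\cdot)|^p=\lambda^{-d}\|\Theta^{Q_0,j}\|_{L^p}^p$. The only building-block norm that ever appears is $\|\Theta^{Q,j}\|_{L^p}\leq M_0$ from \eqref{eq:thetainlp}, which is uniform in $\mu$ by design, so the error constant is genuinely $C(\varepsilon,\alpha,\|R_0\|_{L^1})$. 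Your approach can be repaired by switching to this argument (or by proving a $\mu$-uniform bound on a suitable antidivergence of $G$, which is plausible but not free), but the absorption claim as stated does not hold.
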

\begin{proof}
%We have by the improved Hölder inequality
%\begin{align*}
%\|\nu\|_{L^p(\tor)}&\leq\sum_{j=1}^{4d}\sum_{Q\in\mathcal{Q}}\|\psi_Q|g_j^Q|^\frac{1}{p}\sign(g_j^Q)\|_{L^p(\tor)}\|\Theta^{Q,j}\|_{L^p(\tor)}\\
%&+\sum_{j=1}^{4d}\sum_{Q\in\mathcal{Q}}\frac{C_p}{\lambda^\frac{1}{p}}\|\psi_Q|g_j^Q|^\frac{1}{p}\sign(g_j^Q)\|_{C^1(\tor)}\|\Theta^{Q,j}\|_{L^p(\tor)}.
%\end{align*}
%The first double sum can be estimated with \eqref{eq:thetainlp}, using that $\supp(\psi_Q)\subset Q$ and the estimate for $g_j^Q$ by 
%\begin{align*}
%\sum_{j=1}^{4d}\sum_{Q\in\mathcal{Q}}\|\psi_Q|g_j^Q|^\frac{1}{p}\sign(g_j^Q)\|_{L^p(\tor)}\|\Theta^{Q,j}\|_{L^p(\tor)}&\leq M\sum_{j=1}^{4d}\sum_{Q\in\mathcal{Q}}\|g_j^Q\|_{L^1(Q)}^\frac{1}{p}\\
%&\leq 4dM (\|R_0\|_{L^1(\tor)}+\sigma)^\frac{1}{p}.
%\end{align*}
%By \eqref{eq:cutoffest}, we have $\|\psi_Q|g_j^Q|^\frac{1}{p}\sign(g_j^Q)\|_{C^1(\tor)}\leq C(\varepsilon,\sigma,R_0).$ Combined with \eqref{eq:thetainlp}, the second double sum is thus estimated by
%\begin{align*}
%\sum_{j=1}^{4d}\sum_{Q\in\mathcal{Q}}\frac{C_p}{\lambda^\frac{1}{p}}\|\psi_Q|g_j^Q|^\frac{1}{p}\sign(g_j^Q)\|_{C^1(\tor)}\|\Theta^{Q,j}\|_{L^p(\tor)}&\leq \frac{MC_p}{\lambda^\frac{1}{p}}\sum_{j=1}^{4d}\sum_{Q\in\mathcal{Q}}\|\psi_Q|g_j^Q|^\frac{1}{p}\sign(g_j^Q)\|_{C^1(\tor)}\\
%&\leq \frac{4dMC_pC(\varepsilon,\sigma,R_0)}{\varepsilon^d\lambda^\frac{1}{p}}= \frac{C(\varepsilon,\sigma,R_0)}{\lambda^\frac{1}{p}}.
%\end{align*}
%AND NOW THE REAL PROOF
First note that we have
\begin{align*}
\|\nu\|_{L^p(\tor)}^p &=\int_{\tor}\jQsum |\psi_Q(x)|g_j^Q|^\frac{1}{p}|^p|\Theta^{Q,j}(\lambda x)|^p\, dx\\
&= \sum_{j=1}^{2d}\int_{\tor}\left|\sum_{Q\in\mathcal{Q}}\psi_Q(x)|g_j^Q|^\frac{1}{p}\right|^p \left|\sum_{Q\in\mathcal{Q}}\mathbb{1}_Q(x)\Theta^{Q,j}(\lambda x)\right|^p \, dx = \sum_{j=1}^{2d}\int_{\tor} |f^j(x)|^p|h^j(x)|^p\, dx 
\end{align*}
with
\begin{align*}
f^j(x) &:=\sum_{Q\in\mathcal{Q}}\psi_Q(x)|g_j^Q|^\frac{1}{p},\\
h^j(x) &:=\sum_{Q\in\mathcal{Q}}\mathbb{1}_Q(x)\Theta^{Q,j}(\lambda x).
\end{align*}
Fix $j\in\lbrace 1,\dots, 2d\rbrace.$ Let us divide $\tor$ into $\frac{1}{\lambda}$ cubes of edge $\frac{1}{\lambda}.$ Since, by assumption, $\lambda \geq \frac{1}{\varepsilon},$  for each such cube $\tilde{Q}_0$ with edge length $\frac{1}{\lambda}$ there is a "super cube" $Q_0\in\mathcal{Q}$ of edge $\varepsilon$ such that $\tilde{Q}_0\subset Q_0.$ On each cube $\tilde{Q}_0$ of edge $\frac{1}{\lambda}$ with super cube $Q_0\in\mathcal{Q},$ we have
\begin{align}\label{eq:hoeldergeneralised}
\int_{\tilde{Q}_0}|f^j(x)|^p|h^j(x)|^p \,dx&=\int_{\tilde{Q}_0}\left[|f^j(x)|^p - \left(\frac{1}{|\tilde{Q}_0|}\int_{\tilde{Q}_0}|f^j(y)|^p\, dy\right)\right] |h^j(x)|^p\, dx\nonumber\\
&\hspace{0,3cm} +\frac{1}{|\tilde{Q}_0|}\int_{\tilde{Q}_0}|f^j(y)|^p\, dy\int_{\tilde{Q}_0} |h^j(x)|^p \,dx.
\end{align}
We take the first summand and sum over all cubes $\tilde{Q}_0$ of edge $\frac{1}{\lambda}$ and use that for all $x,y$ in the same cube $\tilde{Q}_0$ it holds $\left||f(x)|^p-|f(y)|^p\right|\leq \frac{C_p}{\lambda}\|f\|_{C^1(\tor)}^p.$ We obtain
\begin{align*}
&\sum_{\tilde{Q}_0}\int_{\tilde{Q}_0}\left[|f^j(x)|^p - \left(\frac{1}{|\tilde{Q}_0|}\int_{\tilde{Q}_0}|f^j(y)|^p\, dy\right)\right] |h^j(x)|^p\, dx\\
&\hspace{0,3cm}\leq \frac{C_p}{\lambda} \|f^j\|^p_{C^1(\tor)}\sum_{\tilde{Q}_0}\int_{\tilde{Q}_0}|h^j(x)|^p\, dx\\
%&\hspace{0,3cm} = \frac{C_p}{\lambda}\|f^j\|^p_{C^1(\tor)}\sum_{\tilde{Q}_0}\int_{\tilde{Q}_0}\mathbb{1}_{Q_0}|\Theta^{Q_0,j}_\mu(\lambda x)|^p dx\\
%&\hspace{0,3cm}= \frac{C_p}{\lambda}\|f^j\|^p_{C^1(\tor)}\sum_{Q\in\mathcal{Q}}\|\Theta^{Q,j}(\lambda x)\|^p_{L^p(Q)}\\
&\hspace{0,3cm}= \frac{C_p}{\lambda}\|f^j\|^p_{C^1(\tor)} \|h^j\|^p_{L^p(\tor)}.
\end{align*} 
For the second term in \eqref{eq:hoeldergeneralised}, we estimate using \eqref{eq:thetainlp}
\begin{align*}
&\frac{1}{|\tilde{Q}_0|}\int_{\tilde{Q}_0}|f^j(y)|^p\, dy\int_{\tilde{Q}_0}|h^j(x)|^p\, dx\\
&\hspace{0,3cm}=\frac{1}{|\tilde{Q}_0|}\int_{\tilde{Q}_0}|f^j(y)|^p\, dy \int_{\tilde{Q}_0}\mathbb{1}_{Q_0}(x)|\Theta^{Q_0,j}(\lambda x)|^p\, dx\\
&\hspace{0,3cm}=\frac{1}{|\tilde{Q}_0|}\int_{\tilde{Q}_0}|f^j(y)|^p\, dy\,\frac{1}{\lambda^d}\int_{\tor}|\Theta^{Q_0,j}(\lambda x)|^p\, dx\\
&\hspace{0,3cm}=\|f^j\|^p_{L^p(\tilde{Q}_0)} \|\Theta^{Q_0,j}\|_{L^p(\tor)}^p\\
&\hspace{0,3cm}\leq M_0^p\|f^j\|_{L^p(\tilde{Q}_0)}^p.
\end{align*}
Summing over all cubes $\tilde{Q}_0$ of edge $\frac{1}{\lambda},$ we obtain
\begin{align*}
\sum_{\tilde{Q}_0}\frac{1}{|\tilde{Q}_0|}\int_{\tilde{Q}_0}|f^j(y)|^p\, dy\int_{\tilde{Q}_0} |h^j(x)|^p\, dx\leq M_0^p\|f^j\|_{L^p(\tor)}^p.
\end{align*}
Together, we have
\begin{align*}
\sum_{j=1}^{2d}\int_{\tor}|f^j(x)|^p|h^j(x)|^p\, dx \leq\sum_{j=1}^{2d}\left(\frac{C_p}{\lambda}\|f^j\|^p_{C^1(\tor)}\|h^j\|^p_{L^p(\tor)} + M_0^p\|f^j\|^p_{L^p(\tor)}\right).
\end{align*}
Finally, we note that by \eqref{eq:gjqbetrag} and \eqref{eq:cutoffest}
\begin{align*}
\|f^j\|^p_{C^1(\tor)}&\leq \left(\sum_{Q\in\mathcal{Q}}|g_j^Q|^\frac{1}{p}\|\psi_Q\|_{C^1(\tor)}\right)^p \leq\frac{1}{\varepsilon^d}\|R_0\|_{L^1(\tor)}\left(\sum_{Q\in\mathcal{Q}}\|\psi_Q\|_{C^1(\tor)}\right)^p\\
& = C(\varepsilon, \alpha, \|R_0\|_{L^1(\tor)}),
\end{align*}
and \eqref{eq:thetainlp} gives
\begin{align*}
\|h^j\|^p_{L^p(\tor)}& =\sum_{Q\in\mathcal{Q}}\|\left(\Theta^{Q,j}\right)_\lambda\|^p_{L^p(Q)} = \sum_{Q\in\mathcal{Q}}\varepsilon^d \|\Theta^{Q,j}\|^p_{L^p(\tor)}\leq M_0^p
\end{align*}
and because of \eqref{eq:gjqsumme} and $|\psi_Q|\leq 1$ we have
\begin{align*}
\|f^j\|^p_{L^p(\tor)} = \sum_{Q\in\mathcal{Q}}\int_Q|\psi_Q|g_j^Q|^\frac{1}{p}|^p\, dx \leq \sum_{Q\in\mathcal{Q}}\int_Q|g_j^Q|\, dx = \sum_{Q\in\mathcal{Q}}\|\mathbb{1}_Qg_j^Q\|_{L^1(Q)}\leq \|R_0\|_{L^1(\tor)}.
\end{align*}
 This proves the assertion.
\end{proof}

\begin{rem}
We essentially followed the argument used in the proof of the so called \emph{improved H\"older inequality} (see e.g. \cite[Lemma 2.1]{modena2018non}). 
\end{rem}

\begin{lemma}\label{lemma:nucl1}
We have 
$$|\nu_c|\leq \|\nu\|_{L^1(\tor)}\leq C(\varepsilon,\|R_0\|_{L^1(\tor)})\mu^{-\frac{d-1}{p'}}$$
and
$$\|w\|_{L^1(\tor)}\leq C(\varepsilon,\|R_0\|_{L^1(\tor)})\mu^{-\frac{d-1}{p}}.$$
\end{lemma}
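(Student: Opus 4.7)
The bound $|\nu_c|\le\|\nu\|_{L^1(\T^d)}$ is immediate, since $\nu_c$ is (minus) the mean value of $\nu$. Hence the lemma reduces to estimating the $L^1$ norms of $\nu$ and $w$, and both estimates will proceed by the same three-step scheme: disjointness of the cutoff supports, scaling of the Mikado $L^1$-norms on a cube of side $\e$, and a H\"older step in the counting measure on $\mathcal Q$ to convert $\sum_Q |g_j^Q|^{1/p}$ into $\|R_0\|_{L^1(\T^d)}^{1/p}$.

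For $\nu$, first observe that for each fixed $j$ the cutoffs $\{\psi_Q\}_{Q\in\mathcal Q}$ have essentially disjoint supports (the cubes tile $\T^d$), so
\begin{equation*}
\|\nu\|_{L^1(\T^d)} \;\le\; \sum_{j=1}^{2d}\sum_{Q\in\mathcal Q} |g_j^Q|^{1/p}\, \|\psi_Q\|_{L^\infty}\, \|(\Theta^{Q,j})_\lambda\|_{L^1(Q)}.
\end{equation*}
Since $\lambda\e\in\N$ and $\Theta^{Q,j}$ is $1$-periodic, a direct change of variables yields $\|(\Theta^{Q,j})_\lambda\|_{L^1(Q)} = \e^d\,\|\Theta^{Q,j}\|_{L^1(\T^d)}$, which by \eqref{eq:thetainlr} with $r=1$ is bounded by $\e^d M_0\mu^{-(d-1)/p'}$.

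Next I apply H\"older's inequality in the counting measure on $\mathcal Q$ with exponents $p,p'$:
\begin{equation*}
\sum_{Q\in\mathcal Q} |g_j^Q|^{1/p} \;\le\; \Bigl(\sum_{Q\in\mathcal Q} |g_j^Q|\Bigr)^{1/p}\,(\#\mathcal Q)^{1/p'}.
\end{equation*}
By \eqref{eq:gjqbetrag} the first factor is at most $(\e^{-d}\|R_0\|_{L^1(\T^d)})^{1/p}$, and $\#\mathcal Q=\e^{-d}$, so altogether $\sum_Q |g_j^Q|^{1/p} \le \e^{-d}\|R_0\|_{L^1(\T^d)}^{1/p}$. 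Combining with the previous paragraph,
\begin{equation*}
\|\nu\|_{L^1(\T^d)} \;\le\; 2d\, M_0\,\|R_0\|_{L^1(\T^d)}^{1/p}\,\mu^{-(d-1)/p'} \;\le\; C(\e,\|R_0\|_{L^1(\T^d)})\,\mu^{-(d-1)/p'},
\end{equation*}
which is the claimed bound. The estimate for $w$ is obtained by an identical three-step argument after swapping the roles of $p$ and $p'$: the disjointness of $\{\chi_Q\}$, the scaling $\|(W^{Q,j})_\lambda\|_{L^1(Q)} = \e^d\|W^{Q,j}\|_{L^1(\T^d)}\le \e^d M_0\mu^{-(d-1)/p}$ from \eqref{eq:winlr} with $r=1$, and the H\"older bound $\sum_Q |g_j^Q|^{1/p'}\le\e^{-d}\|R_0\|_{L^1(\T^d)}^{1/p'}$.

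There is no serious obstacle here; the only care needed is to track the scaling factors $\e,\mu,\lambda$ correctly and to invoke the divisibility condition $\lambda\in\frac{1}{\e}\N$ when rewriting the periodic integral of $(\Theta^{Q,j})_\lambda$ (respectively $(W^{Q,j})_\lambda$) on the cube $Q$.
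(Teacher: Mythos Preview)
Your proof is correct and follows essentially the same approach as the paper: both use the disjointness of the cutoff supports, the scaling identity $\|(\Theta^{Q,j})_\lambda\|_{L^1(Q)}=\e^d\|\Theta^{Q,j}\|_{L^1(\T^d)}$, and then \eqref{eq:thetainlr} (resp.\ \eqref{eq:winlr}) with $r=1$. The only cosmetic difference is in how you handle the coefficients: the paper simply bounds each $|g_j^Q|^{1/p}$ pointwise via \eqref{eq:gjqbetrag} by a constant depending on $\e$ and $\|R_0\|_{L^1}$ and then sums, whereas you use H\"older in the counting measure to get $\sum_Q|g_j^Q|^{1/p}\le \e^{-d}\|R_0\|_{L^1}^{1/p}$. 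Your route yields the slightly sharper constant $2dM_0\|R_0\|_{L^1}^{1/p}$ (with no genuine $\e$-dependence), but since the lemma only asks for a bound of the form $C(\e,\|R_0\|_{L^1})\mu^{-(d-1)/p'}$ this refinement is not needed.
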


\begin{proof}
It holds by definition of $\nu_c$ that $|\nu_c|\leq\|\nu\|_{L^1(\tor)}.$ Using $\supp(\psi_Q)\subset Q,$ $|\psi_Q|\leq 1,$ \eqref{eq:gjqbetrag} and \eqref{eq:thetainlr}, we get
\begin{align*}
|\nu_c|\leq\|\nu\|_{L^1(\tor)}&\leq \jQsum\|\psi_Q|g_j^Q|^\frac{1}{p}(\Theta^{Q,j})_\lambda\|_{L^1(\tor)}\\
&\leq C(\varepsilon,\|R_0\|_{L^1(\tor)})\sum_{j=1}^{2d}\sum_{Q\in\mathcal{Q}}\|(\Theta^{Q,j})_\lambda\|_{L^1(Q)}\\
&=C(\varepsilon,\|R_0\|_{L^1(\tor)})\sum_{j=1}^{2d}\sum_{Q\in\mathcal{Q}}\varepsilon^d\|\Theta^{Q,j}\|_{L^1(\tor)}\\
&\leq C(\varepsilon,\|R_0\|_{L^1(\tor)})2dM_0\mu^{-\frac{d-1}{p'}}\\
&= C(\varepsilon,\|R_0\|_{L^1(\tor)})\mu^{-\frac{d-1}{p'}}.
\end{align*}
A similar computation with \eqref{eq:winlr} instead of \eqref{eq:thetainlr} shows the estimate for $\|w\|_{L^1(\tor)}$.
\end{proof}

\begin{lemma}\label{lemma:winlp'}
It holds
\begin{equation*}
\|w\|^{p'}_{L^{p'}(\tor)}\leq 2dM_0^p\|R_0\|_{L^1(\tor)}+\frac{C(\varepsilon,\alpha,\|R_0\|_{L^1(\tor)})}{\lambda}.
\end{equation*}
\begin{proof}
The proof is completely analogous to the proof of Lemma \ref{lemma:nulp} with $\|W^{Q,j}\|_{L^{p'}(\tor)}$ instead of   $\|\Theta^{Q,j}\|_{L^{p}(\tor)}.$
\end{proof}
\end{lemma}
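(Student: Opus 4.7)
The plan is to mirror the proof of Lemma \ref{lemma:nulp} essentially verbatim, with the obvious substitutions. Recall that
\[
w(x) = \sum_{j=1}^{2d}\sum_{Q\in\mathcal{Q}} \chi_Q(x)\,\sign(g_j^Q)\,|g_j^Q|^{1/p'}\, W^{Q,j}(\lambda x),
\]
which has exactly the same structure as $\nu$ in \eqref{eq:perturbations-2} with $\psi_Q$ replaced by $\chi_Q$, the scalar $|g_j^Q|^{1/p}$ replaced by $\sign(g_j^Q)|g_j^Q|^{1/p'}$, and $\Theta^{Q,j}$ replaced by $W^{Q,j}$. Accordingly, every instance of $p$ becomes $p'$, and the role played by \eqref{eq:thetainlp} in the previous lemma is now played by the building-block bound \eqref{eq:winlp'}, namely $\|W^{Q,j}\|_{L^{p'}(\tor)} \leq M_0$.

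The key algebraic input is that the summands indexed by distinct $(Q,j)$ have pairwise disjoint supports. Indeed, for $Q \neq Q'$ the cutoffs $\chi_Q, \chi_{Q'}$ are supported in disjoint cubes, while for fixed $Q$ and $j \neq i$, the last sentence of Proposition \ref{prop:defofmikados} guarantees $\supp W^{Q,j}\cap \supp W^{Q,i}=\emptyset$. Consequently, pointwise (with $|\cdot|$ denoting the Euclidean norm on $\mathbb{R}^d$),
\[
|w(x)|^{p'} \;=\; \sum_{j=1}^{2d} |f^j(x)|^{p'}\,|h^j(x)|^{p'}, \qquad f^j := \sum_{Q\in\mathcal{Q}}\chi_Q |g_j^Q|^{1/p'}, \qquad h^j(x) := \Bigl|\sum_{Q\in\mathcal{Q}}\mathbb{1}_Q(x)\, W^{Q,j}(\lambda x)\Bigr|.
\]

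From here I would repeat the improved-H\"older argument: partition $\tor$ into subcubes $\tilde Q_0$ of edge $1/\lambda$, each contained in a super-cube $Q_0\in\mathcal{Q}$ (possible since $\lambda \geq 1/\varepsilon$), and split
\[
\int_{\tilde Q_0} |f^j|^{p'}|h^j|^{p'}\, dx \;=\; \int_{\tilde Q_0}\Bigl[|f^j(x)|^{p'} - \tfrac{1}{|\tilde Q_0|}\!\!\int_{\tilde Q_0}\!|f^j|^{p'}\Bigr] |h^j(x)|^{p'}\, dx \;+\; \tfrac{1}{|\tilde Q_0|}\!\!\int_{\tilde Q_0}\!|f^j|^{p'}\cdot \int_{\tilde Q_0}|h^j|^{p'}.
\]
The first term yields the $O(1/\lambda)$ remainder via the oscillation estimate $\bigl||f^j(x)|^{p'}-|f^j(y)|^{p'}\bigr|\leq C_{p'}\lambda^{-1}\|f^j\|^{p'}_{C^1(\tor)}$ together with $\|h^j\|^{p'}_{L^{p'}(\tor)}\leq M_0^{p'}$. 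In the second term, $\lambda$-periodicity and \eqref{eq:winlp'} identify $\frac{1}{|\tilde Q_0|}\int_{\tilde Q_0}|h^j|^{p'}$ with $\|W^{Q_0,j}\|^{p'}_{L^{p'}(\tor)}\leq M_0^{p'}$, so summing over $\tilde Q_0$ produces $M_0^{p'}\|f^j\|^{p'}_{L^{p'}(\tor)}$.

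Finally, \eqref{eq:gjqsumme} together with $|\chi_Q|\leq 1$ gives $\|f^j\|_{L^{p'}(\tor)}^{p'} \leq \|R_0\|_{L^1(\tor)}$, whereas \eqref{eq:gjqbetrag} combined with \eqref{eq:cutoffest} gives $\|f^j\|_{C^1(\tor)} \leq C(\varepsilon,\alpha,\|R_0\|_{L^1(\tor)})$; summing the resulting bound over $j\in\{1,\dots,2d\}$ delivers the claim. I do not anticipate any genuine obstacle: the only care needed is to remember that $W^{Q,j}$ is vector-valued (so $|\cdot|^{p'}$ throughout is the Euclidean norm raised to the power $p'$) and that the sign factor $\sign(g_j^Q)$ is absorbed harmlessly under the absolute value.
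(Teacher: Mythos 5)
Your argument is a faithful verbatim transposition of the improved-H\"older proof of Lemma~\ref{lemma:nulp} under the substitutions $\psi_Q\mapsto\chi_Q$, $\Theta^{Q,j}\mapsto W^{Q,j}$, $|g_j^Q|^{1/p}\mapsto\sign(g_j^Q)|g_j^Q|^{1/p'}$ and $p\mapsto p'$, exactly as the paper intends by ``completely analogous,'' and it is correct (including the observation that the pairwise disjointness of supports justifies the per-$j$ factorization even though $W^{Q,j}$ is vector-valued). One remark: your computation naturally yields the leading constant $2dM_0^{p'}$ rather than the $2dM_0^{p}$ printed in the lemma statement; the latter is a typo carried over from the $\nu$ case, and $M_0^{p'}$ is indeed what the downstream use in Section~\ref{sec:proofofall} requires to reach $\|u_1-u_0\|_{L^{p'}(\tor)}\le (3d)^{1/p'}M_0\|R_0\|_{L^1(\tor)}^{1/p'}$.
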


\begin{lemma}\label{lemma:winwptilde}
We have the estimate
$$\|w\|_{W^{1,\tilde{p}}(\tor)}\leq C(\varepsilon,\alpha,\zeta,\|R_0\|_{L^1(\tor)})\lambda\mu^{-\gamma}.$$
\end{lemma}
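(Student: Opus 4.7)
The plan is to differentiate $w$ explicitly, split $Dw$ into the two contributions coming from the product rule, and estimate each one separately using the bounds on the Mikado fields in Proposition~\ref{prop:defofmikados} together with the disjoint-support property of the cutoffs $\chi_Q$. Writing
\begin{equation*}
Dw(x) = \jQsum \sign(g_j^Q)\,|g_j^Q|^{1/p'} \Bigl[\bigl(\nabla\chi_Q(x)\bigr)\otimes \bigl(W^{Q,j}\bigr)_\lambda(x) + \chi_Q(x)\,\lambda\,\bigl(DW^{Q,j}\bigr)_\lambda(x)\Bigr],
\end{equation*}
we see that only the second term carries the oscillation factor $\lambda$, so the whole game is that \eqref{eq:dwlptilde} provides exactly the $\mu^{-\gamma}$ gain we want from concentration.

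For the second term I will use that $\chi_Q$ is supported in $Q$ and that the cubes $Q\in\mathcal{Q}$ are (essentially) disjoint, so
\begin{equation*}
\Bigl\| \sum_Q \chi_Q\,|g_j^Q|^{1/p'}\,\lambda\,(DW^{Q,j})_\lambda\Bigr\|_{L^{\tilde p}(\tor)}^{\tilde p} \leq \lambda^{\tilde p}\sum_Q |g_j^Q|^{\tilde p/p'}\,\|(DW^{Q,j})_\lambda\|_{L^{\tilde p}(Q)}^{\tilde p}.
\end{equation*}
Since $\lambda\varepsilon\in\N$ and $W^{Q,j}$ is $1$-periodic, a change of variables yields $\|(DW^{Q,j})_\lambda\|_{L^{\tilde p}(Q)}^{\tilde p} = \varepsilon^d \|DW^{Q,j}\|_{L^{\tilde p}(\tor)}^{\tilde p}$, and then \eqref{eq:dwlptilde} provides the bound $C(\varepsilon,\zeta,\|R_0\|_{L^1})\mu^{-\gamma\tilde p}$. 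Combining this with $|g_j^Q|\leq \varepsilon^{-d}\|R_0\|_{L^1(\tor)}$ from \eqref{eq:gjqbetrag} and the fact that there are $\varepsilon^{-d}$ cubes, one obtains after summing in $j$ and extracting the $\tilde p$-th root exactly the bound $C(\varepsilon,\zeta,\|R_0\|_{L^1})\lambda\mu^{-\gamma}$.

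For the first term (the derivative of the cutoff), the same disjoint-support argument gives
\begin{equation*}
\Bigl\|\sum_Q (\nabla\chi_Q)|g_j^Q|^{1/p'}(W^{Q,j})_\lambda\Bigr\|_{L^{\tilde p}}^{\tilde p} \leq \sum_Q \|\nabla\chi_Q\|_\infty^{\tilde p}\,|g_j^Q|^{\tilde p/p'}\,\varepsilon^d\,\|W^{Q,j}\|_{L^{\tilde p}(\tor)}^{\tilde p},
\end{equation*}
with $\|\nabla\chi_Q\|_\infty\leq C(\varepsilon,\alpha)$ from \eqref{eq:cutoffest} and $\|W^{Q,j}\|_{L^{\tilde p}}\leq M_0\mu^{(d-1)/p'-(d-1)/\tilde p}=M_0\mu^{-(1+\gamma)}$ from \eqref{eq:winlr}. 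This contribution is of order $\mu^{-(1+\gamma)}$, which is dominated by $\lambda\mu^{-\gamma}$ (as $\lambda\geq 1/\varepsilon\geq 1$ and $\mu\geq\mu_0$). An entirely analogous computation, using $\|\chi_Q\|_\infty\leq 1$ in place of $\|\nabla\chi_Q\|_\infty$, estimates $\|w\|_{L^{\tilde p}(\tor)}$ by $C\mu^{-(1+\gamma)}$, which is again absorbed. Putting the three pieces together yields $\|w\|_{W^{1,\tilde p}(\tor)}\leq C(\varepsilon,\alpha,\zeta,\|R_0\|_{L^1})\lambda\mu^{-\gamma}$.

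The only subtle point, as in the previous lemmas, is the correct bookkeeping of scaling: the rescaling $W^{Q,j}\mapsto(W^{Q,j})_\lambda$ preserves every $L^r$-norm on $\tor$ (because $\lambda$ is an integer), and on a cube of edge $\varepsilon$ it produces the factor $\varepsilon^d$; combined with $|g_j^Q|^{\tilde p/p'}\leq (\varepsilon^{-d}\|R_0\|_{L^1})^{\tilde p/p'}$ and the fact that there are $\varepsilon^{-d}$ cubes, every power of $\varepsilon$ gets absorbed into the generic constant $C(\varepsilon,\alpha,\zeta,\|R_0\|_{L^1})$. There is no need here for the improved-H\"older trick used in Lemma~\ref{lemma:nulp}, since we do not aim at a sharp $M_0^p\|R_0\|_{L^1}$ constant but merely at the qualitative smallness $\lambda\mu^{-\gamma}$, which will be made small in Section~\ref{sec:proofofall} by first choosing $\varepsilon,\alpha,\zeta$ and then letting $\mu$ grow much faster than $\lambda$.
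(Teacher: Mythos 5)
Your proof is correct and follows essentially the same route as the paper: differentiate $w$, isolate the term carrying the oscillation factor $\lambda$, apply the concentration bound \eqref{eq:dwlptilde} to that term, and absorb everything else into the constant $C(\varepsilon,\alpha,\zeta,\|R_0\|_{L^1(\tor)})$. The only cosmetic differences are that the paper simply applies the triangle inequality over $(Q,j)$ (letting the finite number of cubes be absorbed by the $\varepsilon$-dependent constant, with no explicit appeal to disjoint supports), and that the paper only records the estimate for $\|Dw\|_{L^{\tilde p}}$ while you explicitly also bound $\|w\|_{L^{\tilde p}}$, which is a welcome extra step of care.
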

\begin{proof}
We calculate
\begin{align*}
Dw(x)=\jQsum \sign(g_j^Q)|g_j^Q|^\frac{1}{p'}\W(\lambda x)\otimes D\chi_Q +\lambda\chi_Q \sign(g_j^Q)|g_j^Q|^\frac{1}{p'}D\W(\lambda x).
\end{align*}
We take the norm in $L^{\tilde{p}}$ and use \eqref{eq:gjqbetrag}, \eqref{eq:dwlptilde} and \eqref{eq:cutoffest} to get
\begin{align*}
\|Dw\|_{L^{\tilde{p}}(\tor)}&\leq \jQsum |g_j^Q|^\frac{1}{p'}\|\chi_Q\|_{C^1(\tor)}\left(\|\W\|_{L^{\tilde{p}}(\tor)}  + \lambda \|D\W\|_{L^{\tilde{p}}(\tor)}\right)\\
&\leq C(\varepsilon,\alpha,\zeta,\|R_0\|_{L^1(\tor)})\lambda\mu^{-\gamma}.
\end{align*}
\end{proof}

\begin{lemma}\label{lemma:wcinlp'}
It holds
\begin{align*}
\|w_c\|_{L^{p'}(\tor)}\leq\frac{C(\varepsilon,\alpha, \|R_0\|_{L^1(\tor)})}{\lambda}.
\end{align*}
\end{lemma}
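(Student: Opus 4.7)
The plan is to estimate each summand in the definition of $w_c$ by applying the antidivergence operator from Lemma \ref{lemma:antidiv} to the product $\nabla\chi_Q\cdot(W^{Q,j})_\lambda$, exploiting that the oscillating factor $(W^{Q,j})_\lambda$ has mean zero on $\tor$, so that we gain a factor $\lambda^{-1}$; after that the claim follows by summing over $Q$ and $j$ together with the uniform bounds on the building blocks.

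First, I would decompose $\nabla\chi_Q\cdot(W^{Q,j})_\lambda = \sum_{i=1}^d (\partial_i\chi_Q)\,(W^{Q,j}_i)_\lambda$ and apply Lemma \ref{lemma:antidiv} component by component. Each component $W^{Q,j}_i$ of the Mikado field has zero mean (because $\int_{\tor} W^{Q,j}\,dx = 0$ by \eqref{eq:propertieofmikados}), and $\int_{\tor}\nabla\chi_Q\cdot(W^{Q,j})_\lambda\,dx = 0$ since $\div (W^{Q,j})_\lambda = 0$ (integration by parts). This produces
\[
\Big\| \mathcal{R}\big(\nabla\chi_Q\cdot (W^{Q,j})_\lambda\big) \Big\|_{L^{p'}(\tor)} \leq \frac{C_{p'}}{\lambda}\,\|\nabla\chi_Q\|_{C^1(\tor)}\,\|W^{Q,j}\|_{L^{p'}(\tor)},
\]
and the right-hand side is controlled, thanks to \eqref{eq:cutoffest} and the uniform bound \eqref{eq:winlp'}, by $C(\varepsilon,\alpha)\,M_0/\lambda$.

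Next I would use the triangle inequality on \eqref{eq:perturbations-3}, yielding
\[
\|w_c\|_{L^{p'}(\tor)} \leq \sum_{j=1}^{2d}\sum_{Q\in\mathcal{Q}} |g_j^Q|^{\frac{1}{p'}}\,\Big\| \mathcal{R}\big(\nabla\chi_Q\cdot(W^{Q,j})_\lambda\big) \Big\|_{L^{p'}(\tor)}.
\]
To control $\sum_{Q} |g_j^Q|^{1/p'}$ I would invoke Jensen's (or discrete Hölder's) inequality: since there are $\varepsilon^{-d}$ cubes,
\[
\sum_{Q\in\mathcal{Q}} |g_j^Q|^{\frac{1}{p'}} \leq (\#\mathcal{Q})^{\frac{1}{p}}\Big(\sum_{Q\in\mathcal{Q}} |g_j^Q|\Big)^{\frac{1}{p'}} \leq C(\varepsilon)\,\|R_0\|_{L^1(\tor)}^{\frac{1}{p'}},
\]
using \eqref{eq:gjqsumme}. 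Combining the previous two displays gives
\[
\|w_c\|_{L^{p'}(\tor)} \leq \frac{C(\varepsilon,\alpha,\|R_0\|_{L^1(\tor)})}{\lambda},
\]
which is the assertion.

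No serious obstacle is expected: the argument is the standard ``antidivergence gains $\lambda^{-1}$'' trick. The only point requiring a little care is to make sure that Lemma \ref{lemma:antidiv}, stated for scalar products $fg_\lambda$, can be applied to the scalar product $\nabla\chi_Q\cdot(W^{Q,j})_\lambda$; this is handled by decomposing componentwise (or, equivalently, by noting that the proof of Lemma \ref{lemma:antidiv} extends unchanged provided the overall mean-value conditions hold, which here follows from $\div W^{Q,j}=0$).
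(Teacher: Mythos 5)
Your proposal is correct and follows essentially the same route as the paper: apply Lemma \ref{lemma:antidiv} to each term $\nabla\chi_Q\cdot(W^{Q,j})_\lambda$ to gain a factor $\lambda^{-1}$, then sum using the bounds \eqref{eq:winlp'} and \eqref{eq:cutoffest} and control the coefficients $|g_j^Q|^{1/p'}$ via \eqref{eq:gjqsumme}. The only cosmetic difference is in the bookkeeping of $\sum_Q |g_j^Q|^{1/p'}$: you use discrete H\"older together with \eqref{eq:gjqsumme}, whereas the paper bounds $|g_j^Q|^{1/p'}$ uniformly in $Q$ via \eqref{eq:gjqbetrag} and then sums the cutoff norms; both land in a constant of the form $C(\varepsilon,\alpha,\|R_0\|_{L^1(\T^d)})$.
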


\begin{proof}
With Lemma \ref{lemma:antidiv} and the inequalities  \eqref{eq:gjqbetrag},\eqref{eq:winlp'} and \eqref{eq:cutoffest}, we obtain
\begin{align*}
\|w_c\|_{L^{p'}(\tor)}&\leq\jQsum\frac{C_{0,p'}|g_j^Q|^\frac{1}{p'}}{\lambda}\|\nabla\chi_Q\|_{C^2(\tor)}\|\W\|_{L^{p'}(\tor)}\\
&\leq \frac{C(\varepsilon,\alpha, \|R_0\|_{L^1(\tor)})}{\lambda}.
\end{align*}
\end{proof}

\begin{lemma}\label{lemma:wcinwptilde}
It holds
\begin{align*}
\|w_c\|_{W^{1,\tilde{p}}(\tor)}\leq C(\varepsilon,\alpha, \zeta, \|R_0\|_{L^1(\tor)})\mu^{-\gamma}.
\end{align*}
\end{lemma}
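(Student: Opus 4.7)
The proof will follow the same pattern as Lemma \ref{lemma:wcinlp'}, now in the stronger $W^{1,\tilde p}$ norm. Starting from
\begin{equation*}
w_c = -\jQsum \sign(g_j^Q)|g_j^Q|^{\frac{1}{p'}} \mathcal{R}\!\left(\nabla \chi_Q \cdot (W^{Q,j})_\lambda\right),
\end{equation*}
I would split $\|w_c\|_{W^{1,\tilde p}} \le \|w_c\|_{L^{\tilde p}} + \|Dw_c\|_{L^{\tilde p}}$ and bound each summand by applying Lemma \ref{lemma:antidiv} separately with $k=0$ and $k=1$, the input function being $f = \partial_i \chi_Q$ and the oscillating function being $g = W^{Q,j}$ at frequency $\lambda$.

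For the $L^{\tilde p}$ part, Lemma \ref{lemma:antidiv} with $k=0$ yields the factor $\lambda^{-1}$ and the norm $\|W^{Q,j}\|_{L^{\tilde p}}$. From \eqref{eq:winlr} we get $\|W^{Q,j}\|_{L^{\tilde p}} \le M_0 \mu^{\frac{d-1}{p'}-\frac{d-1}{\tilde p}} = M_0 \mu^{-1-\gamma}$. For the derivative term, Lemma \ref{lemma:antidiv} with $k=1$ gives a prefactor $\lambda^0 = 1$ and the norm $\|W^{Q,j}\|_{W^{1,\tilde p}}$; the $L^{\tilde p}$ piece is again bounded by $M_0\mu^{-1-\gamma}$, while the gradient piece is $\|DW^{Q,j}\|_{L^{\tilde p}} \le C(\varepsilon,\zeta,\|R_0\|_{L^1})\mu^{-\gamma}$ by \eqref{eq:dwlptilde}. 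Both estimates are thus dominated by $C \mu^{-\gamma}$, the derivative term being the leading one since $\gamma>0$ but $\mu\geq \mu_0$ is meant to be large.

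It then remains to sum over $Q\in\mathcal{Q}$ and $j\in\{1,\dots,2d\}$, controlling the scalar weights $|g_j^Q|^{1/p'}$ with \eqref{eq:gjqbetrag} (absorbed into a constant depending on $\varepsilon$ and $\|R_0\|_{L^1}$) and the cutoff contributions $\|\nabla \chi_Q\|_{C^2}$ with \eqref{eq:cutoffest} (absorbed into a constant depending on $\varepsilon$ and $\alpha$). Together these yield
\begin{equation*}
\|w_c\|_{W^{1,\tilde p}(\tor)} \le C(\varepsilon,\alpha,\zeta,\|R_0\|_{L^1(\tor)})\,\mu^{-\gamma},
\end{equation*}
as claimed.

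There is essentially no obstacle here; the one point worth checking is that each application of $\mathcal{R}$ is justified, i.e.\ that the zero-mean assumptions in Lemma \ref{lemma:antidiv} are satisfied. The mean-zero condition $\int_{\tor} W^{Q,j}\,dx = 0$ is \eqref{eq:propertieofmikados}, and a periodicity/integration-by-parts argument (already implicitly used in Lemma \ref{lemma:wcinlp'}) verifies $\int_\tor \partial_i \chi_Q \cdot (W^{Q,j})_\lambda\, dx = 0$. No other subtleties appear: this estimate is purely a book-keeping exercise combining Lemma \ref{lemma:antidiv} with the Mikado bounds from Proposition \ref{prop:defofmikados}, exactly as Lemma \ref{lemma:wcinlp'} is for the $L^{p'}$ norm.
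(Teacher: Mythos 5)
Your argument is correct and follows the same route as the paper: apply the antidivergence estimate of Lemma \ref{lemma:antidiv} with $k=1$ (picking up no $\lambda$-gain but passing $\|W^{Q,j}\|_{W^{1,\tilde p}}$) and use \eqref{eq:dwlptilde}, \eqref{eq:gjqbetrag}, \eqref{eq:cutoffest} to absorb the weights, cutoffs, and sum over $(Q,j)$ into a constant $C(\varepsilon,\alpha,\zeta,\|R_0\|_{L^1})$. You in addition spell out the $k=0$ bound for the $L^{\tilde p}$ piece of $w_c$, which the paper leaves implicit; this is a harmless extra step and does not change the conclusion.
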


\begin{proof}
We use Lemma \ref{lemma:antidiv} again together with \eqref{eq:gjqbetrag}, \eqref{eq:dwlptilde} and \eqref{eq:cutoffest} and get
\begin{align*}
\|Dw_c\|_{L^{\tilde{p}}(\tor)}&\leq\jQsum |g_j^Q|^\frac{1}{p'}\|\nabla\chi_Q\|_{C^3(\tor)}\|\W\|_{W^{1,\tilde{p}}(\tor)}\\
&\leq C(\varepsilon,\alpha,\zeta, \|R_0\|_{L^1(\tor)})\mu^{-\gamma}.
\end{align*}
\end{proof}

\section{The Defect field}
\label{sec:defect}

We now define the new defect field $R_1 = R_1(x) \in \R^{2 \times d}$. We first define (in Section \ref{sec:firsteqdef}) and estimate (in Section \ref{ss:estimates-r1}) the first row $R_1^1$ of $R_1$ and then we define and estimate (in Section \ref{sec:secondeq}) the second row of $R_1$. 

\subsection{Definition of the first row of the new Defect Field}\label{sec:firsteqdef}
So far we have only defined the new density $\rho_1$ and the new vector field $u_1$. Let us now focus on the new error $R_1.$ We want to define the new error tensor such that the first row $R_1^1$ satisfies
\begin{equation*}
-\dv R_1^1 = \dv(\rho_1u_1).
\end{equation*}
We calculate, using $\dv u_1 = 0$ and $\dv (\rho_0u_0) = -R_0^1:$
\begin{align*}
\dv(\rho_1u_1)&= \dv(\nu w+\rho_0u_0)+\dv(\nu u_0 + \rho_0 w)+ \dv(\nu_c u_1)+ \dv(\rho_0w_c + \nu w_c)\\
&=\dv(\nu w-R_0^1) + \dv(\nu u_0+ \rho_0 w) +  \dv(\rho_0 w_c + \nu w_c)\\
&= \dv(\nu w - R_0^1+ R^{1,\operatorname{lin}} + R^{1,\operatorname{corr}})
\end{align*}
with
\begin{align*}
R^{1,\operatorname{lin}} & = \nu u_0+\rho_0 w ,\\
R^{1,\operatorname{corr}} & = \rho_0 w_c + \nu w_c.
\end{align*}
Furthermore, we have
\begin{align*}
\nu w - R_0^1 &= \jQsum\psi_Q\chi_Q g_j^Q\left(\Theta^{Q,j} W^{Q,j}\right)_\lambda-\sum_{j=1}^{2d} g_jz_j\\
&=\jQsum \psi_Q\chi_Q g_j^Q\left\lbrack\left(\Theta^{Q,j}W^{Q,j}\right)_\lambda-\int_{\tor}\Theta^{Q,j}W^{Q,j}\, dx\right\rbrack\\
&\hspace{0,3cm}+\jQsum \psi_Q\chi_Qg_j^Q\left\lbrack \int_{\tor}\Theta^{Q,j}W^{Q,j}\, dx - z_j\right\rbrack\\ 
&\hspace{0,3cm} +\jQsum \psi_Q\chi_Q g_j^Qz_j-\sum_{j=1}^{2d} g_jz_j\\
&= \jQsum \psi_Q\chi_Q g_j^Q\left\lbrack\left(\Theta^{Q,j}W^{Q,j}\right)_\lambda-\int_{\tor}\Theta^{Q,j}W^{Q,j}\, dx\right\rbrack +R^{1,\operatorname{mean}} + R^{\chi,\psi}
\end{align*}
with 
\begin{align*}
R^{1,\operatorname{mean}} &= \jQsum \psi_Q\chi_Qg_j^Q\left\lbrack \int_{\tor}\Theta^{Q,j}W^{Q,j}\, dx - z_j\right\rbrack,\\
R^{\chi,\psi} &= \jQsum \psi_Q\chi_Q g_j^Qz_j-\sum_{j=1}^{2d} g_jz_j.
\end{align*}
Taking the divergence of the remaining term in the last line of the previous computation, we see that 
\begin{align*}\operatorname{div}\left(\jQsum\psi_Q\chi_Qg_j^Q\left\lbrack\left(\Theta^{Q,j}W^{Q,j}\right)_\lambda-\int_{\tor}\Theta^{Q,j}W^{Q,j}\, dx\right\rbrack\right)\\
=\jQsum g_j^Q\nabla(\psi_Q\chi_Q)\cdot\left\lbrack\left(\Theta^{Q,j}W^{Q,j}\right)_\lambda-\int_{\tor}\Theta^{Q,j}W^{Q,j}\, dx\right\rbrack,
\end{align*}
and we define $R^{1,\operatorname{quad}}$ as
$$R^{1,\operatorname{quad}}=\jQsum  g_j^Q\mathcal{R}\left(\nabla(\psi_Q\chi_Q)\cdot\left\lbrack  
\underbrace{ \left(\Theta^{Q,j}W^{Q,j}\right)_\lambda-\int_{\tor}\Theta^{Q,j}W^{Q,j}\, dx }_{\text{fast oscillating with zero mean value} }   \right \rbrack\right).$$
Let us thus define 
\begin{equation*}
R_1^1 = -\left(R^{1,\operatorname{quad}} + R^{1,\operatorname{mean}} + R^{\chi,\psi} +  R^{1,\operatorname{lin}} + R^{1,\operatorname{corr}}\right),
\end{equation*}
which satisfies $-\dv R^1_1 = \dv(\rho_1 u_1)$ by construction. In the next section we will estimate $R_1^1$ in $L^1.$

\subsection{Estimates of the first row of the new Defect Field}
\label{ss:estimates-r1}
We investigate each term in the definition of $R_1^1$ separately.

\begin{lemma}\label{lemma:1quad}
It holds
\begin{equation*}
\|R^{1,\operatorname{quad}}\|_{L^1(\tor)}\leq \frac{C(\varepsilon,\alpha,\|R_0\|_{L^1(\tor)})}{\lambda}.
\end{equation*}
\end{lemma}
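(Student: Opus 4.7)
The plan is to apply the antidivergence estimate from Lemma \ref{lemma:antidiv} to each summand of $R^{1,\operatorname{quad}}$, using the fast-oscillation/zero-mean structure of the bracketed factor to extract the gain $\lambda^{-1}$, and then to sum up against the slow cutoff factors and the coefficients $g_j^Q$.

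More concretely, fix $(Q,j)$ and set $v := \Theta^{Q,j} W^{Q,j} - \int_{\tor} \Theta^{Q,j} W^{Q,j}\, dx$, which is a smooth $\R^d$-valued function of mean zero; observe that $\nabla(\psi_Q\chi_Q)\cdot v_\lambda = \div(\psi_Q\chi_Q\, v_\lambda)$ because $\div v_\lambda = \lambda(\div v)_\lambda = 0$ by the second line of \eqref{eq:propertieofmikados}, so this scalar is itself mean zero and $\mathcal{R}$ of it is well-defined. Writing $\nabla(\psi_Q\chi_Q)\cdot v_\lambda = \sum_{k=1}^{d}\partial_k(\psi_Q\chi_Q)(v^k)_\lambda$, I would apply Lemma \ref{lemma:antidiv} componentwise (subtracting off the constants $c_k=\int \partial_k(\psi_Q\chi_Q)(v^k)_\lambda\,dx$, whose sum vanishes by the computation above, and absorbing them via the same linear antidivergence) with the parameters $k=0$, $r=1$ to get
\[
\bigl\|\mathcal{R}\!\left(\nabla(\psi_Q\chi_Q)\cdot v_\lambda\right)\bigr\|_{L^1(\tor)} \leq \frac{C}{\lambda}\,\|\nabla(\psi_Q\chi_Q)\|_{C^1(\tor)}\,\|v\|_{L^1(\tor)}.
\]

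For the two factors on the right, I would use \eqref{eq:thetawproduct} to bound $\|v\|_{L^1(\tor)} \le 2\|\Theta^{Q,j}W^{Q,j}\|_{L^1(\tor)} \le 2M_0^2$, and note that $\|\nabla(\psi_Q\chi_Q)\|_{C^1(\tor)}\le C(\varepsilon,\alpha)$ uniformly in $Q$, since every cutoff pair is a translate of finitely many fixed profiles (cf.\ \eqref{eq:cutoffest}). Multiplying by $|g_j^Q|$ and summing over $j\in\{1,\dots,2d\}$ and $Q\in\mathcal{Q}$, I would control the remaining coefficient sum by \eqref{eq:gjqbetrag}:
\[
\sum_{Q\in\mathcal{Q}} |g_j^Q| \;\le\; \frac{1}{\varepsilon^d}\sum_{Q\in\mathcal{Q}}\|R_0\|_{L^1(Q)} \;=\; \frac{1}{\varepsilon^d}\,\|R_0\|_{L^1(\tor)},
\]
which gives the desired bound $\|R^{1,\operatorname{quad}}\|_{L^1(\tor)} \le C(\varepsilon,\alpha,\|R_0\|_{L^1(\tor)})/\lambda$.

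The only subtle point I anticipate is the componentwise application of Lemma \ref{lemma:antidiv}, since that lemma requires the \emph{scalar} product $fg_\lambda$ itself to have zero mean, whereas only the full sum $\nabla(\psi_Q\chi_Q)\cdot v_\lambda$ is guaranteed to have zero mean by the divergence-free computation above. I would resolve this by subtracting the constants $c_k$ (which sum to zero) before inverting, so each piece lies in the admissible class of Lemma \ref{lemma:antidiv}, with the constant corrections producing no additional $L^1$ cost since $\sum_k c_k=0$. Aside from this bookkeeping, the proof is a direct quantitative estimate and no further ideas are needed.
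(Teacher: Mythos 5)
Your proof takes essentially the same approach as the paper's: apply Lemma \ref{lemma:antidiv} to each summand, bound the Mikado product by \eqref{eq:thetawproduct}, the cutoff gradients by \eqref{eq:cutoffest}, the coefficients by \eqref{eq:gjqbetrag}, and sum over $(Q,j)$ to obtain the factor $\lambda^{-1}$. The componentwise mean-zero subtlety you flag is real but is standard bookkeeping that the paper leaves implicit; the cleanest fix is to use the usual variant of the antidivergence operator solving $\div u = fg_\lambda - \int fg_\lambda$ (same estimate, only $\int g = 0$ required), rather than subtracting $c_k$ by hand, since $fg_\lambda - c_k$ is not literally of the product form assumed in Lemma \ref{lemma:antidiv}.
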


\begin{proof}
We use Lemma \ref{lemma:antidiv}, \eqref{eq:gjqbetrag}, \eqref{eq:thetawproduct} and \eqref{eq:cutoffest} and   estimate 
\begin{align*}
\|R^{1,\operatorname{quad}}\|_{L^1(\tor)}&\leq\jQsum \frac{C_{0,1}|g_j^Q|}{\lambda}\|\nabla(\psi_Q\chi_Q)\|_{C^2(\tor)}\left\|\left(\Theta^{Q,j}W^{Q,j}\right)_\lambda-\int_{\tor}\Theta^{Q,j} W^{Q,j}\, dx\right\|_{L^1(\tor)}\\
&\leq \frac{C(\varepsilon,\alpha, \|R_0\|_{L^1(\tor)})}{\lambda}.
\end{align*}
\end{proof}

\begin{lemma}\label{lemma:r1mean}
We have 
\begin{align*}
\|R^{1,\operatorname{mean}}\|_{L^1(\tor)}\leq 2d \|R_0\|_{L^1(\tor)} \zeta .
\end{align*}
\end{lemma}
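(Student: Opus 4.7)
The plan is to bound $\|R^{1,\operatorname{mean}}\|_{L^1(\tor)}$ termwise, using three ingredients: the cutoffs are bounded by $1$ and supported in $Q$; the mean-value property \eqref{eq:meanvalueproperty} of the building blocks produces the factor $\zeta$; and the average $g_j^Q$ summed over cubes is controlled by $\|R_0\|_{L^1(\tor)}$ via \eqref{eq:gjqsumme}.

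Concretely, I would first apply the triangle inequality on the finite double sum defining $R^{1,\operatorname{mean}}$ to reduce to estimating, for each pair $(Q,j)$,
\begin{equation*}
\left\|\psi_Q\chi_Q g_j^Q \Big(\int_{\tor}\Theta^{Q,j}W^{Q,j}\,dx - z_j\Big)\right\|_{L^1(\tor)}.
\end{equation*}
Since $\supp(\psi_Q\chi_Q)\subset Q$ and $|\psi_Q|,|\chi_Q|\leq 1$, the $L^1$ norm of $\psi_Q\chi_Q$ is at most $|Q|=\varepsilon^d$, so this term is bounded by $\varepsilon^d |g_j^Q| \cdot \left|\int_{\tor}\Theta^{Q,j}W^{Q,j}\,dx-z_j\right|$.

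Next I would split into cases. If $g_j^Q=0$, the contribution vanishes. If $g_j^Q\neq 0$, property \eqref{eq:meanvalueproperty} in Proposition \ref{prop:defofmikados} gives $\left|\int_{\tor}\Theta^{Q,j}W^{Q,j}\,dx-z_j\right|<\zeta$. In either case, each term is bounded by $\varepsilon^d |g_j^Q|\,\zeta = \|\mathbb{1}_Q g_j^Q\|_{L^1(Q)}\,\zeta$.

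Finally, summing over $Q\in\mathcal{Q}$ and invoking \eqref{eq:gjqsumme}, I get $\sum_Q \|\mathbb{1}_Q g_j^Q\|_{L^1(Q)} \leq \|R_0\|_{L^1(\tor)}$, and summing over $j\in\{1,\ldots,2d\}$ produces the factor $2d$, yielding the desired bound $2d\|R_0\|_{L^1(\tor)}\zeta$. There is no real obstacle here: the whole point of introducing $\zeta$ in the statement of Proposition \ref{prop:defofmikados} and of having chosen the cube averages $g_j^Q$ in Section \ref{ss:partition} was precisely to make this estimate essentially immediate, so the proof is just a careful bookkeeping of these three ingredients.
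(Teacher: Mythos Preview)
Your proof is correct and follows essentially the same approach as the paper's: bound each summand using $|\psi_Q\chi_Q|\leq 1$, $\supp(\psi_Q\chi_Q)\subset Q$, and the mean-value property \eqref{eq:meanvalueproperty} to obtain $\zeta\,\|\mathbb{1}_Q g_j^Q\|_{L^1(Q)}$, then sum via \eqref{eq:gjqsumme}. Your explicit observation that $\varepsilon^d|g_j^Q|=\|\mathbb{1}_Q g_j^Q\|_{L^1(Q)}$ and your case distinction for $g_j^Q=0$ are exactly what the paper does (the latter implicitly, since \eqref{eq:meanvalueproperty} is only asserted for $g_j^Q\neq 0$).
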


\begin{proof}
We estimate each term in the definition of $R^{1,\operatorname{mean}}$ separately. Note that we only need to estimate those terms with $g_j^Q\neq 0.$ For such, we have with $\supp(\psi_Q\chi_Q)\subset Q,$ $|\psi_Q\chi_Q|\leq 1$ and using \eqref{eq:meanvalueproperty}  
\begin{align*}
 \left\|\psi_Q\chi_Qg_j^Q\left\lbrack \int_{\tor}\Theta^{Q,j}W^{Q,j}\, dx - z_j\right\rbrack\right\|_{L^1(\tor)}\leq \zeta \|\mathbb{1}_Qg_j^Q\|_{L^1(Q)}.
\end{align*}
Hence, we obtain with \eqref{eq:gjqsumme}
\begin{align*}
\|R^{1,\operatorname{mean}}\|_{L^1(\tor)}&\leq \zeta\jQsum\|\mathbb{1}_Qg_j^Q\|_{L^1(Q)}\leq  2d \zeta\|R_0\|_{L^1(\tor)}.
\end{align*}

%We estimate, using \eqref{eq:propertieofmikados} with $\zeta = \frac{\delta}{16d(\|R_0\|_{L^1(\tor)}+\sigma)},$ $\supp(\psi_Q\chi_Q)\subset Q,$ $|\psi_Q\chi_Q|\leq 1$ and \eqref{eq:gjqsumme}
%\begin{align*}
%\|R^{1,\operatorname{mean}}\|_{L^1(\tor)}&\leq \jQsum \|\psi_Q\chi_Qg_j^Q\left\lbrack z_j - \int_{\tor}\Theta^{Q,j}W^{Q,j}\right\rbrack\|_{L^1(\tor)}\\
%&\leq \zeta\jQsum\|\psi_Q\chi_Qg_j^Q\|_{L^1(\tor)}\\
%&\leq \frac{\delta}{16d(\|R_0\|_{L^1(\tor)}+\sigma)}\jQsum\|g_j^Q\|_{L^1(Q)}\\
%&= \frac{\delta}{16d(\|R_0\|_{L^1(\tor)}+\sigma)} 4d (\|R_0\|_{L^1(\tor)}+\sigma) = \frac{\delta}{4}.
%\end{align*}
\end{proof}

\begin{lemma}\label{lemma:rpsichi}
It holds
\begin{align*}
\|R^{\chi,\psi}\|_{L^1(\tor)}\leq \sum_{j=1}^{2d}\big\|(\sum_{Q\in\mathcal{Q}}\mathbb{1}_Qg_j^Q)-g_j\big\|_{L^1(\tor)} + 2d\|R_0\|_{L^\infty(\tor)}(1-\alpha^d).
\end{align*}
\end{lemma}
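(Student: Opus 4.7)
\medskip

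The plan is to add and subtract $\sum_{Q\in\mathcal{Q}}\mathbb{1}_Q g_j^Q z_j$ inside the defining sum of $R^{\chi,\psi}$, splitting it into a piece that measures the discrepancy between the piecewise-constant approximation $\sum_Q \mathbb{1}_Q g_j^Q$ and $g_j$ itself (producing the first term on the right-hand side), and a piece that measures how much $\psi_Q\chi_Q$ differs from the sharp indicator $\mathbb{1}_Q$ (producing the boundary-layer term).

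More precisely, I would write, for each $j\in\{1,\dots,2d\}$,
\begin{equation*}
\sum_{Q\in\mathcal{Q}}\psi_Q\chi_Q g_j^Q z_j - g_j z_j
= \sum_{Q\in\mathcal{Q}}(\psi_Q\chi_Q-\mathbb{1}_Q)g_j^Q z_j \;+\;\Bigl(\sum_{Q\in\mathcal{Q}}\mathbb{1}_Q g_j^Q - g_j\Bigr)z_j,
\end{equation*}
so that, using $|z_j|=1$, the $L^1$ norm of the second summand is exactly $\bigl\|\sum_{Q}\mathbb{1}_Q g_j^Q-g_j\bigr\|_{L^1(\tor)}$, which after summation over $j$ gives the first term claimed in the statement.

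The remaining task is to estimate $\sum_{Q}\|(\psi_Q\chi_Q-\mathbb{1}_Q)g_j^Q\|_{L^1(\tor)}$. I would exploit the pointwise bound $|g_j^Q|\leq\|R_0\|_{L^\infty(\tor)}$ from \eqref{eq:gjqinlinf} together with the fact that $\psi_Q,\chi_Q$ are supported in $Q$, take values in $[0,1]$, and, by the definition in Section \ref{ss:cutoffs}, satisfy $\psi_Q\chi_Q\equiv 1$ on the ``interior subcube'' $\{x\in Q:\dist(x,\partial Q)>\alpha\varepsilon\}$. Consequently $\mathbb{1}_Q-\psi_Q\chi_Q$ vanishes on this interior subcube and is bounded by $1$ on its complement inside $Q$, which is a boundary layer of Lebesgue measure at most $\varepsilon^d(1-\alpha^d)$ per cube (after the appropriate reparametrization of the cutoff thresholds by a constant factor). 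Summing over the $\varepsilon^{-d}$ cubes in $\mathcal{Q}$ produces a factor of $(1-\alpha^d)$, and then summing over $j\in\{1,\dots,2d\}$ yields the bound $2d\|R_0\|_{L^\infty(\tor)}(1-\alpha^d)$. Combining the two estimates by the triangle inequality gives the claim.

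The routine step will be only the measure bound on the boundary layer; the single point to be careful about is the geometric identification of $\{\psi_Q\chi_Q=1\}$ with an axis-aligned subcube whose volume is $\alpha^d$ times that of $Q$ (reflecting whichever normalization of the cutoff parameter $\alpha$ is used in Section \ref{ss:cutoffs}), so that the complement inside $Q$ contributes precisely the factor $(1-\alpha^d)$. No anti-divergence operator and no mean-value property of the Mikados is needed here, so this lemma is by far the simplest of the error estimates.
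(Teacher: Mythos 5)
Your proof is correct, and the strategy is the same as the paper's: split $R^{\chi,\psi}$ via triangle inequality into a piecewise-constant-approximation error and a boundary-layer error, then bound the latter by $\|R_0\|_{L^\infty}$ times the measure of the boundary layer. The only difference is the choice of intermediate term: you insert $\sum_{Q}\mathbb{1}_Q g_j^Q z_j$, so the first summand of the lemma's right-hand side appears as an \emph{exact} $L^1$ norm while the boundary-layer factor carries the frozen value $g_j^Q$; the paper instead inserts $\sum_{j,Q}\psi_Q\chi_Q g_j z_j$, so the first term arises only as an \emph{upper bound} from $|\psi_Q\chi_Q|\leq 1$, and the boundary-layer factor carries the function $g_j$ itself (controlled via the pointwise estimate $|g_j|\leq|R_0|$ from Lemma~\ref{lemma:decomposition} rather than via \eqref{eq:gjqinlinf} as you use). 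Both choices are valid and arrive at the identical bound; neither buys any real advantage. Your parenthetical caveat about the normalization of $\alpha$ in the cutoff construction is well placed: with $\chi_Q=1$ on $\{\operatorname{dist}(x,\partial Q)>\alpha\varepsilon\}$ as literally written in Section~\ref{ss:cutoffs}, the set $\{\psi_Q\chi_Q=1\}$ is a subcube of side $\varepsilon(1-2\alpha)$ and the factor would be $1-(1-2\alpha)^d$, not $1-\alpha^d$; the intended normalization (inner subcube of side $\alpha\varepsilon$, boundary layer of thickness $\frac{1-\alpha}{2}\varepsilon$) is what makes the stated factor $1-\alpha^d$ come out and is also what is needed in Section~\ref{sec:proofofall}, where $\alpha$ is pushed close to $1$. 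This is an inconsistency in the paper itself rather than a gap in your argument.
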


\begin{proof}
We have
\begin{align*}
R^{\chi,\psi} &= \jQsum \psi_Q\chi_Q g_j^Qz_j-\sum_{j=1}^{2d} g_jz_j \\
&= \jQsum \psi_Q\chi_Q g_j^Qz_j - \jQsum \psi_Q\chi_Q g_j z_j\\
 &\hspace{0,2cm} + \jQsum \psi_Q\chi_Q g_j z_j - \sum_{j=1}^{2d} g_jz_j .
\end{align*}
We estimate the first line in the previous computation, using $|\psi_Q\chi_Q|\leq 1,$ and $\supp(\psi_Q\chi_Q)\subset Q$
\begin{align*}
&\left\|\jQsum \psi_Q\chi_Q g_j^Qz_j - \jQsum \psi_Q\chi_Q g_j z_j\right\|_{L^1(\tor)}\leq \jQsum\|g_j^Q - g_j\|_{L^1(Q)}\\
&\hspace{1cm} = \sum_{j=1}^{2d}\big\|(\sum_{Q\in\mathcal{Q}}\mathbb{1}_Qg_j^Q)-g_j\big\|_{L^1(\tor)}.
\end{align*}
For the second line, we define
\begin{align*}
D = \bigcup\limits_{Q\in\mathcal{Q}}\lbrace x\in Q: \psi_Q\chi_Q\neq 1\rbrace\subset\tor
\end{align*}
and estimate, using $|g_j(x)|\leq \|R\|_{L^{\infty}(\tor)}$
\begin{align*}
&\left\| \jQsum \psi_Q\chi_Q g_j z_j - \sum_{j=1}^{2d}g_jz_j\right\|_{L^1(\tor)}\leq \sum_{j=1}^{2d}\big\|\big\lbrack (\sum_{Q\in\mathcal{Q}}\psi_Q\chi_Q) -1 \big\rbrack g_j\big\|_{L^1(\tor)}\\
&\hspace{1cm} \leq\sum_{j=1}^{2d} \operatorname{vol}(D)\|g_j\|_{L^\infty(\tor)} = 2d\|R_0\|_{L^\infty(\tor)}\operatorname{vol}(D) \\
&\hspace{1cm} \leq 2d\|R_0\|_{L^\infty(\tor)}(1-\alpha^d).
\end{align*}
\end{proof}

\begin{lemma}\label{lemma:r1lin}
It holds 
\begin{align*}
\|R^{1,\operatorname{lin}}\|_{L^1(\tor)}\leq C(\varepsilon,\|R_0\|_{L^1(\tor)},\|u_0\|_{C^0(\tor)},\|\rho_0\|_{C^0(\tor)})\mu^{-\min(\frac{d-1}{p},\frac{d-1}{p'})}.
\end{align*}
\end{lemma}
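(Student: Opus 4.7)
The plan is quite direct: since $R^{1,\operatorname{lin}} = \nu u_0 + \rho_0 w$ is a sum of two products, each of which pairs a low-frequency smooth factor ($u_0$ or $\rho_0$) with one of the oscillating perturbations ($\nu$ or $w$), the bound will follow from a straightforward H\"older estimate combined with the $L^1$ bounds already established in Lemma \ref{lemma:nucl1}.

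Concretely, the first step is to split
\begin{equation*}
\|R^{1,\operatorname{lin}}\|_{L^1(\tor)} \leq \|\nu u_0\|_{L^1(\tor)} + \|\rho_0 w\|_{L^1(\tor)}.
\end{equation*}
Since $u_0, \rho_0 \in C^\infty(\tor)$ are bounded, I would use the trivial H\"older estimate to bound each summand by putting the smooth factor in $L^\infty$ and the oscillating factor in $L^1$:
\begin{equation*}
\|\nu u_0\|_{L^1(\tor)} \leq \|u_0\|_{C^0(\tor)} \|\nu\|_{L^1(\tor)}, \qquad \|\rho_0 w\|_{L^1(\tor)} \leq \|\rho_0\|_{C^0(\tor)} \|w\|_{L^1(\tor)}.
\end{equation*}

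The second step is to apply Lemma \ref{lemma:nucl1}, which provides
\begin{equation*}
\|\nu\|_{L^1(\tor)} \leq C(\varepsilon, \|R_0\|_{L^1(\tor)}) \mu^{-\frac{d-1}{p'}}, \qquad \|w\|_{L^1(\tor)} \leq C(\varepsilon, \|R_0\|_{L^1(\tor)}) \mu^{-\frac{d-1}{p}}.
\end{equation*}
Combining these bounds with the H\"older estimates above and taking the worse of the two decay rates yields
\begin{equation*}
\|R^{1,\operatorname{lin}}\|_{L^1(\tor)} \leq C(\varepsilon, \|R_0\|_{L^1(\tor)}, \|u_0\|_{C^0(\tor)}, \|\rho_0\|_{C^0(\tor)}) \mu^{-\min\left(\frac{d-1}{p}, \frac{d-1}{p'}\right)},
\end{equation*}
which is exactly the claim. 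There is no real obstacle here: the whole content of the lemma is that the linear interaction terms between the old (smooth) objects $\rho_0, u_0$ and the new oscillating perturbations $\nu, w$ are small in $L^1$ precisely because the building blocks are concentrated, and this concentration is already captured quantitatively by Lemma \ref{lemma:nucl1}.
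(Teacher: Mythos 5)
Your proof is correct and follows exactly the same route as the paper: split $R^{1,\operatorname{lin}}$ into its two summands, bound each by pairing the $C^0$ norm of the smooth factor with the $L^1$ norm of the oscillating one, and invoke Lemma \ref{lemma:nucl1}. Nothing is missing.
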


\begin{proof}
We have $\|R^{1,\operatorname{lin}}\|_{L^1(\tor)}\leq \|u_0\|_{C^0(\tor)}\|\nu\|_{L^1(\tor)} + \|\rho_0\|_{C^0(\tor)}\|w\|_{L^1(\tor)}.$
The first summand can be estimated using Lemma \ref{lemma:nucl1} by
\begin{align*}
\|u_0\|_{C^0(\tor)}\|\nu\|_{L^1(\tor)} &\leq C(\varepsilon, \|R_0\|_{L^1(\tor)})\|u_0\|_{C^0(\tor)}\mu^{-\frac{d-1}{p'}} = C(\varepsilon, \|R_0\|_{L^1(\tor)},\|u_0\|_{C^0(\tor)})\mu^{-\frac{d-1}{p'}}.
\end{align*}
Similarly, the second summand can be estimated by $$\|\rho_0\|_{C^0(\tor)}\|w\|_{L^1(\tor)}\leq C(\varepsilon,\|R_0\|_{L^1(\tor)},\|\rho_0\|_{C^0(\tor)})\mu^{-\frac{d-1}{p}}.$$
\end{proof}

\begin{lemma}\label{lemma:r1corr}
It holds
\begin{align*}
\|R^{1,\operatorname{corr}}\|_{L^1(\tor)}\leq\frac{C(\varepsilon,\alpha, \|R_0\|_{L^1(\tor)},\|\rho_0\|_{C^0(\tor)})}{\lambda}.
\end{align*}
\end{lemma}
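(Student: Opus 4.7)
The plan is to prove the bound by estimating the two summands in the definition of $R^{1,\operatorname{corr}} = \rho_0 w_c + \nu w_c$ separately, both reducing to the estimate on $w_c$ provided by Lemma \ref{lemma:wcinlp'}.

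First I would bound $\|\rho_0 w_c\|_{L^1(\tor)}$. Since $\rho_0$ is smooth, we can simply pull it out in $L^\infty$:
\begin{equation*}
\|\rho_0 w_c\|_{L^1(\tor)} \leq \|\rho_0\|_{C^0(\tor)} \|w_c\|_{L^1(\tor)} \leq \|\rho_0\|_{C^0(\tor)} \|w_c\|_{L^{p'}(\tor)} \leq \frac{C(\varepsilon,\alpha,\|R_0\|_{L^1(\tor)},\|\rho_0\|_{C^0(\tor)})}{\lambda},
\end{equation*}
where the middle inequality uses that $\tor$ has unit volume, so $L^{p'} \hookrightarrow L^1$ with constant $1$, and the final inequality is Lemma \ref{lemma:wcinlp'}.

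For the second summand $\|\nu w_c\|_{L^1(\tor)}$, I would apply H\"older's inequality with the dual exponents $p,p'$:
\begin{equation*}
\|\nu w_c\|_{L^1(\tor)} \leq \|\nu\|_{L^p(\tor)} \|w_c\|_{L^{p'}(\tor)}.
\end{equation*}
By Lemma \ref{lemma:nulp}, the norm $\|\nu\|_{L^p(\tor)}$ is bounded by a constant $C(\|R_0\|_{L^1(\tor)})$ plus a term of order $\lambda^{-1/p}$; in particular it is bounded independently of $\lambda$ (taking $\lambda\geq 1$, which we may assume since eventually $\lambda \gg 1$). Combined with Lemma \ref{lemma:wcinlp'}, this yields
\begin{equation*}
\|\nu w_c\|_{L^1(\tor)} \leq C(\|R_0\|_{L^1(\tor)}) \cdot \frac{C(\varepsilon,\alpha,\|R_0\|_{L^1(\tor)})}{\lambda} = \frac{C(\varepsilon,\alpha,\|R_0\|_{L^1(\tor)})}{\lambda}.
\end{equation*}

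Adding the two estimates yields the claimed bound. There is no real obstacle here: the gain of the factor $\lambda^{-1}$ comes entirely from the antidivergence operator $\mathcal{R}$ used in the construction of $w_c$ (encapsulated in Lemma \ref{lemma:wcinlp'}), and the rest is a routine application of H\"older. The only mild point is to ensure that the $\nu$ factor does not contribute a power of $\lambda$, which is guaranteed because the $L^p$ norm of $\nu$ is essentially controlled by the $L^1$ norm of $R_0$ independently of the oscillation parameter.
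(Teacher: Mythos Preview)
Your proof is correct and follows essentially the same route as the paper: split $R^{1,\operatorname{corr}}=\rho_0 w_c+\nu w_c$, bound the first term by $\|\rho_0\|_{C^0}\|w_c\|_{L^{p'}}$ and the second by H\"older with exponents $p,p'$, then invoke Lemmas \ref{lemma:nulp} and \ref{lemma:wcinlp'}. The only minor imprecision is that the bound on $\|\nu\|_{L^p}$ coming from Lemma \ref{lemma:nulp} carries a dependence on $\varepsilon,\alpha$ (through the $\lambda^{-1}$ term), not just on $\|R_0\|_{L^1}$; this is harmless since it is absorbed into the final constant anyway.
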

\begin{proof}
We have $\|R^{1,\operatorname{corr}}\|_{L^1(\tor)}\leq \|\rho_0\|_{C^0(\tor)}\|w_c\|_{L^1(\tor)} + \|\nu\|_{L^p(\tor)}\|w_c\|_{L^{p'}(\tor)}.$
With Lemma \ref{lemma:nulp} and \ref{lemma:wcinlp'}, we obtain
\begin{align*}
\|R^{1,\operatorname{corr}}\|_{L^1(\tor)}&\leq \|\rho_0\|_{C^0(\tor)}\|w_c\|_{L^{p'}(\tor)} + \|\nu\|_{L^p(\tor)}\|w_c\|_{L^{p'}(\tor)}\\
&\leq\frac{C(\varepsilon,\alpha, \|R_0\|_{L^1(\tor)},\|\rho_0\|_{C^0(\tor)})}{\lambda}.
\end{align*}

\end{proof}

\subsection{Definition and Estimates of the second row of the new Defect Field}\label{sec:secondeq}

We do a similar computation as in the beginning of the previous section. We have (recalling that $\div (\beta(\rho_0) u_0 - h_0) = - \div R^2_0$)
\begin{align*}
\dv(\beta(\rho_{1})u_{1}-h^\ast)&=\dv(\beta(\rho_0+\nu + \nu_c)u_0)+\dv(\beta(\rho_0+\nu + \nu_c)w)+\dv(\beta(\rho_0+\nu + \nu_c)w_c)-\dv h^\ast\\
&=\dv(\beta(\rho_0)u_0 - h_0)-\dv(\beta(\rho_0)u_0)+\dv(\beta(\rho_0+\nu+\nu_c)u_0)\\
&\hspace{0,5cm} +\dv(\beta(\rho_0+\nu+\nu_c)w)+\dv(\beta+\nu+\nu_c)w_c) + \dv(h_0-h^\ast)\\
&=\dv (\beta(\nu)w -R_0^2) \\
&\hspace{0,5cm} + \dv(\lbrack \beta(\rho_0+\nu+\nu_c)-\beta(\rho_0)\rbrack u_0) + \dv([\beta(\rho_0+\nu+\nu_c)-\beta(\nu)]w)\\
&\hspace{0,5cm}  +\dv(\beta(\rho_0+\nu+\nu_c)w_c)\\
&\hspace{0,5cm} + \dv(h_0-h^\ast)\\
& = \dv (\beta(\nu)w - R_0^2) + \dv R^{2,\operatorname{lin}} + \dv R^{2,\operatorname{corr}} + \dv R^{h}
\end{align*}
with
\begin{align*}
R^{2,\operatorname{lin}} &= \lbrack \beta(\rho_0+\nu+\nu_c)-\beta(\rho_0)\rbrack u_0 + [\beta(\rho_0+\nu+\nu_c)-\beta(\nu)]w,\\
R^{2,\operatorname{corr}} &=\beta(\rho_0+\nu+\nu_c)w_c,\\
R^h &= h_0 - h^\ast.
\end{align*}

We sketch the estimates for $R^{2,\operatorname{lin}}, R^{2\operatorname{corr}}$ and $R^h$.

\begin{lemma}\label{lemma:r2lin}
It holds
\begin{align*}
\|R^{2,\operatorname{lin}}\|_{L^1(\tor)}\leq C(\varepsilon, \|R_0\|_{L^1(\tor)},\|\rho_0\|_{C^0(\tor)}, \|u_0\|_{C^0(\tor)})\mu^{-\min(\frac{d-1}{p},\frac{d-1}{p'})}.
\end{align*}
\end{lemma}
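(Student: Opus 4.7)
The plan is to exploit the global Lipschitz continuity of $\beta$ (assumed in the statement of Theorem \ref{thm:main}) together with the $L^1$-smallness of the perturbations $\nu,\nu_c,w$ already established in Lemma \ref{lemma:nucl1}. Writing $R^{2,\operatorname{lin}} = A_1 + A_2$ with $A_1 = [\beta(\rho_0+\nu+\nu_c)-\beta(\rho_0)] u_0$ and $A_2 = [\beta(\rho_0+\nu+\nu_c)-\beta(\nu)] w$, I will estimate each piece separately by pulling out $L^\infty$-norms of $u_0$ and of $\rho_0$ and putting the remaining factor in $L^1$.

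For $A_1$, I apply the pointwise bound $|\beta(\rho_0+\nu+\nu_c)-\beta(\rho_0)| \le \operatorname{Lip}(\beta)\,|\nu+\nu_c|$, take the $L^\infty$-norm of $u_0$ out, and estimate
\begin{equation*}
\|A_1\|_{L^1(\tor)} \le \operatorname{Lip}(\beta)\,\|u_0\|_{C^0(\tor)}\bigl(\|\nu\|_{L^1(\tor)}+|\nu_c|\bigr) \le C(\varepsilon,\|R_0\|_{L^1(\tor)},\|u_0\|_{C^0(\tor)})\,\mu^{-\frac{d-1}{p'}},
\end{equation*}
using the bounds on $\|\nu\|_{L^1}$ and $|\nu_c|$ provided by Lemma \ref{lemma:nucl1}. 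For $A_2$, the same Lipschitz trick gives $|\beta(\rho_0+\nu+\nu_c)-\beta(\nu)| \le \operatorname{Lip}(\beta)\,(|\rho_0|+|\nu_c|)$, so pulling out the $C^0$-norm of $\rho_0$ and using $|\nu_c|\le\|\nu\|_{L^1}\le 1$ (eventually, for $\mu$ large, or simply absorbing it into the constant) yields
\begin{equation*}
\|A_2\|_{L^1(\tor)} \le \operatorname{Lip}(\beta)\bigl(\|\rho_0\|_{C^0(\tor)}+|\nu_c|\bigr)\|w\|_{L^1(\tor)} \le C(\varepsilon,\|R_0\|_{L^1(\tor)},\|\rho_0\|_{C^0(\tor)})\,\mu^{-\frac{d-1}{p}},
\end{equation*}
again by Lemma \ref{lemma:nucl1}. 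Summing the two contributions and taking the worse decay rate produces the factor $\mu^{-\min((d-1)/p,\,(d-1)/p')}$ claimed in the statement.

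No real obstacle is expected here: the only conceptual ingredients are the global Lipschitz assumption on $\beta$ (needed so that differences of $\beta$ can be controlled by differences of the arguments, independently of the sizes involved) and the $L^1$-smallness of $\nu,\nu_c,w$, and both are already available. The exponent $\min((d-1)/p,(d-1)/p')$ appears simply because $\nu,\nu_c$ are concentrated in $L^1$ at rate $\mu^{-(d-1)/p'}$ while $w$ is concentrated in $L^1$ at rate $\mu^{-(d-1)/p}$, and the two rates are mixed by the two summands of $R^{2,\operatorname{lin}}$.
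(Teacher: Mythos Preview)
Your proof is correct and follows essentially the same approach as the paper's: split $R^{2,\operatorname{lin}}$ into the two summands, apply the global Lipschitz bound on $\beta$ pointwise to each, pull out $\|u_0\|_{C^0}$ and $\|\rho_0\|_{C^0}$ respectively, and invoke Lemma~\ref{lemma:nucl1} for the $L^1$-smallness of $\nu$, $\nu_c$ and $w$. The paper handles the $|\nu_c|\,\|w\|_{L^1}$ cross-term exactly as you do, simply absorbing it via Lemma~\ref{lemma:nucl1}.
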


\begin{proof}
We have by the Lipschitz continuity of $\beta$
\begin{align*}
\|\lbrack \beta(\rho_0+\nu+\nu_c)-\beta(\rho_0)\rbrack u_0\|_{L^1(\tor)}\leq \operatorname{Lip}(\beta)\|u_0\|_{C^0(\tor)} \|\nu+\nu_c\|_{L^1(\tor)}
\end{align*}
and
\begin{align*}
\|[\beta(\rho_0+\nu+\nu_c)-\beta(\nu)]w\|_{L^1(\tor)}\leq \operatorname{Lip}(\beta) (\|\rho_0\|_{C^0(\tor)}\|w\|_{L^1(\tor)} + |\nu_c|\|w\|_{L^1(\tor)}).
\end{align*}
Now Lemma \ref{lemma:nucl1} implies the statement.
\end{proof}

\begin{lemma}\label{lemma:r2corr}
It holds
\begin{align*}
\|R^{2,\operatorname{corr}}\|_{L^1(\tor)}\leq\frac{C(\varepsilon,\alpha, \|R_0\|_{L^1(\tor)},\|\rho_0\|_{C^0(\tor)})}{\lambda}.
\end{align*}
\end{lemma}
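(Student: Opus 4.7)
The plan is to mimic the proof of Lemma \ref{lemma:r1corr} very closely, since $R^{2,\operatorname{corr}} = \beta(\rho_0+\nu+\nu_c)w_c$ has structurally the same form as $R^{1,\operatorname{corr}} = \rho_0 w_c + \nu w_c$, up to the nonlinearity $\beta$. The key observation is that since $\beta$ is globally Lipschitz and $\beta(0)=0$ (the normalization made at the start of Section \ref{sec:main-prop}), one has the pointwise bound $|\beta(\tau)| \leq \operatorname{Lip}(\beta)|\tau|$, so that
\begin{equation*}
\|R^{2,\operatorname{corr}}\|_{L^1(\tor)} \;\leq\; \operatorname{Lip}(\beta) \, \|(\rho_0 + \nu + \nu_c)\, w_c\|_{L^1(\tor)}.
\end{equation*}

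From this point the estimate splits into three summands $\|\rho_0 w_c\|_{L^1}$, $\|\nu w_c\|_{L^1}$ and $\|\nu_c w_c\|_{L^1}$, each handled by a suitable Hölder pairing. For the first, I would simply use $\|\rho_0 w_c\|_{L^1(\tor)} \leq \|\rho_0\|_{C^0(\tor)} \|w_c\|_{L^1(\tor)} \leq \|\rho_0\|_{C^0(\tor)} \|w_c\|_{L^{p'}(\tor)}$ and invoke Lemma \ref{lemma:wcinlp'} to pick up the desired factor $\lambda^{-1}$. For the second, the natural pairing is $\|\nu w_c\|_{L^1(\tor)} \leq \|\nu\|_{L^p(\tor)} \|w_c\|_{L^{p'}(\tor)}$, where Lemma \ref{lemma:nulp} gives $\|\nu\|_{L^p(\tor)} \leq C(\varepsilon,\alpha,\|R_0\|_{L^1(\tor)})$ (provided $\lambda \geq 1$, which we may assume) and Lemma \ref{lemma:wcinlp'} again yields the $\lambda^{-1}$ factor. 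For the third, $\nu_c$ is just a constant, so $\|\nu_c w_c\|_{L^1(\tor)} \leq |\nu_c|\,\|w_c\|_{L^{p'}(\tor)}$, and Lemma \ref{lemma:nucl1} together with Lemma \ref{lemma:wcinlp'} gives a bound of size $C \mu^{-(d-1)/p'} \lambda^{-1}$, which is certainly controlled by $C/\lambda$.

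Summing the three contributions absorbs $\operatorname{Lip}(\beta)$ (which depends only on $\beta$ and hence is implicit in the unlabeled constant) and yields
\begin{equation*}
\|R^{2,\operatorname{corr}}\|_{L^1(\tor)} \;\leq\; \frac{C(\varepsilon,\alpha, \|R_0\|_{L^1(\tor)},\|\rho_0\|_{C^0(\tor)})}{\lambda},
\end{equation*}
which is the claimed bound. There is no real obstacle here: the only thing to be slightly careful about is that the dependence of the constant does not pick up additional quantities (such as $\zeta$ or $\mu$). This is fine because the bounds on $\nu$ in $L^p$, on $\nu_c$ and on $w_c$ in $L^{p'}$ used above depend only on $\varepsilon, \alpha$ and $\|R_0\|_{L^1(\tor)}$ (the $\mu$-dependence in the cruder bound $\|\nu\|_{L^1}$ of Lemma \ref{lemma:nucl1} is harmless since it enters through a negative power of $\mu$), so the Lipschitz constant of $\beta$ is the only extra ingredient and is absorbed into $C$.
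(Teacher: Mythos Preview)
Your proof is correct and follows essentially the same approach as the paper: use $\beta(0)=0$ and the Lipschitz bound to reduce to $\||\rho_0+\nu+\nu_c|\,w_c\|_{L^1}$, then estimate exactly as in Lemma \ref{lemma:r1corr}. Your treatment is in fact slightly more careful than the paper's, since you handle the extra $\nu_c w_c$ term explicitly (the paper just writes ``this can be estimated just as $R^{1,\operatorname{corr}}$'').
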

\begin{proof}
Using again the Lipschitz continuity of $\beta$ and $\beta(0) = 0$, we see that
\begin{align*}
\|R^{2,\operatorname{corr}}\|_{L^1(\tor)}\leq \operatorname{Lip}(\beta)\| |\rho_0+\nu+\nu_c|w_c\|_{L^1(\tor)}
\end{align*}
and this can be estimated just as $R^{1,\operatorname{corr}}.$
\end{proof}

\begin{lemma}\label{lemma:rh}
It holds $\|R^h\|_{L^1(\tor)}\leq \frac{\delta}{4}.$
\end{lemma}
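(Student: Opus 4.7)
The plan is essentially immediate from unwinding the definition. Recall that in Section \ref{sec:secondeq} the quantity $R^{h}$ was defined as $R^{h} = h_0 - h^\ast$. The hypothesis of the Main Proposition \ref{prop:main} stipulates precisely that $h^\ast \in C^\infty(\tor; \R^d)$ satisfies $\|h^\ast - h_0\|_{L^1(\tor)} \leq \frac{\delta}{4}$.

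Therefore the proof is a one-liner: I would simply write
\begin{equation*}
\|R^h\|_{L^1(\tor)} = \|h_0 - h^\ast\|_{L^1(\tor)} = \|h^\ast - h_0\|_{L^1(\tor)} \leq \frac{\delta}{4},
\end{equation*}
using symmetry of the $L^1$-norm under sign change, and invoking the standing assumption on $h^\ast$ from the statement of Proposition \ref{prop:main}.

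There is no real obstacle here; the lemma is bookkeeping. Its role in the overall scheme is to record that the contribution coming from the discrepancy between the smooth approximant $h_0$ used at stage $q$ and the new target $h^\ast = h_{q+1}$ can be absorbed in the final error estimate \eqref{est:R}, together with the remaining pieces $R^{1,\mathrm{quad}}, R^{1,\mathrm{mean}}, R^{\chi,\psi}, R^{1,\mathrm{lin}}, R^{1,\mathrm{corr}}, R^{2,\mathrm{lin}}, R^{2,\mathrm{corr}}$ estimated in Lemmas \ref{lemma:1quad}--\ref{lemma:r2corr}. The $\frac{\delta}{4}$ bound is tailored so that, when summed with the other contributions (each of which will be made smaller than $\delta/8$, say, by a suitable choice of the parameters $\e, \zeta, \alpha, \mu, \lambda$ in Section \ref{sec:proofofall}), the total $L^1$-norm of $R_1$ remains below $\delta$, as required by \eqref{est:R}.
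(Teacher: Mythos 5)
Your proof is correct and matches the paper's, which likewise just invokes the hypothesis $\|h^\ast-h_0\|_{L^1(\tor)}\leq\frac{\delta}{4}$ from Proposition \ref{prop:main} after recalling $R^h=h_0-h^\ast$. The surrounding commentary about its role in the overall scheme is accurate but not part of the proof itself.
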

\begin{proof}
This is true by the assumptions on $h_0, h^\ast.$
\end{proof}

Now we come to the remaining terms. Using that $\beta(0)=0$, $\supp \psi_Q \subseteq Q$ and, for any fixed $Q$, $\Theta^{Q,i}$, $\Theta^{Q,j}$ have mutually disjoint support for $i \neq j$, we have
$$\beta(\nu) = \jQsum\beta(\psi_Q|g_j^Q|^\frac{1}{p}\left(\Theta^{Q,j}\right)_\lambda)$$
and we calculate 
\begin{align*}
\beta(\nu)w-R_0^2 &=\jQsum \beta\left(\psi_Q|g_j^Q|^\frac{1}{p}\left(\Theta^{Q,j}\right)_\lambda\right)\chi_Q\sign(g_j^Q)|g_j^Q|^\frac{1}{p'}\left(W^{Q,j}\right)_\lambda-\sum_{j=1}^{2d}(-1)^{j+1}g_jz_j\\
&=\jQsum \chi_Q\sign(g_j^Q)|g_j^Q|^\frac{1}{p'}\left[\beta\left(\psi_Q|g_j^Q|^\frac{1}{p}\left(\Theta^{Q,j}\right)_\lambda\right)\left(W^{Q,j}\right)_\lambda\right.\\
&\hspace{0,3cm}\left.-\int_{\tor}\beta\left(|g_j^Q|^\frac{1}{p}\left(\Theta^{Q,j}\right)_\lambda\right)\left(W^{Q,j}\right)_\lambda\, dx\right\rbrack\\
&\hspace{0,3cm} + \jQsum \chi_Q\sign(g_j^Q) |g_j^Q|^\frac{1}{p'}\left[\int_{\tor}\beta\left(|g_j^Q|^\frac{1}{p}\left(\Theta^{Q,j}\right)_\lambda\right)\left(W^{Q,j}\right)_\lambda\, dx - (-1)^{j+1}|g_j^Q|^\frac{1}{p}z_j \right]\\
&\hspace{0,3cm} + \jQsum (-1)^{j+1}\chi_Qg_j^Qz_j-\sum_{j=1}^{2d}(-1)^{j+1}g_jz_j\\
&= \jQsum \chi_Q\sign(g_j^Q)|g_j^Q|^\frac{1}{p'}\left\lbrack\beta\left(\psi_Q|g_j^Q|^\frac{1}{p}\left(\Theta^{Q,j}\right)_\lambda\right)\left(W^{Q,j}\right)_\lambda\right.\\
&\hspace{0,3cm}-\left.\int_{\tor}\beta\left(|g_j^Q|^\frac{1}{p}\left(\Theta_{\mu}^{Q,j}\right)_\lambda\right)\left(W_{\mu}^{Q,j}\right)_\lambda\, dx\right\rbrack + R^{2,\operatorname{mean}} + R^\chi
\end{align*}
with 
\begin{align*}
 R^{2,\operatorname{mean}} & = \jQsum \chi_Q \sign(g_j^Q)|g_j^Q|^\frac{1}{p'}\left[\int_{\tor}\beta\left(|g_j^Q|^\frac{1}{p}\left(\Theta^{Q,j}\right)_\lambda\right)\left(W^{Q,j}\right)_\lambda\, dx - (-1)^{j+1} |g_j^Q|^\frac{1}{p}z_j\right],\\
 R^\chi & = \jQsum (-1)^{j+1}\chi_Qg_j^Qz_j-\sum_{j=1}^{2d}(-1)^{j+1}g_jz_j.
\end{align*}

\begin{lemma}\label{lemma:r2mean}
We have $$\|R^{2,\operatorname{mean}}\|_{L^1(\tor)}\leq 2d \|R_0\|_{L^\infty(\tor)}^\frac{1}{p'}  \zeta.$$ 
\end{lemma}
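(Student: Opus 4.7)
The plan is to estimate each summand in the definition of $R^{2,\operatorname{mean}}$ separately and then sum. I would proceed as follows.

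First, observe that for any $Q \in \mathcal{Q}$ and $j \in \{1,\dots,2d\}$, the function inside the brackets, namely
\begin{equation*}
\int_{\tor}\beta\left(|g_j^Q|^\frac{1}{p}\left(\Theta^{Q,j}\right)_\lambda\right)\left(W^{Q,j}\right)_\lambda\, dx - (-1)^{j+1} |g_j^Q|^\frac{1}{p}z_j,
\end{equation*}
is a constant vector. Since $(\Theta^{Q,j})_\lambda, (W^{Q,j})_\lambda$ are $\frac{1}{\lambda}$-periodic and $\lambda \in \mathbb{N}$, integrating either $\beta(|g_j^Q|^{1/p}(\Theta^{Q,j})_\lambda)(W^{Q,j})_\lambda$ or the unscaled $\beta(|g_j^Q|^{1/p}\Theta^{Q,j}) W^{Q,j}$ over $\tor$ produces the same value, so the fourth inequality of \eqref{eq:propertieofmikados} applies directly and gives
\begin{equation*}
\left|\int_{\tor}\beta\left(|g_j^Q|^\frac{1}{p}\left(\Theta^{Q,j}\right)_\lambda\right)\left(W^{Q,j}\right)_\lambda\, dx - (-1)^{j+1} |g_j^Q|^\frac{1}{p}z_j\right| < \zeta
\end{equation*}
whenever $g_j^Q \neq 0$. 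If $g_j^Q = 0$, the whole summand vanishes because both sides of the bracket are zero by construction of $\Theta^{Q,j}, W^{Q,j}$.

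Next, I would estimate a single summand in $L^1$ using $|\chi_Q|\leq 1$, $\supp \chi_Q \subset Q$, the bound above, and the pointwise estimate $|g_j^Q|\leq \|R_0\|_{L^\infty(\tor)}$ from \eqref{eq:gjqinlinf}:
\begin{equation*}
\left\| \chi_Q\sign(g_j^Q)|g_j^Q|^\frac{1}{p'}\left[\int_{\tor}\beta\left(|g_j^Q|^\frac{1}{p}\left(\Theta^{Q,j}\right)_\lambda\right)\left(W^{Q,j}\right)_\lambda\, dx - (-1)^{j+1} |g_j^Q|^\frac{1}{p}z_j\right]\right\|_{L^1(\tor)} \leq \|R_0\|_{L^\infty(\tor)}^\frac{1}{p'}\, \zeta\, |Q|.
\end{equation*}

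Finally, summing over $Q \in \mathcal{Q}$ and $j \in \{1,\dots,2d\}$, and using $\sum_{Q\in\mathcal{Q}}|Q|=1$, gives
\begin{equation*}
\|R^{2,\operatorname{mean}}\|_{L^1(\tor)} \leq 2d\, \|R_0\|_{L^\infty(\tor)}^\frac{1}{p'}\, \zeta,
\end{equation*}
as claimed. There is no real obstacle here once one notices the periodicity argument identifying the $\lambda$-scaled integral with the unscaled one, so that the key pointwise bound \eqref{eq:propertieofmikados} on the Mikado building blocks can be invoked directly; the only factor to keep track of is $|g_j^Q|^{1/p'}$, which is uniformly controlled via $\|R_0\|_{L^\infty}^{1/p'}$.
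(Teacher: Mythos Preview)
Your proof is correct and follows essentially the same approach as the paper's own proof: bound each summand using $|\chi_Q|\leq 1$, $\supp\chi_Q\subset Q$, the Mikado mean-value estimate from \eqref{eq:propertieofmikados}, and $|g_j^Q|\leq\|R_0\|_{L^\infty(\tor)}$, then sum over $Q$ and $j$. Your explicit remark that periodicity identifies the $\lambda$-scaled integral with the unscaled one is a useful clarification that the paper leaves implicit.
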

\begin{proof}
We have with $\supp(\chi_Q)\subset Q,$ $|\psi_Q\chi_Q|\leq 1$ and using the last line of \eqref{eq:propertieofmikados}  
\begin{align*}
 \left\|\chi_Q|g_j^Q|^\frac{1}{p'}\left\lbrack \int_{\tor}\beta\left(|g_j^Q|^\frac{1}{p}\left(\Theta^{Q,j}\right)_\lambda\right)\left(W^{Q,j}\right)_\lambda\, dx - (-1)^{j+1}|g_j^Q|^\frac{1}{p}z_j\right\rbrack\right\|_{L^1(\tor)}\leq \zeta \|\mathbb{1}_Q|g_j^Q|^\frac{1}{p'}\|_{L^1(Q)}.
\end{align*}
Hence, we obtain with \eqref{eq:gjqinlinf}
\begin{align*}
\|R^{2,\operatorname{mean}}\|_{L^1(\tor)}&\leq \zeta\jQsum\|\mathbb{1}_Q|g_j^Q|^\frac{1}{p'}\|_{L^1(Q)} = \zeta\jQsum |g_j^Q|^\frac{1}{p'}\varepsilon^d\\
&\leq  2d \zeta\|R_0\|_{L^\infty(\tor)}^\frac{1}{p'}.
\end{align*}
\end{proof}

\begin{lemma}\label{lemma:r2chi}
We have
$$\|R^\chi\|_{L^1(\tor)}\leq \sum_{j=1}^{2d}\big\|(\sum_{Q\in\mathcal{Q}}\mathbb{1}_Qg_j^Q)-g_j\big\|_{L^1(\tor)} + 2d\|R_0\|_{L^\infty(\tor)}(1-\alpha^d) .$$
\end{lemma}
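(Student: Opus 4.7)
The plan is to prove Lemma \ref{lemma:r2chi} by adapting the argument used for Lemma \ref{lemma:rpsichi}, since $R^\chi$ has exactly the same algebraic structure as $R^{\chi,\psi}$ except with $\chi_Q$ in place of the product $\psi_Q \chi_Q$. The key observation that makes the bound identical is that from the definition of the cutoffs in Section \ref{ss:cutoffs}, $\chi_Q(x) = 1$ automatically forces $\psi_Q(x) = 1$ (the threshold $\operatorname{dist}(x,\partial Q) > \alpha\e$ is stricter than $\operatorname{dist}(x,\partial Q) > \tfrac{\alpha}{2}\e$), so the ``bad set'' where the cutoffs fail to equal $1$ is the same in both situations.

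First I would insert a telescoping decomposition. Write
\begin{align*}
R^\chi \;=\; \jQsum (-1)^{j+1}\chi_Q (g_j^Q-g_j)\, z_j \;+\; \sum_{j=1}^{2d} (-1)^{j+1}\Big[\Big(\sum_{Q\in\mathcal{Q}}\chi_Q\Big) - 1\Big] g_j\, z_j,
\end{align*}
which is valid because each $\chi_Q$ is supported in $Q$, so $\sum_Q \chi_Q g_j = \sum_Q \chi_Q (\mathbb{1}_Q g_j)$, and the second sum telescopes against $\sum_j (-1)^{j+1} g_j z_j$.

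Next I would estimate the two pieces separately. For the first piece, using $|\chi_Q|\le 1$, $\operatorname{supp}(\chi_Q)\subset Q$, and the fact that $z_j \in \{e_1,\dots,e_d\}$ has norm $1$, I get
\[
\Big\|\jQsum (-1)^{j+1}\chi_Q(g_j^Q - g_j)z_j\Big\|_{L^1(\tor)} \le \sum_{j=1}^{2d}\sum_{Q\in\mathcal{Q}}\|g_j^Q - g_j\|_{L^1(Q)} = \sum_{j=1}^{2d}\Big\|\sum_{Q\in\mathcal{Q}}\mathbb{1}_Q g_j^Q - g_j\Big\|_{L^1(\tor)}.
\]
For the second piece, define the bad set $D:= \bigcup_{Q\in\mathcal{Q}}\{x\in Q : \chi_Q(x)\neq 1\}$. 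The integrand vanishes outside $D$, and since $|g_j(x)| \leq |R_0(x)| \leq \|R_0\|_{L^\infty(\tor)}$ by Lemma \ref{lemma:decomposition}, this piece is bounded in $L^1$ by $2d\,\|R_0\|_{L^\infty(\tor)}\,\operatorname{vol}(D)$.

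Finally, the volume estimate $\operatorname{vol}(D) \le 1-\alpha^d$ follows from the same elementary computation used implicitly in Lemma \ref{lemma:rpsichi}: since $\chi_Q = 1$ implies $\psi_Q = 1$, the set $\{x\in Q : \chi_Q \neq 1\}$ coincides with $\{x\in Q : \psi_Q\chi_Q \neq 1\}$, so the volume bound transfers verbatim. No step here is a serious obstacle; the only thing to be careful about is the sign factors $(-1)^{j+1}$ — they are harmless because the $L^1$ norm ignores them, but one must check that the decomposition above is genuinely telescoping despite the alternating signs, which it is since the same $(-1)^{j+1}$ multiplies both the $Q$-indexed sum and the reference term $g_j z_j$.
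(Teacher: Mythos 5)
Your proposal is correct and matches the paper's argument exactly: the paper's proof of Lemma \ref{lemma:r2chi} consists of the single remark that $R^\chi$ ``can be estimated exactly as $R^{\chi,\psi}$,'' and your telescoping decomposition, $L^1$ bound via $|\chi_Q|\le 1$ and $\operatorname{supp}\chi_Q\subset Q$, and bad-set volume estimate are precisely the unpacking of that remark (with the observation that $\{\chi_Q\neq 1\}=\{\psi_Q\chi_Q\neq 1\}$ and that the $(-1)^{j+1}$ factors are absorbed by the $L^1$ norm).
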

\begin{proof}
This can be estimated exactly as $R^{\chi,\psi}$.
\end{proof}

Finally we deal with the remaining term in the above calculation for $\beta(\nu)w - R_0^2$. Recall that by \eqref{cutoffs} the gradient $\nabla\chi_Q$ is only nonzero if $\psi_Q=1,$ while $\nabla\psi_Q$ is only nonzero if $\chi_Q=0.$ Using $\operatorname{div}(W^{Q,j}) = \operatorname{div}(\Theta^{Q,j} W^{Q,j}) = 0,$ we have
\begin{align*}
&\operatorname{div}\left(\jQsum \chi_Q\sign(g_j^Q)|g_j^Q|^\frac{1}{p'}\left\lbrack\beta\left(\psi_Q|g_j^Q|^\frac{1}{p}\left(\Theta^{Q,j}\right)_\lambda\right)\left(W^{Q,j}\right)_\lambda\right.\right.\\
&\hspace{0,3cm}\left.\left.-\int_{\tor} \beta(|g_j^Q|^\frac{1}{p}\left(\Theta^{Q,j}\right)_\lambda)\left(W^{Q,j}\right)_\lambda\, dx\right\rbrack\right)\\
&=\jQsum \sign(g_j^Q)|g_j^Q|^\frac{1}{p'}\nabla\chi_Q\cdot\left\lbrack\beta\left(\underbrace{\psi_Q}_{=1}|g_j^Q|^\frac{1}{p}\left(\Theta^{Q,j}\right)_\lambda\right)\left(W^{Q,j}\right)_\lambda\right.\\
&\hspace{0,3cm}\left.-\int_{\tor} \beta\left(|g_j^Q|^\frac{1}{p}\left(\Theta^{Q,j}\right)_\lambda\right)\left(W^{Q,j}\right)_\lambda\, dx\right\rbrack\\
&\hspace{0,2cm} + \jQsum \underbrace{\chi_Q}_{=0}\sign(g_j^Q)|g_j^Q|^\frac{1}{p'}\left\lbrack\beta'\left(\psi_Q|g_j^Q|^\frac{1}{p}\left(\Theta^{Q,j}\right)_\lambda\right)|g_j^Q|^\frac{1}{p}\left(\Theta^{Q,j}\right)_\lambda\nabla\psi_Q\cdot\left(W^{Q,j}\right)_\lambda\right\rbrack\\
&=\jQsum \sign(g_j^Q)|g_j^Q|^\frac{1}{p'}\nabla\chi_Q\cdot\left\lbrack\beta\left(|g_j^Q|^\frac{1}{p}\left(\Theta^{Q,j}\right)_\lambda\right)\left(W^{Q,j}\right)_\lambda\right.
\\&\hspace{0,3cm}\left.-\int_{\tor} \beta(|g_j^Q|^\frac{1}{p}\left(\Theta^{Q,j}\right)_\lambda)\left(W^{Q,j}\right)_\lambda\, dx\right\rbrack.
\end{align*}
Hence, we can define
\begin{align*}
R^{2,\operatorname{quad}}=\jQsum\sign(g_j^Q)|g_j^Q|^\frac{1}{p'}\mathcal{R}\left(\nabla\chi_Q\cdot \left\lbrack\beta\left(|g_j^Q|^\frac{1}{p}\left(\Theta^{Q,j}\right)_\lambda\right)\left(W^{Q,j}\right)_\lambda\right.\right.\\
\left.\left.-\int_{\tor} \beta\left(|g_j^Q|^\frac{1}{p}\left(\Theta^{Q,j}\right)_\lambda\right)\left(W^{Q,j}\right)_\lambda\, dx\right\rbrack\right).
\end{align*} 
Notice that the definition is well posed, in the sense that the operator $\mathcal{R}$ is applied, for each fixed $(Q,j)$ in the summation, to the product $\nabla \chi_Q (F^{Q,j})_\lambda$ of the ``slow oscillating'' function $\nabla \chi_Q$ and the ``fast oscillating'' function (with zero mean value) $(F^{Q,j})_\lambda$ where 
\begin{equation*}
F^{Q,j} : \T^d \to \R^d, \qquad F^{Q,j}(x) = \beta \Big(|g_j^Q|^{ \frac{1}{p}  } \Theta^{Q,j}(x) \Big) W^{Q,j}(x) - \int_{\T^d} \beta \Big(|g_j^Q|^{1/p} \Theta^{Q,j}(x) \Big) W^{Q,j}(x)  dx.
\end{equation*}

With that definition, we set
\begin{align*}
R^2_1 =-\left( R^{2,\operatorname{quad}} + R^{2,\operatorname{mean}} + R^{\chi} +  R^{2,\operatorname{lin}} + R^{2,\operatorname{corr}} + R^h\right),
\end{align*}
which satisfies
\begin{equation*}
-\dv R^2_1 = \dv(\beta(\rho_1)u_1-h^*).
\end{equation*}
\begin{lemma}\label{lemma:r2quad}
It holds
\begin{equation*}
\|R^{2,\operatorname{quad}}\|_{L^1(\tor)}\leq \frac{C(\varepsilon,\alpha,\|R_0\|_{L^1(\tor)})}{\lambda}.
\end{equation*}
\end{lemma}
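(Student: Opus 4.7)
The argument runs exactly parallel to the proof of Lemma \ref{lemma:1quad}. The plan is to apply the antidivergence operator of Lemma \ref{lemma:antidiv} to each summand in the definition of $R^{2,\operatorname{quad}}$, with the ``slow oscillating'' factor $\nabla\chi_Q$ and the ``fast oscillating, zero mean'' factor
\begin{equation*}
F^{Q,j}(x) = \beta\!\left(|g_j^Q|^{\frac{1}{p}}\Theta^{Q,j}(x)\right)W^{Q,j}(x) - \int_{\tor} \beta\!\left(|g_j^Q|^{\frac{1}{p}}\Theta^{Q,j}\right)W^{Q,j}\,dx,
\end{equation*}
which by construction satisfies $\int_{\tor}F^{Q,j}\,dx = 0$. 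This produces the desired $\lambda^{-1}$ gain and, with $k=0$, $p=1$ in Lemma \ref{lemma:antidiv}, yields
\begin{equation*}
\|\mathcal{R}(\nabla\chi_Q \cdot (F^{Q,j})_\lambda)\|_{L^1(\tor)} \leq \frac{C}{\lambda}\|\nabla\chi_Q\|_{C^2(\tor)}\|F^{Q,j}\|_{L^1(\tor)}.
\end{equation*}

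The only genuinely new ingredient is a bound on $\|F^{Q,j}\|_{L^1(\tor)}$; here I plan to use the global Lipschitz continuity of $\beta$ combined with the normalization $\beta(0)=0$ (fixed at the beginning of Section \ref{sec:main-prop}), which gives the pointwise bound
\begin{equation*}
\left|\beta\!\left(|g_j^Q|^{\frac{1}{p}}\Theta^{Q,j}\right)\right| \leq \operatorname{Lip}(\beta)\,|g_j^Q|^{\frac{1}{p}}\,|\Theta^{Q,j}|.
\end{equation*}
Combined with $\|F^{Q,j}\|_{L^1} \leq 2\,\|\beta(|g_j^Q|^{1/p}\Theta^{Q,j})W^{Q,j}\|_{L^1}$ and \eqref{eq:thetawproduct}, this yields
\begin{equation*}
\|F^{Q,j}\|_{L^1(\tor)} \leq 2\,\operatorname{Lip}(\beta)\,M_0^2\,|g_j^Q|^{\frac{1}{p}}.
\end{equation*}

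Putting the two estimates together, the prefactor $|g_j^Q|^{1/p'}$ appearing in the definition of $R^{2,\operatorname{quad}}$ combines with the bound above to produce a factor $|g_j^Q|^{1/p'}\cdot|g_j^Q|^{1/p}=|g_j^Q|$, so that
\begin{equation*}
\|R^{2,\operatorname{quad}}\|_{L^1(\tor)} \leq \frac{C\operatorname{Lip}(\beta)M_0^2}{\lambda}\sum_{j=1}^{2d}\sum_{Q\in\mathcal{Q}}|g_j^Q|\,\|\nabla\chi_Q\|_{C^2(\tor)}.
\end{equation*}
Finally, using \eqref{eq:gjqsumme} in the form $\e^d\sum_{Q}|g_j^Q|=\sum_Q\|\mathbb{1}_Qg_j^Q\|_{L^1(Q)}\leq \|R_0\|_{L^1(\tor)}$, together with the uniform bound $\|\nabla\chi_Q\|_{C^2(\tor)}\leq C(\e,\alpha)$ coming from \eqref{eq:cutoffest}, I absorb the $\e$- and $\alpha$-dependence into the constant and conclude the claimed estimate. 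There is no substantial obstacle here; the only place one has to be mildly careful is the passage from the absolute value estimate on $\beta(|g_j^Q|^{1/p}\Theta^{Q,j})$ to the $L^1$-norm of the product with $W^{Q,j}$, which is handled cleanly by \eqref{eq:thetawproduct}.
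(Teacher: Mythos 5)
Your proof is correct and follows essentially the same route as the paper: apply the antidivergence operator of Lemma \ref{lemma:antidiv} with $k=0$, $p=1$ to each term, bound $\|F^{Q,j}\|_{L^1}$ via the global Lipschitz continuity of $\beta$ (with $\beta(0)=0$) and \eqref{eq:thetawproduct}, and absorb the remaining $g_j^Q$- and cutoff-dependent factors into $C(\varepsilon,\alpha,\|R_0\|_{L^1(\tor)})$. If anything, your bookkeeping of the factor $|g_j^Q|^{1/p'}\cdot|g_j^Q|^{1/p}=|g_j^Q|$ followed by \eqref{eq:gjqsumme} is a touch more transparent than the paper's, which silently absorbs $|g_j^Q|^{1/p}$ into the constant; the end result is the same.
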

\begin{proof}
For the map $F^{Q,j}$ introduced above, we have, using the Lipschitz continuity of $\beta$, 
\begin{equation*}
\begin{aligned}
\|F^{Q,j}\|_{L^1(\T^d)} 
& \leq 2 \left\| \beta \Big(|g_j^Q|^{ \frac{1}{p} } \Theta^{Q,j}(x) \Big) W^{Q,j}(x) \right\|_{L^1(\T^d)} \\
& \leq 2 \Lip(\beta) |g_j^Q|^{\frac{1}{p}} \|\Theta^{Q,j} W^{Q,j}\|_{L^1(\T^d)}  \\
\text{by \eqref{eq:thetawproduct} } 
& \leq  2 \Lip(\beta) M_0^2.
\end{aligned}
\end{equation*}
As for $R^{1,\operatorname{quad}}$, we then have
\begin{equation*}
\begin{aligned}
\|R^{2, \operatorname{quad}}\|_{L^1(\T^d)}
& \leq \frac{1}{\lambda} \jQsum |g_j^Q|^{\frac{1}{p'}} \|\chi_Q\|_{C^2} \left\| F^{Q,j} \right\|_{L^1(\T^d)} \leq \frac{C(\varepsilon,\alpha,\|R_0\|_{L^1(\tor)})}{\lambda}.
\end{aligned}
\end{equation*}
\end{proof}

\section{Proof of the Proposition}\label{sec:proofofall}
In this section we conclude the proof of Proposition \ref{prop:main} by fixing the parameters introduced at the beginning of Section \ref{sec:perturbations} and showing the estimates \eqref{est:rho} -- \eqref{est:R}. Note that \eqref{eq:equalitiesofR1} is satisfied by construction.
We start with estimate \eqref{est:R} and recall that by definition of $R_1^1:$ 
\begin{align}\label{eq:r1inl1bysums}
\|R_1^1\|_{L^1(\tor)}& \leq \|R^{1,\operatorname{quad}}\|_{L^1(\tor)} + \|R^{1,\operatorname{mean}}\|_{L^1(\tor)} + \|R^{\chi,\psi}\|_{L^1(\tor)} +  \|R^{1,\operatorname{lin}}\|_{L^1(\tor)} + \|R^{1,\operatorname{corr}}\|_{L^1(\tor)}.
\end{align}
We choose $\varepsilon$ so small such  that for each $j\in \lbrace 1,\dots, 2d\rbrace,$ the mean values $(g_j^Q)_{Q\in\mathcal{Q}}\subset\mathbb{R}$ defined by $g_j^Q =\frac{1}{|Q|}\int_{Q}g_j(x) \, dx$ satisfy
\begin{align*}
\big\|g_j-\sum_{Q\in\mathcal{Q}}\mathbb{1}_Qg_j^Q\big\|_{L^1(\tor)}<\frac{\delta}{16d}.
\end{align*}
Furthermore, we let $\alpha\in (0,1)$ be such that 
\begin{align*}
\|R_0\|_{L^\infty(\tor)}(1-\alpha^d)\leq\frac{\delta}{16d}.
\end{align*}
With $\varepsilon$ and $\alpha$ fixed, the estimate from Lemma \ref{lemma:rpsichi} becomes
\begin{align*}
\|R^{\chi,\psi}\|_{L^1(\tor)}&\leq \sum_{j=1}^{2d}\big\|(\sum_{Q\in\mathcal{Q}}\mathbb{1}_Qg_j^Q)-g_j\big\|_{L^1(\tor)} + 2d\|R_0\|_{L^\infty(\tor)}(1-\alpha^d)\\
&\leq\frac{\delta}{8} + \frac{\delta}{8} = \frac{\delta}{4},
\end{align*}
and for the same reason we also have (for the term $R^\chi$ in the definition of the second row $R^2_1$ of the new error)
$$\|R^\chi\|_{L^1(\tor)}\leq \frac{\delta}{4}.$$
Next, we choose $\zeta>0$ such that 
$$\zeta\|R_0\|_{L^1(\tor)}\leq \frac{\delta}{8d} \text{ and } \zeta\|R_0\|_{L^\infty(\tor)}^\frac{1}{p'}\leq \frac{\delta}{8d}. $$
With that choice, the estimates from Lemma \ref{lemma:r1mean} and \ref{lemma:r2mean}  become
\begin{align*}
\|R^{1,\operatorname{mean}}\|_{L^1(\tor)}\leq 2d \zeta\|R_0\|_{L^1(\tor)}&\leq \frac{\delta}{4},\\
\|R^{2,\operatorname{mean}}\|_{L^1(\tor)}\leq 2d \zeta\|R_0\|_{L^\infty(\tor)}^\frac{1}{p'}&\leq \frac{\delta}{4}.
\end{align*}
Now, putting together Lemmas \ref{lemma:1quad}, \ref{lemma:r1mean}, \ref{lemma:rpsichi}, \ref{lemma:r1lin} and \ref{lemma:r1corr}, we obtain from \eqref{eq:r1inl1bysums} by our choice of $\varepsilon, \alpha$ and $\zeta$ that

\begin{align*}
\|R_1^1\|_{L^1(\tor)}& \leq \|R^{1,\operatorname{quad}}\|_{L^1(\tor)} + \|R^{1,\operatorname{mean}}\|_{L^1(\tor)} + \|R^{\chi,\psi}\|_{L^1(\tor)} +  \|R^{1,\operatorname{lin}}\|_{L^1(\tor)} + \|R^{1,\operatorname{corr}}\|_{L^1(\tor)}\\
&\leq \frac{\delta}{2} + C(\varepsilon,\alpha, \|R_0\|_{L^1(\tor)}, \|\rho_0\|_{C^0(\tor)},\|u_0\|_{C^0(\tor)})\left(\frac{1}{\lambda} + \mu^{-\min(\frac{d-1}{p},\frac{d-1}{p'})}\right).
\end{align*}
Similarly, we use Lemma \ref{lemma:r2quad}, \ref{lemma:r2mean}, \ref{lemma:r2chi}, \ref{lemma:r2lin}, \ref{lemma:r2corr} and \ref{lemma:rh} and obtain

\begin{align*}
\|R^2_1\|_{L^1(\tor)} &\leq \|R^{2,\operatorname{quad}}\|_{L^1(\tor)} + \|R^{2,\operatorname{mean}}\|_{L^1(\tor)} + \|R^{\chi}\|_{L^1(\tor)} +  \|R^{2,\operatorname{lin}}\|_{L^1(\tor)}\\
&\hspace{0,3cm} + \|R^{2,\operatorname{corr}}\|_{L^1(\tor)} + \|R^h\|_{L^1(\tor)}\\
&\leq\frac{3\delta}{4} + C(\varepsilon,\alpha, \|R_0\|_{L^1(\tor)}, \|\rho_0\|_{C^0(\tor)},\|u_0\|_{C^0(\tor)})\left(\frac{1}{\lambda} + \mu^{-\min(\frac{d-1}{p},\frac{d-1}{p'})}\right).
\end{align*}
We set
$$\mu = \lambda^c$$
and show that $\lambda$ and $c>1$ can be chosen large enough such that the estimates \eqref{est:rho} -- \eqref{est:R} are satisfied. Since $\varepsilon$ and $\alpha$ have already been fixed and do not depend on $\lambda,$ we can choose $\lambda$ large such that 
\begin{equation*}
\|R^1_1\|_{L^1(\tor)}, \|R^2_1\|_{L^1(\tor)}\leq\delta,
\end{equation*}
which shows estimate \eqref{est:R}.
For \eqref{est:rho}, we use Lemma \ref{lemma:nulp} and \ref{lemma:nucl1} and choose $\lambda$ large enough such that
\begin{align*}
\|\rho_1 - \rho_0\|_{L^p(\tor)}\leq \|\nu\|_{L^p(\tor)} + |\nu_c|\leq (3d)^\frac{1}{p} M_0 \|R_0\|_{L^1(\tor)}^\frac{1}{p}.
\end{align*}
Analogously, we use for \eqref{est:ulp} Lemma \ref{lemma:winlp'} and \ref{lemma:wcinlp'} and obtain for $\lambda$ large enough
\begin{align*}
\|u_1 - u_0\|_{L^{p'}(\tor)}\leq (3d)^\frac{1}{p'} M_0 \|R_0\|_{L^1(\tor)}^\frac{1}{p'}.
\end{align*}
Hence, \eqref{est:rho} and \eqref{est:ulp} are satisfied with $M$ as defined in \eqref{eq:constant-m}. For estimate \eqref{est:uw}, we use Lemma \ref{lemma:winwptilde} and Lemma \ref{lemma:wcinwptilde} and get
\begin{align*}
\|u_1 - u_0\|_{W^{1,\tilde{p}}(\tor)}\leq C(\varepsilon,\alpha,\zeta,\|R_0\|_{L^1(\tor)})\lambda\mu^{-\gamma}\leq \delta
\end{align*}
if we fix $c>\frac{1}{\gamma}$ and choose $\lambda$ large enough.

\section{The Hamiltonian Case}
\label{sec:hamiltonian}

This last section is devoted to give a sketch of the proof of Theorem \ref{thm:hamiltonian}. Let $d = 2d' \geq 4$ be even. We can modify our building blocks such that the vector field perturbations in each iteration step and also the limit function $u$ are Hamiltonian, i.e. $u = J\nabla H$ for some function $H:\tor\rightarrow\mathbb{R}\in W^{1,p'}(\tor)\cap W^{2,\tilde{p}}(\tor)$ with
\begin{equation*}
J = \begin{pmatrix}
0_{d'} & I_{d'}\\
-I_{d'} & 0_{d'}
\end{pmatrix}.
\end{equation*}
To achieve this, we modify the proof of Lemma \ref{lemma:construction1} and Proposition \ref{prop:defofmikados}. 

\begin{prop}
Let $\zeta>0,$ $\varepsilon>0$ with $\frac{1}{\varepsilon}\in\mathbb{N}$ and let $\mathcal{Q}$ be a partition of $\tor$ into cubes of side length $\varepsilon.$ Let $(g^Q_j)_{Q\in\mathcal{Q}, j\in\lbrace 1,\dots, 2d\rbrace}$ be defined as in \eqref{eq:meanvaluecubes} and let $\mu_0$ be defined by \eqref{eq:growthconditionagain}. Let $\mu \geq \mu_0$.  For every pair $(Q,j)\in\mathcal{Q}\times\lbrace 1,\dots, 2d\rbrace$ we have a Mikado Density
$\Theta^{Q,j}\in C^\infty(\tor)$ and  a Mikado Field  $W^{Q,j}\in C^\infty(\tor;\mathbb{R}^d)$ satisfying the following properties:
\begin{equation}\label{eq:hamiltonianproperties}
\begin{cases}
\dv W^{Q,j} &= 0,\\
\dv \Theta^{Q,j}W^{Q,j} &= 0,\\
\int_{\mathbb{T}^{d}}W^{Q,j}\, dx & = 0,\\
\left|\int_{\mathbb{T}^{d}}\beta(|g_j^Q|^\frac{1}{p}\Theta^{Q,j})W^{Q,j}\, dx -(-1)^{j+1} |g_j^Q|^\frac{1}{p}z_j\right| &<\zeta
\end{cases}
\end{equation}
and in case of $g_j^Q\neq 0$ also
\begin{equation}\label{eq:meanvaluepropertyhamiltonian}
\left|\int_{\mathbb{T}^{d}}\Theta^{Q,j}W^{Q,j}\, dx - z_j \right|< \zeta.
\end{equation}
We have the additional property 
$$W^{Q,j} = J\nabla H^{Q,j}$$ for some function $H^{Q,j}\in C^\infty(\tor).$ The estimates \eqref{eq:thetainlr} -- \eqref{eq:dwlptilde} are valid for some constant $M_2$ independent of $Q,j,\mu,\zeta$ and $\varepsilon$, together with the additional estimate for $H^{Q,j}$:
\begin{equation}\label{est:hamiltonian}
\|H^{Q,j}\|_{L^r(\tor)}\leq M_2\mu^{\frac{d-1}{p'}-1+m-\frac{d-1}{r}}.
\end{equation}
Also,  we have that $\operatorname{supp}(\Theta^{Q,j})=\operatorname{supp}(W^{Q,j})$ and $\Theta^{Q,j}, W^{Q,i}$ only have non-disjoint support if $i=j.$
\end{prop}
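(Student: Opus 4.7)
The plan is to adapt the construction of Lemma \ref{lemma:construction1} and Proposition \ref{prop:defofmikados} so that each Mikado field $W^{Q,j}$ admits a scalar potential, $W^{Q,j}=J\nabla H^{Q,j}$. Since $J$ is antisymmetric, any such field is automatically divergence-free, so the first line of \eqref{eq:hamiltonianproperties} is free of charge. The Mikado density $\Theta^{Q,j}$ is kept close to the original one (concentrated in a transverse cross-section and constant along $e_k$), while $W^{Q,j}$ is redesigned to be the symplectic gradient of a concentrated potential.

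Following the spirit of \cite{giri2021non}, we exploit the symplectic structure: for each $k\in\{1,\dots,d\}$, the direction $e_k$ has a unique partner $e_{k^{\ast}}$, where $k^\ast=k+d'$ if $k\leq d'$ and $k^\ast=k-d'$ otherwise, so that the $k$-th component of $J\nabla H$ equals $\pm\partial_{k^\ast}H$. For $(Q,j)$ with $z_j=e_k$ we choose $H^{Q,j}$ of product form, roughly $H^{Q,j}(x)=E(x_{k^\ast})B(x_\perp)$, where $x_\perp$ denotes the remaining $d-2$ coordinates, $E$ is an antiderivative of a concentrated mean-zero $1$-periodic function on $\T^1$, and $B$ is a scalar potential on a $(d-2)$-dimensional cross-section playing the role of $\tilde{\Psi}$ in Lemma \ref{lemma:construction1}. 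Correspondingly, $\Theta^{Q,j}$ is defined in product form $\Theta^{Q,j}(x)=F(x_{k^\ast})G(x_\perp)$, with $F,G$ chosen so that $F\cdot E'$ and $G\cdot B$ reproduce the normalization of Lemma \ref{lemma:construction1}, and so that the Poisson bracket $\{H^{Q,j},\Theta^{Q,j}\}$ vanishes identically; this last condition gives the second line of \eqref{eq:hamiltonianproperties}. The mean-value identities \eqref{eq:hamiltonianproperties}$_4$ and \eqref{eq:meanvaluepropertyhamiltonian} are then verified by repeating the two-parameter linear system \eqref{eq:alpha} with the sign parameter $\sigma=(-1)^{j+1}$, now encoded in the choice of $F,G$ rather than in $\tilde\varphi$.

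The main obstacle, as already anticipated in the paper, is that a Hamiltonian vector field on $\T^d$ cannot be strictly directed along $e_k$ with concentrated profile in all $d-1$ transverse coordinates: doing so would force $H^{Q,j}$ to depend on one coordinate only, losing the required rate of concentration. We therefore allow $W^{Q,j}$ to carry components in the directions $e_{j^\ast}$ for $j\neq k,k^\ast$ (arising from the transverse derivatives $\partial_j H^{Q,j}$); these are derivatives of a compactly supported function in the transverse variables, so they have zero mean on the cross-section and, paired with the product-form $\Theta^{Q,j}$, do not contribute to the integral $\int_{\T^d}\Theta^{Q,j}W^{Q,j}\,dx$. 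The same periodicity argument shows $\int_{\T^d} W^{Q,j}\,dx=0$ is automatic for any periodic $H^{Q,j}$, giving the third line of \eqref{eq:hamiltonianproperties}. The support condition $\supp(\Theta^{Q,j})=\supp(W^{Q,j})$ is preserved because both objects inherit the support of the product $F(x_{k^\ast})G(x_\perp)$ and its derivatives.

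Finally, the scaling estimates \eqref{eq:thetainlr}--\eqref{eq:dwlptilde} are obtained by the same computations as in Lemma \ref{lemma:construction1}, since $E$, $B$, $F$, $G$ obey the same concentration scaling \eqref{scaling} after rescaling by $\mu$. The new bound \eqref{est:hamiltonian} comes directly from the relation $W^{Q,j}=J\nabla H^{Q,j}$: each derivative produces a factor $\mu$, so that $\|D^m H^{Q,j}\|_{L^r(\tor)}\lesssim \mu^{m-1}\|W^{Q,j}\|_{L^r(\tor)}\lesssim \mu^{\frac{d-1}{p'}-1+m-\frac{d-1}{r}}$, matching the claim. The transfer of the whole convex integration scheme of Sections \ref{sec:perturbations-main-section}--\ref{sec:proofofall} then yields Theorem \ref{thm:hamiltonian}, with the vector field perturbation $w$ in \eqref{eq:perturbations-2} becoming $J\nabla \big(\sum_{j,Q}\chi_Q\sign(g_j^Q)|g_j^Q|^{1/p'}\lambda^{-1}H^{Q,j}(\lambda\,\cdot)\big)$ up to a negligible corrector, preserving the Hamiltonian structure in the limit $u=J\nabla H$.
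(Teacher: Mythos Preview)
Your outline diverges from the paper's construction at the key technical point, and the divergence is not cosmetic. You propose a product ansatz
\[
H^{Q,j}(x)=E(x_{k^\ast})B(x_\perp),\qquad \Theta^{Q,j}(x)=F(x_{k^\ast})G(x_\perp),
\]
and assert that $F,G,E,B$ can be chosen so that (i) the Poisson bracket $\{H^{Q,j},\Theta^{Q,j}\}$ vanishes and (ii) the normalization identities of Lemma~\ref{lemma:construction1} are reproduced by solving the linear system \eqref{eq:alpha}, ``now encoded in the choice of $F,G$''. Neither claim is justified, and in fact they pull in opposite directions. For the product forms above, the Poisson bracket in each transverse symplectic pair $(x_j,x_{j^\ast})$ reads
\[
\{H,\Theta\}\big|_{(j,j^\ast)} = EF\,(\partial_j B\,\partial_{j^\ast}G-\partial_{j^\ast}B\,\partial_j G),
\]
which vanishes only under a functional relation such as $G=\phi(B)$. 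On the other hand, moving the free parameters $\alpha_1,\alpha_2$ to the \emph{density} side destroys the linearity of system \eqref{eq:alpha}: the $\beta$-condition becomes
\(
\beta(|a|^{1/p}\alpha_1)\!\int_{B_1}\tilde\varphi+\beta(|a|^{1/p}\alpha_2)\!\int_{B_2}\tilde\varphi=\sigma|a|^{1/p},
\)
which is nonlinear in $(\alpha_1,\alpha_2)$ and no longer yields the uniform bound $|\alpha|\leq C(1+1/D)$ that produces the $a$-independent constant $M_0$. Even keeping the freedom on the field side, the nonlinear integral $\int\beta(|a|^{1/p}FG)\,E'B$ does not factor through $F\!\cdot\!E'$ and $G\!\cdot\!B$, so your reduction to Lemma~\ref{lemma:construction1} does not go through as stated.

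The paper avoids all of this by a different device: it keeps the density profile $\Phi$ exactly as in Lemma~\ref{lemma:construction1} (on $\mathbb{R}^{d-1}$, supported on balls of radius $1/16$) and replaces the field profile $\tilde\varphi$ by a smooth function $\tilde\Phi$ on $\mathbb{R}^{d-1}$ which is \emph{affine in the first coordinate}, $\tilde\Phi(x)=\alpha_i x_1$ on the enlarged balls $B_{1/8}(P_i)\supset\supp\Phi$. Setting $H^{Q,j}(x)=\pm\mu^{(d-1)/p'-1}\tilde\Phi(\mu\bar x)$ (with $\bar x$ the permutation putting $x_{\bar j}=x_{k^\ast}$ first) gives $\nabla H^{Q,j}\parallel e_{\bar j}$ on $\supp\Theta^{Q,j}$, hence $W^{Q,j}=J\nabla H^{Q,j}$ is \emph{purely along $e_k$ and equal to the non-Hamiltonian Mikado field} on $\supp\Theta^{Q,j}$. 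This single observation delivers at once $\dv(\Theta^{Q,j}W^{Q,j})=0$, the two mean-value identities (which reduce verbatim to Lemma~\ref{lemma:construction1}, so \eqref{eq:alpha} applies with the freedom on the field side and the constant stays uniform), and the support equality. Your product ansatz does not provide such a reduction; to make the Hamiltonian construction work you need this ``linear-on-the-inner-balls'' idea, or an equivalent mechanism that forces $W^{Q,j}$ to coincide with a unidirectional field on $\supp\Theta^{Q,j}$.
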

\begin{proof}
Let $(Q,j)\in\mathcal{Q}\times\lbrace 1,\dots, 2d\rbrace$ and $\mu\geq\mu_0$ be fixed. Similar to the proof of Lemma \ref{lemma:construction1}, let $P_1=(\frac{3}{4},\dots,\frac{3}{4}), P_2 = (\frac{1}{4},\frac{3}{4},\dots,\frac{3}{4}), P_3 =(\frac{3}{4},\frac{1}{4},\frac{3}{4},\dots,\frac{3}{4})\in\mathbb{R}^{d-1}.$ We first define $\varphi,\tilde{\varphi}:\mathbb{R}^{d-1}\rightarrow\mathbb{R}$ by
\begin{equation*}
\varphi = \chi_{B_{\frac{1}{16}}(P_1)} - \chi_{B_{\frac{1}{16}}(P_2)}
\end{equation*}
and
\begin{equation}
\tilde{\varphi} = \alpha_1\chi_{B_{\frac{1}{8}}(P_1)}+\alpha_2\chi_{B_{\frac{1}{8}}(P_2)} + \alpha_3 \chi_{B_{\frac{1}{8}}(P_3)}
\end{equation}
with $\alpha=(\alpha_1, \alpha_2,\alpha_3)\in\mathbb{R}^3$  which can be chosen (depending on $g_j^Q$ and $\mu$) such that the associated concentrated functions
\begin{align*}
\psi(x) := \mu^{\frac{d-1}{p}}\varphi(\mu x) \text{ and}\\
\tilde{\psi}(x) := \mu^{\frac{d-1}{p'}}\tilde{\varphi}(\mu x)
\end{align*}
satisfy
\begin{align}
\int_{\mathbb{R}^{d-1}}\psi\tilde \psi\, dx &= 1,\\
\int_{\mathbb{R}^{d-1}}\beta(|g_j^Q|^\frac{1}{p}\psi)\tilde \psi\, dx &=(-1)^{j+1} |g_j^Q|^\frac{1}{p}\text{ and}\\
\int_{\mathbb{R}^{d-1}}\tilde \psi \, dx&= 0.
\end{align}
One can show just as in Lemma \ref{lemma:construction1} that there is a constant $\tilde{M}$ independent of $Q,j,\mu,\zeta$ and $\varepsilon$ such that $|\alpha|\leq \tilde{M}$ and therefore also $|\varphi,\tilde{\varphi}|\leq \tilde{M}.$ Again we  set $\Phi := \eta_{\ell}\ast \varphi$ with $\ell = \ell(\varepsilon,\zeta, \|R_0\|_{L^1(\tor)})$ so small such that
\begin{align*}
\|\Phi-\varphi\|_{L^p(\mathbb{R}^{d-1})}&<  \frac{\zeta}{2\tilde{M}}\text{ and}\\
|g_j^Q|^\frac{1}{p'}\|\Phi-\varphi\|_{L^p(\mathbb{R}^{d-1})}&\leq \frac{\|R_0\|_{L^1(\tor)}}{\varepsilon^d}\|\Phi-\varphi\|_{L^p(\mathbb{R}^{d-1})}<\frac{\zeta}{2\tilde{M}\operatorname{Lip}(\beta)}
\end{align*}
%and denote by $\Phi^{\varepsilon_1}_\mu, \tilde{\Phi}^{\varepsilon_1,\alpha(\mu,Q)}_\mu$ the concentrated functions
%\begin{align*}
%\Phi^{\varepsilon_1}_\mu(x) = \mu^{\frac{d-1}{p}}\Phi^{\varepsilon_1}(\mu x),\\
%\tilde{\Phi}^{\varepsilon_1,\alpha(\mu,Q)}_\mu(x)=\mu^{\frac{d-1}{p'}}\tilde{\Phi}^{\varepsilon_1,\alpha(\mu,Q)}(\mu x).
%\end{align*}
and such that $$\supp(\Phi)\subseteq B_{\frac{1}{8}}(P_1)\cup B_{\frac{1}{8}}(P_2).$$  We denote again by $\Psi$ the concentrated function $$\Psi(x) := \mu^{\frac{d-1}{p}}\Phi(\mu x).$$
Let $\tilde{\Phi}\in C^\infty(\mathbb{R}^{d-1})$ be a function with 
\begin{align*}
\supp(\tilde{\Phi})\subseteq B_{\frac{1}{4}}(P_1)\cup B_{\frac{1}{4}}(P_2) \cup B_{\frac{1}{4}}(P_3), \hspace{0,3cm} \int_{\mathbb{R}^{d-1}}\tilde{\Phi} \, dx = 0
\end{align*}
and
\begin{align*}
 \tilde{\Phi}(x) = \alpha_1x_1  \text{ on } B_{\frac{1}{8}}(P_1), \tilde{\Phi}(x) = \alpha_2x_1 \text{ on } B_{\frac{1}{8}}(P_2)\text{ and } \tilde{\Phi}(x) = \alpha_3x_1 \text{ on } B_{\frac{1}{8}}(P_3). 
\end{align*}
We can also assume that $|\tilde{\Phi}|\leq \tilde{M}$ and $\|\nabla\tilde{\Phi}\|\leq 2\tilde{M}.$
Let $\tilde{\Psi}$ be the associated concentrated function given by
$$\tilde{\Psi}= \mu^{\frac{d-1}{p'}-1}\tilde{\Phi}(\mu x).$$ 
Let $k\in \lbrace 1,\dots, d\rbrace$ such that $z_j=e_k$ and  $\bar{j}\in\lbrace 1,\dots, d\rbrace$ such that $J e_{\bar{j}} = \pm z_j.$  We can now define $\Theta^{Q,j}, H^{Q,j}$ as the 1-periodic extension of the functions
\begin{align*}
\tilde{\Theta}^{Q,j}(x) &= \Psi(x_{\bar{j}}, x_1, \dots,x_{k-1},x_{k+1},\dots,x_{\bar{j}-1}, x_{\bar{j}+1},\dots,x_d),\\
\tilde{H}^{Q,j}(x) &= \pm\tilde{\Psi}(x_{\bar{j}}, x_1, \dots,x_{k-1},x_{k+1},\dots,x_{\bar{j}-1}, x_{\bar{j}+1},\dots,x_d).
\end{align*}
We define $W^{Q,j}$ accordingly as 
$$W^{Q,j} = J\nabla H^{Q,j}.$$
 For $x=(x_1,\dots, x_d)$ we abbreviate $\bar{x} = (x_{\bar{j}}, x_1, \dots,x_{k-1},x_{k+1},\dots,x_{\bar{j}-1}, x_{\bar{j}+1},\dots,x_d).$ Notice that for $x\in[0,1]^d$ with $\bar{x}\in B_{\frac{1}{8\mu}}(P_i/\mu),$ $i=1,2,3,$ we have
 \begin{align*}
 H^{Q,j}(x) = \pm\mu^{\frac{d-1}{p'}-1}\tilde{\Phi}(\mu \bar{x}) =\pm \mu^{\frac{d-1}{p}}\alpha_ix_{\bar{j}}
\end{align*}  
 and therefore also
\begin{align*}
 W^{Q,j}(x) = J\nabla H^{Q,j}(x) = \tilde \psi(\bar{x})z_j \text{ on } \supp(\Theta^{Q,j}).
\end{align*}
 %Notice that on $\supp(\tilde{\varphi})$ we have $\nabla\tilde{\Phi} = \tilde{\varphi}e_1$ and therefore also
%$$W^{Q,j}_{\mu}(x) = \tilde \psi(\bar{x})z_j \text{ for } x\in\supp(\Theta^{Q,j}).$$
% on $\supp(\Theta^{Q,j})$ we have $W^{Q,j}_{\mu}(x) = \tilde \psi(x_{\bar{j}}, x_1, \dots,x_{k-1},x_{k+1},\dots,x_{\bar{j}-1}, x_{\bar{j}+1},\dots,x_d)z_j.
With this definition, the first three properties in \eqref{eq:hamiltonianproperties} are easily verified. The estimates \eqref{eq:thetainlr} -- \eqref{eq:dwlptilde} together with the estimate \eqref{est:hamiltonian} for $H^{Q,j}$ are shown as in Lemma \ref{lemma:construction1} with $M_2 = 2\tilde{M}$ because of $|\Phi,\tilde{\Phi}|,\|\nabla\tilde{\Phi}\|_{L^\infty(\mathbb{R}^{d-1})}\leq 2\tilde{M}.$ We check \eqref{eq:meanvaluepropertyhamiltonian}. If $g_j^Q\neq 0,$ we have
\begin{align*}
\left|\int_{\tor}\Theta^{Q,j} W^{Q,j}\, dx  - z_j\right| &= \left|\int_{\tor}\Theta^{Q,j} W^{Q,j} - \psi(\bar{x})\tilde \psi(\bar{x}) z_j\, dx\right|\\
 &=\left| \int_{\tor}\Psi(\bar{x})\tilde \psi(\bar{x})z_j - \psi(\bar{x})\tilde \psi(\bar{x}) z_j\, dx\right|\\
 &\leq\|\Psi-\psi\|_{L^p(\mathbb{R}^{d-1})}\|\tilde \psi\|_{L^p(\mathbb{R}^{d-1})}\\
 & = \|\Phi-\varphi\|_{L^p(\mathbb{R}^{d-1})}\|\tilde \psi\|_{L^p(\mathbb{R}^{d-1})}\\
 &\leq \tilde{M}\|\tilde \psi\|_{L^p(\mathbb{R}^{d-1})} <\zeta.
\end{align*}
The fourth property in \eqref{eq:hamiltonianproperties} is shown similarly. Finally, as before, we translate $\Theta^{Q,j}, W^{Q,j}$ in order to get  $\Theta^{Q,j}, W^{Q,j}$ such that  $\Theta^{j,k}, W^{Q,i}$ only have non-disjoint support if $j=i.$
\end{proof}
With those Mikado Densities and Mikado flows, the perturbations $\nu$ and $w$ are defined as in Section \ref{sec:perturbations}.   Additionally, we redefine the corrector $w_c$ by setting
\begin{align*}
w_c =\jQsum \sign(g_j^Q)|g_j^Q|^\frac{1}{p'} (J\nabla\chi_Q)\frac{\left(H^{Q,j}\right)_\lambda}{\lambda}.
\end{align*}
With this definition, we have
\begin{align*}
w+w_c = \jQsum\frac{\sign(g_j^Q)|g_j^Q|^\frac{1}{p'}}{\lambda} J\nabla\left(\chi_Q\left(H^{Q,j}\right)_\lambda\right),
\end{align*}
i.e., the vector field perturbations are Hamiltonian. With  \eqref{eq:gjqbetrag}, \eqref{eq:thetainlr}, \eqref{eq:cutoffest} and  \eqref{est:hamiltonian}, one easily sees that
\begin{align*}
\|w_c\|_{L^{p'}(\tor)}&\leq\jQsum \frac{|g_j^Q|^\frac{1}{p'}}{\lambda}\|J\nabla\chi_Q\|_{C^0(\tor)}\|H^{Q,j}\|_{L^{p'}(\tor)} \\
&\leq C(\varepsilon,\alpha,\|R_0\|_{L^1(\tor)})(\lambda\mu)^{-1}
\end{align*}
and
\begin{align*}
\|Dw_c\|_{L^{\tilde{p}}(\tor)}&\leq \jQsum |g_j^Q|^\frac{1}{p'}\left[\frac{1}{\lambda}\|J\nabla\chi_Q\|_{C^1(\tor)}\|H^{Q,j}\|_{L^{\tilde{p}}(\tor)} + \|J\nabla\chi_Q\|_{C^0(\tor)} \underbrace{\|DH^{Q,j}\|_{L^{\tilde{p}}(\tor)}}_{\approx\|W^{Q,j}\|_{L^{\tilde{p}}(\tor)}}\right]\\
&\leq C(\varepsilon,\alpha,\|R_0\|_{L^1(\tor)})\mu^{-(\gamma+1)},
\end{align*}
i.e. $$\|w_c\|_{W^{1,\tilde{p}}(\tor)}\leq C(\varepsilon,\alpha,\|R_0\|_{L^1(\tor)})\mu^{-(\gamma+1)}.$$ Therefore, Proposition \ref{prop:main} can be proven just as we did in Section \ref{sec:proofofall} and the limit function in the proof of Theorem \ref{thm:main} is Hamiltonian.

\nocite{*}
\bibliographystyle{alpha}
\bibliography{bibliography}

\end{document}